\newtheorem{prop}{Proposition}[section]
\newtheorem{conj}[prop]{Conjecture}
\newtheorem{lem}[prop]{Lemma}
\theoremstyle{definition}
\def\id{\mathop{\mathrm{id}}\nolimits}
\newcommand{\ord}{{\mathrm {ord}}}
\newcommand{\Hom}{{\mathrm {Hom}}}
\newcommand{\Ind}{{\mathrm {Ind}}}
\newcommand{\Fil}{{\mathrm {Fil}}}
\newcommand{\tr}{{\mathrm {tr}}}
\newcommand{\Tr}{{\mathrm {Tr}}}
\newcommand{\Sym}{{\mathrm {Sym}}}
\newcommand{\st}{{\mathrm {st}}}
\newcommand{\dR}{{\mathrm {dR}}}
\newcommand{\op}{{\mathrm {op}}}
\newcommand{\Frob}{{\mathrm {Frob}}}
\def\rank{\mathop{\mathrm{ rank}}\nolimits}
\newcommand{\Gal}{\mathrm {Gal}}
\newcommand{\ad}{{\mathrm {ad}}}
\newcommand{\diag}{\mathrm{diag}}
\newcommand{\wt}{{\mathrm {wt}}}
\newcommand{\A}{{\mathbb A}}
\newcommand{\CC}{{\mathbb C}}
\newcommand{\C}{{\mathbb C}}
\newcommand{\RR}{{\mathbb R}}
\newcommand{\R}{{\mathbb R}}
\newcommand{\QQ}{{\mathbb Q}}
\newcommand{\Q}{{\mathbb Q}}
\newcommand{\ZZ}{{\mathbb Z}}
\newcommand{\MMM}{{\mathcal M}}
\newcommand{\SSS}{{\mathcal S}}
\newcommand{\h}{{\mathcal H}}
\newcommand{\VVV}{{\mathbb V}}
\newcommand{\g}{{\mathfrak g}}
\newcommand{\n}{{\mathfrak n}}
\newcommand{\hh}{{\mathfrak h}}
\newcommand{\q}{{\mathfrak q}}
\newcommand{\mm}{{\mathfrak m}}
\newcommand{\FF}{{\mathbb F}}
\newcommand{\GL}{\mathrm {GL}}
\newcommand{\SL}{\mathrm {SL}}
\newcommand{\Sp}{\mathrm {Sp}}
\newcommand{\SO}{\mathrm {SO}}
\newcommand{\GSp}{\mathrm {GSp}}
\newcommand{\PGSp}{\mathrm {PGSp}}
\newcommand{\Qbar}{\overline{\mathbb Q}}
\newcommand{\rhobar}{\overline{\rho}}
\newcommand{\rhobart}{\overline{\tilde{\rho}}}
\newcommand{\rhot}{\tilde{\rho}}
\begin{document}
\title{Eisenstein congruences for split reductive groups}
\author{Jonas Bergstr\"om}
\author{Neil Dummigan}
\date{October 13th, 2015.}
\subjclass[2010]{11F33,11F46,11F67,11F75}
\keywords{Congruences of modular forms, Harder's conjecture, Bloch-Kato conjecture}
\address{Matematiska institutionen\\ Stockholms universitet\\ 106 91 Stockholm\\Sweden.}
\email{jonasb@math.su.se}
\address{University of Sheffield\\ School of Mathematics and Statistics\\
Hicks Building\\ Hounsfield Road\\ Sheffield, S3 7RH\\
U.K.}
\email{n.p.dummigan@shef.ac.uk}

\begin{abstract}
We present a general conjecture on congruences between Hecke eigenvalues of parabolically induced and cuspidal automorphic representations of split reductive groups, modulo divisors of critical values of certain $L$-functions. We examine the consequences in several special cases, and use the Bloch-Kato conjecture to further motivate a belief in the congruences.
\end{abstract}

\maketitle

\section{Introduction}
Ramanujan discovered the congruence $\tau(p)\equiv 1+p^{11}\pmod{691}$, for all primes $p$, where $\Delta=\sum_{n=1}^{\infty}\tau(n)q^n=q\prod_{n=1}^{\infty}(1-q^n)^{24}$. We may view this as being a congruence between Hecke eigenvalues, for $T(p)$ acting on the cusp form $\Delta$ of weight~$12$ for $\SL_2(\ZZ)$, and on the Eisenstein series $E_{12}$ of weight~$12$. The modulus $691$ comes from a certain $L$-function evaluated at a critical point depending on the weight; specifically it divides the numerator of the rational number $\frac{\zeta(12)}{\pi^{12}}$. Conjecture \ref{main} in this paper is a very wide generalisation of Ramanujan's congruence, to congruences between Hecke eigenvalues of automorphic representations of $G(\A)$, where $\A$ is the adele ring and $G/\Q$ is any connected, reductive group. (Here we look only at the case that $G$ is split, but we shall return to the non-split case elsewhere.) On one side of the congruence is a cuspidal automorphic representation $\tilde{\Pi}$ of $G$. On the other is one induced from a cuspidal automorphic representation $\Pi$ of the Levi subgroup $M$ of a maximal parabolic subgroup $P$.
The modulus of the congruence comes from a critical value of a certain $L$-function, associated to $\Pi$ and to the adjoint representation of the $L$-group $\hat{M}$ on the Lie algebra $\hat{\n}$ of the unipotent radical of the maximal parabolic subgroup $\hat{P}$ of $\hat{G}$. Starting from $\Pi$, we conjecture the existence of $\tilde{\Pi}$, satisfying the congruence. Ramanujan's congruence is an instance of the case $G=\GL_2, M=\GL_1\times\GL_1$.

For an odd prime $q$, and even $k$ such that $2\leq k\leq q-3$ and with $q$ dividing $\zeta(k)/\pi^k,$ Ribet exploited a congruence of this type (still $G=\GL_2, M=\GL_1\times\GL_1$) to construct an element of order $q$ in the class group of the cyclotomic field $\QQ(\xi_q)$, more precisely in the $\chi^{1-k}$-eigenspace for the action of $\Gal(\QQ(\xi_{q})/\QQ)$, where $\chi$ is the cyclotomic character \cite{Ri}. The Hecke eigenvalues for a cusp form $f$ are traces of Frobenius for the $2$-dimensional $\q$-adic Galois representation. The congruence can be interpreted as a reducibility of this Galois representation modulo $\q$, with $1$-dimensional composition factors $\id$ and $\chi^{1-k}$ in a non-split extension which gives the element of the class group. (For technical reasons, Ribet replaced modular forms of weight $k$ and level $1$ by modular forms of weight $2$ and level $q$, with non-trivial character.) This element may be thought of as belonging to a Bloch-Kato Selmer group associated to $\zeta(k)$, confirming a prediction of the Bloch-Kato conjecture on special values of $L$-functions, given that  $q$ divides $\zeta(k)/\pi^k$. In fact, when Bloch and Kato \cite{BK} proved most of their conjecture in the case of the Riemann zeta function, the main ingredient was the Mazur-Wiles theorem \cite{MW} (Iwasawa's main conjecture), whose proof was a further development of Ribet's idea. In \S 14, we try to motivate the conjectured congruence, and in particular to justify the specific choice of $L$-value from which the modulus is extracted, by generalising Ribet's construction, to link the congruence to the Bloch-Kato conjecture. Though we cannot actually prove much, the adjoint action of $\hat{M}$ on $\hat{\n}$ appears in a plausible manner.

Beyond the case $G=\GL_2$ (and closely related congruences of Hilbert modular forms), maybe the next to be studied was that of $G=\GSp_2$, with $P$ the Klingen parabolic, i.e. congruences between genus-$2$ Klingen-Eisenstein series and cusp forms. The first example, found by Kurokawa \cite{Ku}, was a congruence mod $71^2$ between the Hecke eigenvalues of two scalar-valued Siegel modular forms of genus-$2$ and weight $20$, one a cusp form, the other the Klingen-Eisenstein series of the unique normalised genus-$1$ cusp form $\Delta_{20}$ of level $1$ and weight $20$. The modulus comes from the rightmost critical value $L(\Sym^2 \Delta_{20},38)$. For the genus-$2$ Hecke operator traditionally called $T(p)$, the Hecke eigenvalue for the Klingen-Eisenstein series, i.e. the right-hand-side of the congruence, is $a_p(\Delta_{20})(1+p^{k-2})$, where $k=20$ and $\Delta_{20}=\sum_{n=1}^{\infty}a_p(\Delta_{20})q^n$.

Congruences of this type, but for $\Lambda$-adic forms, where $\Lambda$ is a two-variable Iwasawa algebra, and the modulus is a $q$-adic adjoint $L$-function, were proved by Urban \cite{U2}, and used by him to prove that that $q$-adic $L$-function divides the characteristic ideal to which the main conjecture says that it should be equal \cite{U1}. Skinner and Urban have similarly used the non-split case $G=GU(2,2)$, $P$ a Klingen parabolic, in their work on the main conjecture for $\GL_2$, see \cite{SU}. Both these works use an adaptation of Ribet's construction.

The case $G=\GSp_2$, $P$ the Siegel parabolic, arises out of work of Harder \cite[3.1]{H3}, and the first computational evidence was observed by him \cite{H1}, using computations of Hecke eigenvalues by Faber and van der Geer \cite{FvdG, vdG}. Their method involves computing the zeta function of curves whose Jacobians make up the mod $p$ points of $\mathcal A_2$, the moduli space of principally polarized abelian surfaces, which is a Siegel modular threefold. In their paper they computed Hecke eigenvalues for $p\leq 37$, for weights $j$, $k$ such that the space of genus-2 cusp forms is $1$-dimensional, assuming a conjecture on the endoscopic contribution to the cohomology of local systems on $\mathcal A_2$. This conjecture has been proven by Petersen \cite{P} (see also Weissauer \cite{We}), building on research of many people on the automorphic representations of $\GSp_2$. Here, the right hand side of the congruence is, for $T(p)$, of the form $a_p(f)+p^{k-2}+p^{j+k-1}$, where $f$ is a genus-$1$ cusp form of weight $j+2k-2$, and the modulus comes from the critical value $L(f,j+k)$.
The genus-$2$ cusp form whose Hecke eigenvalues appear on the left-hand-side is vector-valued of weight $\Sym^j\otimes\det^k$.

Harder himself generalised his conjecture to the case $G=\GSp_3$, $M\simeq \GL_2\times\GL_2$ \cite{H5}. Here the right-hand-side involves two genus-$1$ cusp forms, and there are two cases depending on the relative sizes of their weights.
Computations by the first named author, Faber and van der Geer, using the same techniques as mentioned above but for the Siegel modular six-fold $\mathcal A_3$, gave (assuming a conjecture on the endoscopic contribution to the cohomology of local systems on $\mathcal A_3$) Hecke eigenvalues for genus-3 vector-valued Siegel modular forms for $p \leq 17$, and aided by $L$-value approximations by Mellit, they produced numerical evidence for these conjectures \cite{BFvdG1}. They also found some apparent congruences in the case $G=\GSp_3$, $M\simeq \GL_1\times\GSp_2$, but did not put forward a guess for what $L$-value the modulus comes from.

In \S 2 we introduce some notation and basic facts on reductive groups, characters and cocharacters, automorphic representations, Satake parameters and infinitesimal characters. In \S 3 we look at the $L$-functions mentioned above, connected with the adjoint action of $\hat{M}$ on $\hat{\n}$. The first contribution of this paper is the discussion, towards the end of \S 3, of the relation between criticality of values of these $L$-functions and dominance of certain characters related to induced representations, explaining what might otherwise seem like a strange coincidence. In \S 4, after introducing what we need on the Bloch-Kato conjecture, we state the main conjecture on congruences.

In \S\S 5,6 and 7, we examine the cases $G=\GL_2, M\simeq \GL_1\times\GL_1$, $G=\GSp_2$, $P$ the Klingen parabolic, and $G=\GSp_2$, $P$ the Siegel parabolic, respectively. Hopefully it is evident already in these cases how efficiently our presentation leads, in a unified way, to the explicit determination of the right-hand-sides of congruences, which $L$-values the moduli come from, and the ``weights'' of the various objects involved. Another special feature is that, via the Bloch-Kato conjecture in the case of values of $L$-functions with missing Euler factors, we find a natural home for Harder's congruences ``of local origin'' \cite{H2}, and present examples in the new case of $G=\GSp_2$, $P$ the Klingen parabolic.

For $G=\GSp_3$ there are three maximal parabolics (up to conjugacy). In \S 8 we consider the case $M\simeq \GL_2\times\GL_2$, recovering the conjecture of Harder mentioned above. In \S 10 we look at $M\simeq \GL_1\times\GL_3$, for which we have no computational evidence. More interesting perhaps is the remaining case $M\simeq \GL_1\times\GSp_2$, covered in \S 9. Here we recover the conjectural congruences of which the first-named author, Faber and van der Geer found examples, and effortlessly arrive at critical values of a genus-$2$ standard $L$-function as the source of the modulus for the congruence. Showing, in special cases, that the primes, for which they found congruences, really do occur in the standard $L$-values, calls on earlier work which the second-named author never expected to lead anywhere further \cite{Du3,DIK}. This is connected with a quite different construction of elements in Bloch-Kato Selmer groups, related to the ``visibility'' construction of Cremona and Mazur \cite{CM}.

The spinor $L$-function of a genus-$2$ cusp form is involved in the case $G=\SO(4,3)$, $M\simeq \GL_1\times\SO(3,2)$, which we do not mention again in this paper, but return to in \cite{BDM}.

In \S\S 11 and 12, we consider the two conjugacy classes of maximal parabolics for $G$ the Chevalley group of type $G_2$, expecting that something interesting and testable might happen, since in both cases $M\simeq \GL_2$. What we observe, using the conjectural Gross-Savin functorial lift from $G_2$ to $\GSp_3$, is a remarkable consistency with the earlier cases involving $\GSp_3$ (and also $\GSp_2$). In the last part of \S 12, we also find something new. Suppose that $a\geq b\geq c>0$, with $c$ even and $a=b+c$. If $f$ is a genus-$1$ cuspidal Hecke eigenform of weight $a+b+6$ then we expect a congruence involving a genus-$3$ cuspidal Hecke eigenform of weight $(a-b,b-c,c+4)$, with right-hand-side $(1+p^{c+1})(a_p(f)+p^{b+2}+p^{a+3})$, and modulus coming from the critical value $L(f,a+c+5)$. Using Hecke eigenvalues calculated as in \cite{BFvdG1}, we checked numerical evidence for such a congruence in the case $(a,b,c)=(10,8,2)$. The condition $a=b+c$ is necessary for the congruence to come from $G=G_2$ via the Gross-Savin lift, but we noticed that it appears to work even without that condition, and found sixteen more examples, for which $a\neq b+c$.

The induced representations we consider depend on a parameter $s>0$ which is typically confined to $\ZZ$ or to $\frac{1}{2}+\ZZ$ (and is often bounded above too). We have to exclude $s=\frac{1}{2}$ or $1$ from the scope of Conjecture \ref{main}. But congruences of Hecke eigenvalues between CAP (cuspidal associated to a parabolic) and non-CAP cuspidal automorphic representations sometimes appear as a substitute in the case $s=\frac{1}{2}$ or $1$. See the remarks on Saito-Kurokawa lifts in \S 7 and Ikeda-Miyawaki lifts in \S 8. It seems possible that we can include
$s=\frac{1}{2}$ or $1$ at the expense of enlarging the set of ramified primes, see the remarks at the end of \S 5, and again in \S 7.

An important feature in the work of Harder is the cohomology of local systems on arithmetic quotients of locally symmetric spaces, and in \S 13 we work out the precise relationship between our way of arriving at conjectural congruences and Harder's. Another key aspect of his approach is the occurrence of the $L$-value in the denominator of a constant term of a generalised Eisenstein series (by a theorem of Langlands/Gindikin-Karpelevich \cite[Theorems 5.3,6.7]{Ki}). This too seems to be important for a fuller understanding, and affects the precise formulation and scope of the conjecture. As he has pointed out, the periods we divide by to normalise $L$-values are motivic in nature, whereas the periods he divides by to normalise the ratios of consecutive $L$-values appearing in constant terms are topological in nature.

In \S 14 we indicate how from a congruence of the type considered here, the existence of a non-zero element in a Bloch-Kato Selmer group ought to follow (though our argument is far from a proof). Then, according to the Bloch-Kato conjecture, we should find the modulus dividing the appropriate normalised $L$-value. Much of the numerical evidence we give or refer to goes in this direction, in that we look for congruences first, then having found them, confirm the divisibility of the $L$-value, as ``predicted'' by the Bloch-Kato conjecture. However, our Conjecture \ref{main} as presented goes in the other direction, predicting that given divisibility of an $L$-value, a congruence should follow. In other words, when the Bloch-Kato conjecture (applied to the $L$-values we look at here) predicts the existence of a non-zero element in a Selmer group, this element should be constructible from a congruence. One might ask what justification we have for such a conjecture, with this direction of implication.

First, in those few cases where anything is actually proved, e.g. for $G=\GL_2$, $M=\GL_1\times\GL_1$, or in cases for $G=\GSp_2$ and $P$ the Klingen parabolic, one starts from divisibility of an $L$-value and proves a congruence. When $G=\GSp_2$ and $P$ is the Siegel parabolic (Harder's conjecture), van der Geer looked at all level $1$ examples where the relevant spaces of cusp forms of genus $1$ and genus $2$ are $1$-dimensional. In all cases where a large enough prime divided the normalised $L$-value he {\em then} verified the expected congruence for $p\leq 37$ \cite[\S 27]{vdG}. In \S 12, where $G=G_2$, in the example with a congruence for $q=179$, one of us found this divisor of the $L$-value first, predicting a congruence which was then verified for $p\leq 17$ by the other one. In the non-split case $G=U(2,2)$ with $P$ the Siegel parabolic, the second-named author calculated an $L$-value first, finding divisors for $q=19$ and $q=37$, {\em then} computed some Hecke eigenvalues which turned out to be consistent with the expected congruences \cite{Du4}. In another non-split case, $G=U(2,1)$, the first-named author found apparent congruences for $q=53$ and $q=271$, using Hecke eigenvalues computed by him and van der Geer, {\em after} Harder had predicted them on the basis of $L$-value computations (see \cite{Du4}).

The authors met at the Max Planck Institute in Bonn in February 2010, and are grateful for the opportunity so provided. There they also attended a seminar by G. Harder, and participated in valuable discussions with him, continued on later occasions. They benefitted also from his comments on an earlier version of this paper. We were directed to \cite{Ki} and \cite{BG} by G. Harder and T. Berger, respectively.

\section{Induced representations}
For basic notions on reductive groups and automorphic representations, see \cite{Sp,BJ}, and associated articles in the same volume. Another useful reference is \cite{Ki}.

Let $G/\QQ$ be a connected, reductive algebraic group. In this paper we shall assume that $G$ is split, so it has a maximal torus $T\simeq (\GL_1)^r$ over $\QQ$. Let $X^*(T)=\Hom(T,\GL_1)$ and $X_*(T)=\Hom(\GL_1,T)$ be the character and cocharacter groups, respectively, of $T$. There is a natural pairing $\langle,\rangle:X^*(T)\times X_*(T)\rightarrow \ZZ$. Let $W_G=W=N_G(T)/T$ be the Weyl group. Let $\Phi\subset X^*(T)$ be the set of roots, $\Phi^+=\Phi^+_G$ the set of positive roots (with respect to a fixed ordering), and $\Delta_G$ the set of simple positive roots. Let $\rho_G$ be half the sum of all the positive roots. Given any root $\alpha$, there is an associated coroot $\check{\alpha}\in X_*(T)$, with $\langle \alpha,\check{\alpha}\rangle=2$. If $\langle,\rangle'$ is any $W$-invariant inner product on $X^*(T/S)\otimes\RR$, where $S=Z(G)^0$ is the connected component of the identity in the centre of $G$, then for any root $\alpha$, and any $\chi\in X^*(T/S)$, we have $\langle\chi,\check{\alpha}\rangle=\langle\chi,\frac{2\alpha}{\langle\alpha,\alpha\rangle'}\rangle'$. Identifying $\check{\alpha}$ with $\frac{2\alpha}{\langle\alpha,\alpha\rangle'}$ we get an isomorphism $X_*(T/S)\otimes\RR\simeq X^*(T/S)\otimes\RR$, so from now on we write $\langle,\rangle'$ as $\langle,\rangle$. Let $B$ be the Borel subgroup (minimal parabolic) of $G$ corresponding to $\Phi^+_G$.

If we choose any $\alpha\in\Delta_G$ then there is a maximal parabolic subgroup $P=MN$ of $G$, with unipotent radical $N$ and (reductive) Levi subgroup $M$, characterised by $\Delta_M=\Delta_G-\{\alpha\}$. The roots in $\Phi$ are those non-trivial characters of $T$ arising from its adjoint action on the Lie algebra $\g$ of the algebraic group $G$. Let $\Phi_N$ be the subset occurring in the Lie algebra $\n$ of $N$, i.e. those elements of $\Phi^+_G$ whose decomposition as a sum of simple roots includes $\alpha$, and let $\rho_P$ be half the sum of the elements of $\Phi_N$. Let $\tilde{\alpha}:=\frac{\rho_P}{\langle\rho_P,\check{\alpha}\rangle}$. Then $\langle\tilde{\alpha},\check{\alpha}\rangle=1$, while $\langle\tilde{\alpha},\check{\beta}\rangle=0$ for all other simple positive roots $\beta$ (as can be seen by considering the action of $W_M$), i.e. $\tilde{\alpha}$ is a fundamental dominant weight in $X^*(T/S)$.

Let $\hat{G}$ be the Langlands dual group of $G$ \cite[Chapter 3]{Ki}, \cite[I.2]{Bo}. Then $\hat{G}$ has a maximal torus $\hat{T}$ with $X^*(\hat{T})\simeq X_*(T)$ and $X_*(\hat{T})\simeq X^*(T)$. Under these isomorphisms, roots of $\hat{G}$ become coroots of $G$, and coroots of $\hat{G}$ become roots of $G$, with $\check{\Delta}:=\{\check{\beta}:\beta\in\Delta_G\}$ mapping to a set of simple positive roots for $\hat{G}$. We can define a maximal parabolic subgroup $\hat{P}$ of $\hat{G}$, with Levi subgroup characterised by having set of simple positive roots $\check{\Delta}-\{\check{\alpha}\}$, hence identifiable with $\hat{M}$. Let $\hat{N}$ be the unipotent radical of $\hat{P}$, with Lie algebra $\hat{\n}$.

Letting $A:=Z(M)^0$, the restriction map from $X^*(M)$ (i.e. $\Hom(M,\GL_1)$) to $X^*(A)$ identifies $X^*(M)$ with a finite-index subgroup of $X^*(A)$, thus $X^*(M)\otimes\RR=X^*(A)\otimes\RR$. If $\chi\in X^*(M)$ then we can define, for any archimedean or non-archimedean place $v$ of $\Q$, a homomorphism $|\chi|_v:M(\Q_v)\rightarrow \RR^{\times}$ by $|\chi|_v(m)=|\chi(m)|_v$. We can extend this to $X^*(M)\otimes\RR$, or even $X^*(M)\otimes\CC$, by $|s\chi|_v(m)=|\chi(m)|_v^s$ in $\CC^{\times}$. For a finite prime $p$, $|\cdot|_p$ is normalised so that $|p|_p=p^{-1}$. Let $\A$ be the adele ring of $\Q$, and let $G(\A)$ be the group of points of the $\QQ$-algebraic group $G$ in the $\QQ$-algebra $\A$. Taking a product over all places, we may define, for any $\chi\in X^*(M)\otimes\RR$, a homomorphism $|\chi|:M(\A)\rightarrow \RR^{\times}$. In particular, restricting $2\rho_P$ to $A$ then viewing it in $X^*(A)\otimes\RR=X^*(M)\otimes\RR$, we have $|s\tilde{\alpha}|:M(\A)\rightarrow\CC^{\times}$, for any $s\in\CC$. Note that this character is trivial when restricted to $S(\A)$.

Let $\Pi$ be an irreducible, cuspidal, automorphic representation of $M(\A)$. We shall assume in addition that $\Pi$ is unitary, and that it is trivial on $A(\A)$. (This latter assumption is purely for simplicity. Without it, we could, for instance, in the case $M=\GL_1\times\GL_1$ in \S 5 below, let $\Pi=\psi_1\times\psi_2$ for two Dirichlet characters, and $\zeta(s)$ would be replaced by the Dirichlet $L$-function $L(\psi_1\psi_2^{-1},s)$.) We have $\Pi=\otimes_v \Pi_v$, where each $\Pi_v$ is an irreducible, admissible representation of $M(\Q_v)$, unramified for all but finitely many $v$. Then $\Pi\otimes|s\tilde{\alpha}|$ is a representation of $M(\A)$, trivial on $S(\A)$. We may parabolically induce it to a representation $\Ind_P^G(\Pi\otimes|s\tilde{\alpha}|)$ of $G(\A)$, trivial on $S(\A)$. This induction is as described in \cite[Chapter 4]{Ki}. It involves the addition of $\rho_P$ to $s\tilde{\alpha}$, with the consequence that $\Ind_P^G(\Pi\otimes|s\tilde{\alpha}|)$ would be unitary if $s\in i\RR$, though we shall always take $s\in\RR_{>0}$.

The admissibility of $\Pi_{\infty}$ follows from it being unitary and irreducible, by a theorem of Harish-Chandra \cite[Theorem 2.3]{Da}. Then, by \cite[Proposition 5.19]{Kn} or \cite[\S 3]{Da}, the centre $Z(\mm_{\CC})$ of the universal enveloping algebra $U(\mm_{\CC})$ (where $\mm_{\CC}$ is the complexification of the Lie algebra of $M(\RR)$) acts by a character (the ``infinitesimal character''), on the dense subspace of $K_{\infty}$-finite vectors, where $K_{\infty}$ is a maximal compact subgroup of $M(\R)$. Given any Cartan subalgebra $\hh_{\CC}$ of $\mm_{\CC}$, the Harish-Chandra isomorphism from $Z(\mm_{\CC})$ to $U(\hh_{\CC})^{W(\hh_{\CC})}$ (invariants under the Weyl group) allows us to write the infinitesimal character in the form $\chi_{\lambda}$ for some $\lambda\in\hh_{\CC}^*$, determined only up to the Weyl group action. See \cite[Theorem 5.62]{Kn} or \cite[\S 3]{Da}. In discussions of discrete series representations, ``compact'' Cartan subalgebras are most directly relevant, but all Cartan subalgebras of $\mm_{\CC}$ are conjugate by $M(\CC)$ \cite[Theorem 2.15]{Kn}, and it is convenient to take $\hh$ to be the Lie algebra of $T(\RR)$ (and $\hh_{\CC}=\hh\otimes_{\RR}\CC$), so we may identify $\lambda$ (and by abuse of notation $\chi_{\lambda}$) with an element of $X^*(T)\otimes\CC$ (which for us will always be in $X^*(T)\otimes\RR$). If $\Pi_{\infty}$ has infinitesimal character $\lambda$ (up to the action of $W_M$), then $\Ind_P^G(\Pi_{\infty}\otimes|s\tilde{\alpha}|_{\infty})$ (i.e. the $\RR$-component of $\Ind_P^G(\Pi\otimes|s\tilde{\alpha}|)$), though not in general unitary, has an infinitesimal character $\lambda+s\tilde{\alpha}$, now only determined up to the action of $W_G$. Applying an element of $W_M$ if necessary, we may arrange for $\lambda$ to be dominant with respect to $\Delta_M$, i.e. $\langle \lambda,\check{\beta}\rangle\geq 0$ for all $\beta\in\Delta_M$. This follows from \cite[Theorem 2.63, Proposition 2.67]{Kn}. However, $\lambda$ might not be dominant for $\Delta_G$: it might not be the case that $\langle \lambda,\check{\alpha}\rangle\geq 0$, but similarly there exists some $w\in W_G$ such that $w(\lambda)$ is dominant for $\Delta_G$. Note that the finite-dimensional representation of $G$ with highest weight $\lambda$ has infinitesimal character $\lambda+\rho_G$.

\begin{lem}\label{unitary} Let $\lambda\in X^*(T)\otimes\RR$ be the infinitesimal character of a unitary irreducible representation of $M(\RR)$, and suppose that $\lambda$ is chosen in its $W_M$-orbit to be dominant. Then $\lambda=-w_0^M\lambda$, where $w_0^M\in W_M$ is the long element, exchanging positive and negative roots.
\end{lem}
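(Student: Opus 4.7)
The plan is to compare two computations of the infinitesimal character of the contragredient representation $\pi^\vee$, using unitarity as the bridge. Let $(\pi,V)$ be the given unitary irreducible representation of $M(\RR)$, with infinitesimal character parametrized by $\lambda$, taken dominant in its $W_M$-orbit. A $\pi$-invariant Hermitian form on $V$ supplies an antilinear $M(\RR)$-equivariant map $V \to V^\vee$, equivalently a $\CC$-linear isomorphism $\overline{\pi} \simeq \pi^\vee$ between the complex conjugate representation and the contragredient. Isomorphic representations share an infinitesimal character, so it remains to compute each side and compare.

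On the conjugate side, the Harish-Chandra isomorphism $Z(\mm_\CC) \cong U(\hh_\CC)^{W_M}$ is compatible with complex conjugation against the real forms $\mm$ and $\hh$; thus $\overline{\pi}$ has infinitesimal character parametrized by $\overline{\lambda}$, which equals $\lambda$ since $\lambda \in X^*(T) \otimes \RR$ is real. On the contragredient side, $\pi^\vee$ is $\pi$ composed with the principal anti-involution $X \mapsto -X$ of $\mm_\CC$, extended to $U(\mm_\CC)$; this restricts on the center $Z(\mm_\CC)$ to an automorphism corresponding under Harish-Chandra to $\lambda \mapsto -\lambda$ on $\hh_\CC^*$. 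The quickest sanity check is the finite-dimensional case: $V_\mu^\vee = V_{-w_0^M \mu}$ has infinitesimal character $-w_0^M \mu + \rho_M$, and the identity $-w_0^M(\mu + \rho_M) = -w_0^M \mu + \rho_M$ (using $w_0^M \rho_M = -\rho_M$) confirms that contragredient negates the Harish-Chandra parameter modulo $W_M$. Hence $\pi^\vee$ has infinitesimal character parametrized by the orbit $W_M \cdot (-\lambda)$.

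Combining these, the $W_M$-orbits of $\lambda$ and of $-\lambda$ coincide. Because $w_0^M$ exchanges positive and negative roots of $M$, the composition $-w_0^M$ preserves the dominant chamber for $\Delta_M$, so $-w_0^M \lambda$ is the unique dominant element of $W_M \cdot (-\lambda)$. By uniqueness of dominant representatives in a Weyl orbit, $\lambda = -w_0^M \lambda$, as claimed. The one technical step that deserves care is the verification that the contragredient negates the Harish-Chandra parameter modulo $W_M$; once that is granted, the rest of the argument is formal.
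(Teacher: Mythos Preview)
Your proof is correct and follows essentially the same route as the paper: use unitarity to identify the conjugate with the contragredient, observe that conjugation preserves the (real) infinitesimal character while dualizing sends it to $-\lambda$ modulo $W_M$, and then pick out the dominant representative $-w_0^M\lambda$. You supply more detail (the sanity check via finite-dimensional modules and the explicit appeal to uniqueness of the dominant representative), but the argument is the same.
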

\begin{proof} Since the representation is unitary, its conjugate (i.e. $V\otimes_{\sigma}\CC$, where $\sigma$ is complex conjugation, so all matrix coefficients are conjugated) and its dual are isomorphic. Since $M$ is split, the infinitesimal character of the conjugate is also $\lambda$. The infinitesimal character of the dual (chosen dominant in its $W_M$-orbit) is $-w_0^M\lambda$.
\end{proof}

Let $p$ be a finite prime such that $\Pi_p$ is unramified (or ``spherical''), i.e. has a non-zero $M(\ZZ_p)$-fixed (``spherical'') vector. Note that $M(\ZZ_p)$ is defined using the Chevalley group scheme for the split group $M$, and likewise for $G(\ZZ_p)$. Then for some $\chi_p\in X^*(T)\otimes i\RR$, $\Pi_p$ is isomorphic to a unique irreducible quotient of the (unitarily) parabolically induced representation $\Ind_{B_M(\QQ_p)}^{M(\QQ_p)}(|\chi_p|_p)$ \cite[Theorem 4.17]{Ki}, \cite[4.4(d)]{Ca}, where $B_M:=B\cap M$. Note that $\chi_p$ can be replaced by anything in the same $W_M$-orbit, and that
the character $|\chi_p|_p$ of $T(\QQ_p)$ is unramified, i.e. trivial on $T(\ZZ_p)$. Also, $\Ind_{B_M(\QQ_p)}^{M(\QQ_p)}(|\chi_p|_p)$  is irreducible if $\chi_p$ is regular (i.e. if $\langle\chi_p,\check{\beta}\rangle\neq 0$, for every $\beta\in\Phi_M$). The local component at $p$ of  $\Ind_P^G(\Pi\otimes|s\tilde{\alpha}|)$ is easily seen, from the definition of induction \cite[Ch.4,\S 2]{Ki}, to have a $G(\ZZ_p)$-fixed vector, and by transitivity of induction \cite[Lemma 6.1]{Ki},\cite[I(36)]{Ca} it is a subquotient of $\Ind_{B(\QQ_p)}^{G(\QQ_p)}(|\chi_p+s\tilde{\alpha}|_p)$. Hence it has the spherical subquotient of $\Ind_{B(\QQ_p)}^{G(\QQ_p)}(|\chi_p+s\tilde{\alpha}|_p)$ as an irreducible constituent. Note that $|\chi_p+s\tilde{\alpha}|_p$ is still an unramified character of $T(\QQ_p)$, though it is not unitary for $s\notin i\RR$. In our application, $\chi_p$ will always be regular for $M$, and $s$ chosen so that $\chi_p+s\tilde{\alpha}$ is regular for $G$, hence $\Ind_{P(\QQ_p)}^{G(\QQ_p)}(\Pi_p\otimes|s\tilde{\alpha}|_p)$ will be irreducible.

We refer to $\chi_p$ and $\chi_p+s\tilde{\alpha}$ as the Satake parameters at $p$ of $\Pi$ and $\Ind_P^G(\Pi\otimes|s\tilde{\alpha}|)$, respectively. Let $\h=\h(G(\QQ_p),G(\ZZ_p))$ be the Hecke algebra of $\CC$-valued, compactly supported, $G(\ZZ_p)$-bi-invariant functions on $G(\QQ_p)$. If $f\in\h$ then $f$ acts on $\Ind_{P(\QQ_p)}^{G(\QQ_p)}(\Pi_p\otimes |s\tilde{\alpha}|_p)$ (or any other representation of $G(\QQ_p)$) by $v\mapsto \int_{G(\QQ_p)}g(v)f(g)\,dg$, where $dg$ is a left- and right-invariant Haar measure, normalised so that $G(\ZZ_p)$ has volume $1$. Then $\h$ is a commutative ring under convolution of functions (which corresponds to composition of operators), and is generated by the characteristic functions $T'_{\mu}$ of double cosets $G(\ZZ_p)\mu(p) G(\ZZ_p)$, where $\mu\in X_*(T)$ is any cocharacter. If $v_0$ is a spherical vector then necessarily so is $T'_{\mu}(v_0)$, but since $v_0$ is unique up to scalar multiples, $\h$ acts on $v_0$ by a character. The value of this character on any particular element of $\h$ is a ``Hecke eigenvalue''. (When a classical cuspidal Hecke eigenform is identified with a vector in an automorphic representation of $\GL_2(\A)$, this vector is spherical locally at primes not dividing the level.)

Given $\chi\in X^*(T)\otimes\CC$, there is $t(\chi)\in \hat{T}(\CC)$ such that, for any $\mu\in X_*(T)=X^*(\hat{T})$, $\mu(t(\chi))=|\chi(\mu(p))|_p$.
In the case $\chi=s\lambda$, with $\lambda\in X^*(T)=X_*(\hat{T})$ and $s\in\CC$, we have $t(\chi)=\lambda(p^{-s})$, and $\mu(t(\chi))=|\chi(\mu(p))|_p=p^{-s\langle\lambda,\mu\rangle}$. The Hecke eigenvalue for $T'_{\mu}$, on the spherical representation of $G(\QQ_p)$ with Satake parameter $\chi$ (or $t(\chi)$, thought of as a conjugacy class in $\hat{G}(\CC)$) may be calculated using the Satake isomorphism. In particular, if $\mu$ is minuscule, meaning that the orbit of $\mu$ under $W_G$ is the set of weights for the irreducible representation $\theta_{\mu}$ of $\hat{G}$ with highest weight $\mu$, then the eigenvalue is $p^{\langle \rho_G,\mu\rangle}\Tr(\theta_{\mu}(t(\chi)))=p^{\langle \rho_G,\mu\rangle}\sum_{w\in W_G}|\chi(w(\mu)(p))|_p$ \cite[3.13,6.2]{Gr}. Similarly for spherical representations of $M(\QQ_p)$.

\section{Motives and $L$-functions}
Recall that the representation $\Pi$ of $M(\A)$, at an unramified prime $p$, has a Satake parameter $\chi_p\in X^*(T)\otimes i\RR$, or $t(\chi_p)\in \hat{T}(\CC)\subset \hat{M}(\CC)$. Given a representation $r:\hat{M}\rightarrow \GL_d$, we may define a local $L$-factor
$$L_p(s,\Pi_p,r):=\det(I-r(t(\chi_p))p^{-s})^{-1},$$
and an $L$-function (in general incomplete)
$$L_{\Sigma}(s,\Pi,r):=\prod_{p\notin \Sigma}L_p(s,\Pi_p,r),$$
where $\Sigma$ is a finite set of primes containing all those such that $\Pi_p$ is ramified (i.e. not spherical).

In particular, we take for $r$ the adjoint representation of $\hat{M}$ on the Lie algebra $\hat{\n}$ of the unipotent radical of the maximal parabolic $\hat{P}$. Now $\hat{\n}$ is a direct sum of subspaces on which $\hat{T}$ acts by those positive roots of $\hat{G}$ that are not roots of $\hat{M}$. These are identified with the coroots $\check{\gamma}$ of $G$, as $\gamma$ runs through $\Phi_N$. It follows that
$$L_p(s,\Pi_p,r)^{-1}=\prod_{\gamma\in\Phi_N}(1-\check{\gamma}(t(\chi_p))p^{-s})=\prod_{\gamma\in\Phi_N}(1-|\chi_p(\check{\gamma}(p))|_pp^{-s}).$$
Actually, $r$ is a direct sum of irreducible representations $r_i$ for some $1\leq i\leq m$,
where $r_i$ acts on the direct sum $\hat{\n}_i$ of root spaces for $\{\check{\gamma}:\,\gamma\in\Phi_N^i\}$ with
$$\Phi_N^i:=\{\gamma\in\Phi_N:\langle\tilde{\alpha},\check{\gamma}\rangle=i\},$$
see \cite[Theorem 6.6]{Ki}, and so
$$L_{\Sigma}(s,\Pi,r)=\prod_{i=1}^mL_{\Sigma}(s,\Pi,r_i).$$ Note that $L_{\Sigma}(0,\Pi\otimes|s\tilde{\alpha}|,r_i)=L_{\Sigma}(is,\Pi,r_i)$, and beware that here $i$ is not $\sqrt{-1}$.

Let $s\in\RR$ be chosen so that $\lambda+s\tilde{\alpha}\in X^*(T)$. Then according to \cite[Conjecture 3.2.1]{BG}, there should exist a continuous representation $\rho_{\Pi\otimes |s\tilde{\alpha}|}:\Gal(\Qbar/\QQ)\rightarrow \hat{M}(E_{\q})$, such that if $p\notin \Sigma\cup\{q\}$ then
$\rho_{\Pi\otimes |s\tilde{\alpha}|}$ is unramified at $p$, with $\rho_{\Pi\otimes |s\tilde{\alpha}|}(\Frob_p^{-1})$ conjugate in $\hat{M}(E_{\q})$ to $t(\chi_p+s\tilde{\alpha})$. Here, $E$ is a certain field of definition of the Satake parameters
\cite[Definitions 2.2.1, 3.1.3]{BG}, and $\q$ is any prime divisor. Moreover, by \cite[Conjecture 4.5]{Cl} (applied to the conjectural functorial lift of $\Pi\otimes |s\tilde{\alpha}|$ to $\GL_d(\A)$), $r\circ\rho_{\Pi\otimes |s\tilde{\alpha}|}$ should be the $\q$-adic realisation of a motive $\MMM(r,\Pi\otimes |s\tilde{\alpha}|)$. In fact, this should be a direct sum $\oplus_{i=1}^m\MMM(r_i,\Pi\otimes |s\tilde{\alpha}|)$.
We shall assume the existence of these motives, or at least of the associated premotivic structures (realisations and comparison isomorphisms).

In fact, we need to make a weaker assumption than that $\lambda+s\tilde{\alpha}\in X^*(T)$. We assume only that $\lambda+s\tilde{\alpha}$ is {\em algebraically integral}, i.e. that $\langle\lambda+s\tilde{\alpha},\check{\gamma}\rangle\in\ZZ$ for all $\gamma\in\Phi$. It will no longer necessarily be the case that the Satake parameters of $\Pi\otimes |s\tilde{\alpha}|$ are defined over a number field, but those of the lift to $\GL_d(\A)$ should be, and we take $E$ to be their field of definition.

If $r_{\infty}:W_{\RR}\rightarrow\hat{M}(\CC)$ is the Langlands parameter at $\infty$ (of $\Pi\otimes|s\tilde{\alpha}|$) then, restricting to the subgroup $\CC^{\times}$, of index two in the Weil group $W_{\RR}$, $r_{\infty}(z)$ is conjugate in $\hat{M}(\CC)$ to $(\lambda+s\tilde{\alpha})(z)(\lambda'+s\tilde{\alpha})(\overline{z})$, where $\lambda'$ is in the same $W_M$-orbit as $\lambda$ \cite[\S 2.3]{BG}. (Note that $\tilde{\alpha}$ is fixed by $W_M$.) Actually, because $\Pi_{\infty}$ is unitary, we must have $\lambda'=-\lambda$ (which is in the $W_M$-orbit of $\lambda$ also by Lemma \ref{unitary}). Then $r_i\circ r_{\infty}(z)$ is conjugate in $\GL_{d_i}(\CC)$ to $\diag(z^{\langle\lambda+s\tilde{\alpha},\check{\gamma}\rangle}\overline{z}^{\langle-\lambda+s\tilde{\alpha},\check{\gamma}\rangle})_{\gamma\in\Phi_N^i}=
\diag(z^{\langle\lambda,\check{\gamma}\rangle +is}\,\overline{z}^{-\langle\lambda,\check{\gamma}\rangle +is})_{\gamma\in\Phi_N^i}$.
It follows that the Hodge type of $\MMM(r_i,\Pi\otimes |s\tilde{\alpha}|)$ should be $\{(-\langle\lambda,\check{\gamma}\rangle -is,\langle\lambda,\check{\gamma}\rangle - is): \gamma\in\Phi_N^i\}$. (For the minus sign, see \cite[1.1.1.1]{De2}. This accords with the fact that making a positive Tate twist reduces the weight.) The complex conjugation $F_{\infty}$ on the Betti realisation $H_B(\MMM(r_i,\Pi\otimes |s\tilde{\alpha}|))\otimes\CC$ should exchange $(p,q)$ and $(q,p)$, so the next lemma is no surprise.

\begin{lem}\label{symmetry} If $\gamma\in \Phi_N$ then $\gamma':=w_0^M\gamma$ is also in $\Phi_N$, and $\langle\lambda,\check{\gamma'}\rangle=-\langle\lambda,\check{\gamma}\rangle$.
\end{lem}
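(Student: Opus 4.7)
My plan is to deduce both assertions from two ingredients: the fact that $w_0^M$ lies in $W_M$ (so its reflections involve only the simple roots $\beta\in\Delta_M$), and the statement $w_0^M\lambda=-\lambda$ already proven in Lemma \ref{unitary}.

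For the first assertion, I would write an arbitrary root $\delta$ in the basis $\Delta_G=\Delta_M\cup\{\alpha\}$ as $\delta=k(\delta)\,\alpha+\sum_{\beta\in\Delta_M}c_\beta(\delta)\,\beta$, with $\delta\in\Phi_N$ iff $k(\delta)>0$ by the characterisation of $\Phi_N$ recalled in \S 2. Each generating reflection $s_\beta$ of $W_M$ ($\beta\in\Delta_M$) sends $\delta$ to $\delta-\langle\delta,\check\beta\rangle\,\beta$, which only modifies the $\Delta_M$-coefficients and leaves $k(\delta)$ unchanged. Iterating, $k(w_0^M\gamma)=k(\gamma)>0$, so $\gamma':=w_0^M\gamma\in\Phi_N$.

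For the second assertion, I would use the Weyl-equivariance of the root/coroot bijection: since $w_0^M$ permutes the roots of $G$, the associated action on $X_*(T)$ satisfies $w_0^M(\check\gamma)=\widecheck{w_0^M\gamma}=\check{\gamma'}$. The pairing $\langle\,,\,\rangle$ is $W_G$-invariant (and in particular $W_M$-invariant), so
\[
\langle\lambda,\check{\gamma'}\rangle=\langle\lambda,w_0^M(\check\gamma)\rangle=\langle (w_0^M)^{-1}\lambda,\check\gamma\rangle=\langle w_0^M\lambda,\check\gamma\rangle,
\]
using that $w_0^M$ is an involution. Now I invoke Lemma \ref{unitary}, which asserts $w_0^M\lambda=-\lambda$ (this applies because the lemma was stated for the infinitesimal character of a unitary irreducible representation of $M(\RR)$, which is precisely our $\lambda$ from $\Pi_\infty$). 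This yields $\langle\lambda,\check{\gamma'}\rangle=-\langle\lambda,\check\gamma\rangle$, as required.

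There is no serious obstacle; the only point one must watch is the need to pass from roots to coroots compatibly with the Weyl action, which is standard. The content of the lemma is really the observation that unitarity of $\Pi_\infty$ (via Lemma \ref{unitary}) combined with $W_M$-stability of $\Phi_N$ produces the $(p,q)\leftrightarrow(q,p)$ symmetry of Hodge types expected for the Betti realisation of $\mathcal M(r_i,\Pi\otimes|s\tilde\alpha|)$.
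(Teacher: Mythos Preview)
Your proof is correct and follows essentially the same approach as the paper: both assertions are deduced from Lemma~\ref{unitary} together with $W_M$-invariance. For the first assertion the paper argues that $w_0^M$ is represented by conjugation by an element of $M$, which preserves $N$ and hence $\Phi_N$; your combinatorial argument (that reflections $s_\beta$ with $\beta\in\Delta_M$ leave the $\alpha$-coefficient unchanged) is an equally standard alternative. For the second assertion the paper computes with the inner product on roots, obtaining $\langle\lambda,\gamma'\rangle=-\langle\lambda,\gamma\rangle$, and then passes to coroots by multiplying by $2/\langle\gamma',\gamma'\rangle=2/\langle\gamma,\gamma\rangle$; you instead work directly with the pairing $X^*(T)\times X_*(T)\to\ZZ$ and the Weyl-equivariance $w_0^M(\check\gamma)=\check{\gamma'}$, which is the same computation in different packaging.
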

\begin{proof}
We have that $w_0^M$ is represented by the conjugation action of some element of $M$, which preserves $N$, so $\gamma'\in\Phi_N$. By Lemma \ref{unitary}, $\lambda=-w_0^M\lambda$ and hence $\langle\lambda,\gamma'\rangle=-\langle w_0^M\lambda,w_0^M\gamma\rangle=-\langle\lambda,\gamma\rangle$. Now multiply by $\frac{2}{\langle\gamma',\gamma'\rangle}=\frac{2}{\langle\gamma,\gamma\rangle}$.
\end{proof}
In fact, it is easy to see that if $\gamma\in\Phi_N^i$ then $\gamma'\in\Phi_N^i$.

We shall be especially concerned with the Tate twist $\MMM(r_i,\Pi\otimes |s\tilde{\alpha}|)(1)$. Let $H_B(\MMM(r_i,\Pi\otimes |s\tilde{\alpha}|)(1))$ and $H_{\dR}(\MMM(r_i,\Pi\otimes |s\tilde{\alpha}|)(1))$ be the Betti and de Rham realisations, and let $H_B(\MMM(r_i,\Pi\otimes |s\tilde{\alpha}|)(1))^{\pm}$ be the eigenspaces for the complex conjugation $F_{\infty}$. As in \cite[1.7]{De1},
$\MMM(r_i,\Pi\otimes |s\tilde{\alpha}|)(1)$ is {\em critical} if $\dim(H_B(\MMM(r_i,\Pi\otimes |s\tilde{\alpha}|)(1))^+)=\dim(H_{\dR}(\MMM(r_i,\Pi\otimes |s\tilde{\alpha}|)(1))/\Fil^0)$. Let $\wt=-2is-2$ be the weight of $\MMM(r_i,\Pi\otimes |s\tilde{\alpha}|)(1)$ (so $p+q=\wt$ for all $(p,q)$ in the Hodge type), and let $h^{p,q}$ be the dimension of the $(p,q)$-part $H^{p,q}$ of $H_B(\MMM(r_i,\Pi\otimes |s\tilde{\alpha}|)(1))\otimes\CC$. Note that $F_{\infty}$ exchanges $H^{p,q}$ and $H^{q,p}$, so $h^{p,q}=h^{q,p}$.

\begin{prop}\label{critical}
Let $b_i$ be the smallest non-zero positive value of $\langle\lambda,\check{\gamma}\rangle$, for $\gamma\in\Phi_N^i$.
\begin{enumerate}
\item If $\wt$ is odd, or if $\wt$ is even but $h^{\wt/2,\wt/2}=0$, then $\MMM(r_i,\Pi\otimes |s\tilde{\alpha}|)(1)$ is critical for $0<s\leq\frac{b_i-1}{i}$ (subject also to $\lambda+s\tilde{\alpha}$ being algebraically integral).
\item If $\wt$ is even and $h^{\wt/2,\wt/2}\neq 0$, suppose that $F_{\infty}$ acts on $H^{\wt/2,\wt/2}$ by a scalar (necessarily $(-1)^t$ with $t=0$ or $1$). Then $\MMM(r_i,\Pi\otimes |s\tilde{\alpha}|)(1)$ is critical for $0<s\leq\frac{b_i-1}{i}$, subject also to $\lambda+s\tilde{\alpha}$ being algebraically integral, and the extra condition $t=0$. (Note that whenever $s$ goes up by $1$, $\MMM(r_i,\Pi\otimes |s\tilde{\alpha}|)(1)$ gets Tate twisted by $i$, so $t$ changes by $i\pmod{2}$. So the condition $t=0$ amounts to a kind of parity condition on $is$.)
\end{enumerate}
\end{prop}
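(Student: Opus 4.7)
The plan is to apply Deligne's criticality criterion directly to the Hodge data computed just before the statement. Recall that the Hodge type of $\MMM(r_i,\Pi\otimes|s\tilde{\alpha}|)(1)$ is
$$\{(-c_\gamma-is-1,\,c_\gamma-is-1):\gamma\in\Phi_N^i\},$$
with $c_\gamma:=\langle\lambda,\check{\gamma}\rangle$, and its weight is $\wt=-2is-2$. The hypothesis that $\lambda+s\tilde{\alpha}$ is algebraically integral, combined with Lemma \ref{symmetry} applied to $\gamma$ and $w_0^M\gamma$ (and the remark following that lemma that $w_0^M\gamma\in\Phi_N^i$), forces each coordinate of every Hodge pair to be an integer and the resulting Hodge numbers to satisfy $h^{p,q}=h^{q,p}$.

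Next I would invoke Deligne's criterion: $\MMM(r_i,\Pi\otimes|s\tilde{\alpha}|)(1)$ is critical iff neither $L_\infty(\MMM(r_i,\Pi\otimes|s\tilde{\alpha}|)(1),s)$ has a pole at $s=0$ nor $L_\infty(\MMM(r_i,\Pi\otimes|s\tilde{\alpha}|)(1)^{\vee},1-s)$ does. Writing $L_\infty$ as a product of factors $\Gamma_{\CC}(s-p)^{h^{p,q}}$ over Hodge pairs with $p<q$, together with $\Gamma_{\RR}$-factors on the diagonal, the off-diagonal part produces the condition that every such pair must satisfy $p\leq -1$ and $q\geq 0$. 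Grouping the $\gamma$'s with $c_\gamma\neq 0$ into pairs $\{\gamma,w_0^M\gamma\}$ of opposite $c$-values, the pair with $p<q$ comes from $c_\gamma>0$ and equals $(-c_\gamma-is-1,\,c_\gamma-is-1)$. Now $p\leq -1$ reads $c_\gamma+is\geq 0$, which is automatic from $c_\gamma>0$ and $s>0$, while $q\geq 0$ reads $c_\gamma\geq is+1$. Applied over all $c_\gamma>0$ this becomes $b_i\geq is+1$, equivalently $s\leq(b_i-1)/i$: the upper bound claimed in both parts.

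This already settles part (1): when $\wt$ is odd one has $is\in\tfrac12+\ZZ$, which combined with $c_\gamma+is\in\ZZ$ forces every $c_\gamma$ to be a half-integer, so no $\gamma$ contributes to the middle and $h^{\wt/2,\wt/2}=0$ automatically; the other sub-case assumes this directly. For part (2) the diagonal piece at $(\wt/2,\wt/2)=(-is-1,-is-1)$ contributes $\Gamma_{\RR}(s-\wt/2+\delta)$ on the primal side and $\Gamma_{\RR}(s+\wt/2+\delta)$ on the dual side, where $\delta\in\{0,1\}$ is determined by the requirement $(-1)^t=(-1)^{\wt/2+\delta}$, equivalently $\delta\equiv t+\wt/2\pmod 2$. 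The primal-side check at $s=0$ gives shift $is+1+\delta\geq 2>0$, so there is no pole. The dual-side check at $s=1$ gives the shift $-is+\delta$, and a parity count shows: if $t=1$ then $\delta\equiv is\pmod 2$, making $-is+\delta$ even and nonpositive and producing a pole; if $t=0$ then $\delta\equiv is+1\pmod 2$, making $-is+\delta$ odd so that no pole arises. This yields the extra condition $t=0$ of the proposition, and the parenthetical remark then just records that shifting $s\to s+1$ Tate-twists the motive by $i$, flipping the middle $F_\infty$-eigenvalue by $(-1)^i$.

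The main obstacle is precisely this last parity bookkeeping: both pole conditions have to be tracked simultaneously, the offset $\delta$ enters asymmetrically on the primal and dual sides, and the surviving obstruction must land unambiguously at $t=0$ rather than at some parity condition on $is$ alone.
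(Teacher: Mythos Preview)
Your argument is correct, but you take a different route from the paper. The paper uses Deligne's criticality criterion in its dimension form, $\dim(H_B^+)=\dim(H_{\dR}/\Fil^0)$, rather than the equivalent $\Gamma$-factor formulation you employ. In case~(1) the paper simply observes that both sides equal half the total dimension, and that $\dim(H_{\dR}/\Fil^0)$ is half iff, in the list of Hodge pairs ordered by increasing $p$, the $p$-coordinate immediately to the right of centre is non-negative; this is exactly $b_i-is-1\geq 0$. In case~(2) the paper notes that $\dim(H_{\dR}/\Fil^0)=\tfrac12[\dim+h^{\wt/2,\wt/2}]$, which matches $\dim(H_B^+)$ precisely when $F_\infty$ acts by $+1$ on the middle piece, i.e.\ $t=0$. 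Your approach via the poles of $\Gamma_{\RR}$ and $\Gamma_{\CC}$ is longer and requires the parity bookkeeping you flag as the main obstacle, but it does make the obstruction visible as a concrete pole on the dual side; the paper's version sidesteps that bookkeeping entirely by reading the condition $t=0$ straight off a dimension count.
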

\begin{proof}
\begin{enumerate}
\item In this case, $$\dim(H_B(\MMM(r_i,\Pi\otimes |s\tilde{\alpha}|)(1))^+)=\frac{1}{2}\dim(H_B(\MMM(r_i,\Pi\otimes |s\tilde{\alpha}|)(1))).$$ We have also $$\dim(H_{\dR}(\MMM(r_i,\Pi\otimes |s\tilde{\alpha}|)(1))/\Fil^0)=\frac{1}{2}\dim(H_{\dR}(\MMM(r_i,\Pi\otimes |s\tilde{\alpha}|)(1)))$$ if and only if, when the $(p,q)$, with multiplicities, are listed in order of increasing $p$, the $p$ immediately to the right of the centre is non-negative. This is to say that $b_i-is-1\geq 0$, i.e. that $s\leq \frac{b_i-1}{i}$.
\item This follows similarly from $$\dim(H_{\dR}(\MMM(r_i,\Pi\otimes |s\tilde{\alpha}|)(1))/\Fil^0)=\frac{1}{2}[\dim(H_{\dR}(\MMM(r_i,\Pi\otimes |s\tilde{\alpha}|)(1)))+h^{\wt/2,\wt/2}].$$
\end{enumerate}
\end{proof}
Note that in case (2), if $F_{\infty}$ did not act on $H^{\wt/2,\wt/2}$ by a scalar then there would be no critical values. The proposition describes all the positive $s$ for which $\MMM(r_i,\Pi\otimes |s\tilde{\alpha}|)(1)$ is critical. We ignore negative $s$. In terms of $L$-functions, we are ignoring critical values that are central or left-of-centre. (In fact, if $\wt$ is even we are also ignoring any critical value immediately to the right of centre, by excluding $s=0$.) If there is no non-zero value of $\langle\lambda,\check{\gamma}\rangle$ for $\gamma\in\Phi_N^i$ then there is no upper bound on $s$--we might say that $b_i=\infty$.

We have that $\MMM(r,\Pi\otimes |s\tilde{\alpha}|)(1)$ is critical for $0<s\leq\min_{i}\frac{b_i-1}{i}$, subject also to $\lambda+s\tilde{\alpha}$ being algebraically integral, and the simultaneous parity conditions. Recall that we chose $w\in W_G$ such that $w(\lambda)$ is dominant. If $\lambda$ is on the wall of a Weyl chamber then there is more than one possible choice of $w$. Assuming we have chosen $\lambda$ to be strictly dominant for $M$, i.e. $\langle\lambda,\check{\beta}\rangle>0$ for all $\beta\in\Delta_M$ (hence for all $\beta\in\Phi_M^+$), $\lambda$ can only be on the wall of a Weyl chamber if $\langle\lambda,\check{\gamma}\rangle=0$ for some $\gamma\in\Phi_N$. Since $\langle\tilde{\alpha},\check{\gamma}\rangle=i\neq 0$ for $\gamma\in\Phi_N^i$, for $s>0$ sufficiently small $\lambda+s\tilde{\alpha}$ is not on the wall of a Weyl chamber. Hence we may (and shall) choose $w$ so that $w(\lambda+s\tilde{\alpha})$ is strictly dominant, for $s>0$ sufficiently small, i.e. $\langle w(\lambda+s\tilde{\alpha}),\check{\beta}\rangle>0$ for all $\beta\in\Delta_G$. In other words, $w(\lambda+s\tilde{\alpha})$ is dominant and regular. Once $s$ reaches $\min_{i}\frac{b_i}{i}$, which is when $\MMM(r,\Pi\otimes |s\tilde{\alpha}|)(1)$ stops being critical, also some $\langle\lambda+s\tilde{\alpha},\check{\gamma}\rangle$ reaches $0$ (with $\gamma\in\Phi_N^i$ such that $\langle \lambda,\check{\gamma}\rangle=-b_i$), and is about to change sign. Thus $\lambda+s\tilde{\alpha}$ has reached the wall of a Weyl chamber, as has $w(\lambda+s\tilde{\alpha})$, so $w(\lambda+s\tilde{\alpha})$ ceases to be strictly dominant at this point. We have then a remarkable correspondence between critical arguments and strictly dominant weights, which will be illustrated in the examples later.

\section{The main conjecture}
Recall the field of definition $E$ from the previous section. Suppose that $s>0$ and that $\MMM(r,\Pi\otimes |s\tilde{\alpha}|)(1)$ is critical. Let $\q$ be a prime divisor, dividing a rational prime $q$ such that $\Pi_q$ is unramified and such that $q>\mathcal{B}_i$, where
$$\mathcal{B}_i:=\begin{cases} 2\max_{\gamma\in\Phi_N^i}\langle\lambda,\check{\gamma}\rangle+1 & \text{ if $\max_{\gamma\in\Phi_N^i}\langle\lambda,\check{\gamma}\rangle\neq 0$;}\\ 2+is & \text{ if $\max_{\gamma\in\Phi_N^i}\langle\lambda,\check{\gamma}\rangle=0$}.\end{cases}$$
Let $O_{\q}$ be the ring of integers of the completion $E_{\q}$, and $O_{(\q)}$ the localisation at $\q$ of the ring of integers $O_E$ of $E$. For $1\leq i\leq m$, choose an $O_{(\q)}$-lattice $T_{i,B}$ in $H_B(\MMM(r_i,\Pi\otimes |s\tilde{\alpha}|))$ in such a way that $T_{i,\q}:=T_{i,B}\otimes O_{\q}$ is a $\Gal(\Qbar/\Q)$-invariant lattice in the $\q$-adic realisation. Then choose an $O_{(\q)}$-lattice $T_{i,\dR}$ in
 $H_{\dR}(\MMM(r_i,\Pi\otimes |s\tilde{\alpha}|))$ in such a way that
$$\VVV(T_{i,\dR}\otimes O_{\q})=T_{i,\q}$$ as $\Gal(\Qbar_q/\QQ_q)$-representations, where $\VVV$ is the version of the Fontaine-Lafaille functor used in \cite{DFG}. Since $\VVV$ only applies to filtered $\phi$-modules, where $\phi$ is the crystalline Frobenius, $T_{i,\dR}$ must be $\phi$-stable. Anyway, this choice ensures that the $\q$-part of the Tamagawa factor at $q$ is trivial (by \cite[Theorem 4.1(iii)]{BK}), thus simplifying the Bloch-Kato conjecture below. The condition $q>\mathcal{B}_i$ ensures that the condition (*) in \cite[Theorem 4.1(iii)]{BK} holds.

Let $\Omega$ be a Deligne period scaled according to the above choice, i.e. the determinant of the isomorphism
$$H_B(\MMM(r_i,\Pi\otimes |s\tilde{\alpha}|)(1))^+\otimes\CC\simeq (H_{\dR}(\MMM(r_i,\Pi\otimes |s\tilde{\alpha}|)(1))/\Fil^0)\otimes\CC,$$
calculated with respect to bases of $T_{i,B}^+$ and $T_{i,\dR}/\Fil^1$, so well-defined up to $O_{(\q)}^{\times}$. As before, let $\Sigma$ be a finite set of finite primes, containing all $p$ such that $\Pi_p$ is ramified, but it should now not contain $q$.

In Case (1) below, the formulation of the Bloch-Kato conjecture is based on \cite[(59)]{DFG}, using the exact sequence in their Lemma 2.1.
\begin{conj}[Bloch-Kato]\label{BK}  $ $
\begin{enumerate}
\item If $\Sigma\neq \emptyset$ then $$\ord_{\q}\left(\frac{L_{\Sigma}(1+is,\Pi,r_i)}{\Omega}\right)=\ord_{\q}\left(\frac{\#H^1_{\Sigma}(\Q,T_{i,\q}^*\otimes (E_{\q}/O_{\q}))}{\#H^0(\Q,T_{i,\q}^*\otimes(E_{\q}/O_{\q}))}\right).$$
\item If $\Sigma=\emptyset$ then $$\ord_{\q}\left(\frac{L(1+is,\Pi,r_i)}{\Omega}\right)$$
$$=\ord_{\q}\left(\frac{\#H^1_{\emptyset}(\Q,T_{i,\q}^*\otimes (E_{\q}/O_{\q}))}{\#H^0(\Q,T_{i,\q}^*\otimes(E_{\q}/O_{\q}))\#H^0(\Q,T_{i,\q}(1)\otimes(E_{\q}/O_{\q}))}\right).$$
\end{enumerate}
\end{conj}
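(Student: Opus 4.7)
The statement is the Bloch--Kato Tamagawa number conjecture (in the equivariant form of \cite{DFG}) applied to the Tate twist $\MMM(r_i,\Pi\otimes|s\tilde\alpha|)(1)$, so in full generality this is not something one proves in a few paragraphs; my plan is therefore to describe how it would be reduced to standard (largely conjectural) inputs, and to identify where the genuinely new work would lie. The conjecture is being introduced in \S 4 as a working hypothesis that will be used in \S 14 to motivate Conjecture \ref{main}, not proved in the paper.

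First, I would reconcile the formulation with the original statement in \cite{BK}. The right-hand side in both cases is, up to manifestly invertible factors, the ratio of a Selmer group and its tautological $H^0$ subgroup; the absence of a Tamagawa factor at the prime $q$ itself is justified by the hypothesis $q>\mathcal{B}_i$ together with \cite[Theorem 4.1(iii)]{BK}, once one observes that the chosen lattice $T_{i,\dR}$ is $\phi$-stable and matches $T_{i,\q}$ under the Fontaine--Lafaille functor $\VVV$ of \cite{DFG}. The transition between cases (1) and (2) amounts, at each $p\in\Sigma$, to trading the local Tamagawa factor and the missing Euler factor $L_p(1+is,\Pi_p,r_i)^{-1}$ against a relaxation of the local condition at $p$ from $H^1_f$ to all of $H^1$; this is a bookkeeping exercise following \cite[\S 2]{DFG} and in particular the exact sequence of their Lemma~2.1, using that for $p\neq q$ the local Selmer quotients are finite of the expected order.

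The genuine content is then the Bloch--Kato main equality between the $\q$-valuation of the normalized $L$-value and of the Selmer group. The only route known to attack such statements is through an Iwasawa main conjecture: interpolate $L_\Sigma(1+is,\Pi,r_i)/\Omega$ as $s$ varies $\q$-adically to produce a $\q$-adic $L$-function, prove divisibility in both directions between that $\q$-adic $L$-function and the characteristic ideal of an Iwasawa Selmer group, and finally specialise back to the chosen $s$. The easier divisibility is attacked by constructing an Euler system for $T_{i,\q}^*$ (Kolyvagin--Kato); the opposite divisibility is attacked by the Ribet/Mazur--Wiles/Skinner--Urban circle of ideas, which constructs non-trivial Selmer elements from congruences between cuspidal and Eisenstein-type automorphic forms. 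Conjecture \ref{main} of the present paper is precisely the input that this second strategy would require in the setting considered here.

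The obstacles, in roughly increasing order of seriousness, are: (a) controlling the period $\Omega$ integrally, so that it is genuinely well-defined modulo $O_{(\q)}^\times$ with the chosen $T_{i,B}$ and $T_{i,\dR}$; (b) the existence of the motive $\MMM(r_i,\Pi\otimes|s\tilde\alpha|)$, which rests on the conjectural $\hat M$-valued Galois representation of \cite[Conjecture 3.2.1]{BG} and its motivicity via \cite[Conjecture 4.5]{Cl}; and (c), the actual bottleneck, the construction of an Euler system or of a $\q$-adic $L$-function for the adjoint action of $\hat M$ on $\hat\n$. Outside a handful of low-rank cases (Dirichlet characters, $\GL_2$, certain Rankin--Selberg situations) none of these is presently available in the required uniformity, which is why the conjecture is formulated as such and used as motivation rather than proved.
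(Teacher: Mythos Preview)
You have correctly identified the essential point: this is not a theorem but a \emph{conjecture} (the Bloch--Kato conjecture, specialised to the motive $\MMM(r_i,\Pi\otimes|s\tilde\alpha|)(1)$), and the paper offers no proof of it whatsoever. It is stated in \S4 purely as a formulation, with the remark that Case~(1) is based on \cite[(59)]{DFG} via the exact sequence in their Lemma~2.1, and is then invoked in \S14 to motivate Conjecture~\ref{main}. There is therefore nothing in the paper to compare your proposal against.

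Your additional discussion---reconciling the formulation with \cite{BK}, explaining why the Tamagawa factor at $q$ disappears under the hypothesis $q>\mathcal{B}_i$ via \cite[Theorem~4.1(iii)]{BK}, and sketching the Euler-system/Eisenstein-congruence dichotomy for attacking such statements---is accurate and well-informed, but it goes well beyond anything the paper attempts. The paper simply records the conjecture and moves on.
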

Here, $T_{i,\q}^*=\Hom_{O_{\q}}(T_{i,\q},O_{\q})$, with the dual action of $\Gal(\Qbar/\Q)$, and $\#$ denotes a Fitting ideal. On the right hand side, in the numerator is a Bloch-Kato Selmer group with local conditions (unramified at $p\neq q$, crystalline at $p=q$) only at $p\notin \Sigma$. The ratio of the two sides is independent of the choice of $\Sigma$ as above.

Let $\tilde{\Pi}$ denote any irreducible, cuspidal, tempered, automorphic representation of $G(\A)$ such that $\tilde{\Pi}_{\infty}$ has infinitesimal character $\lambda+s\tilde{\alpha}$ (up to $W_G$), such as that appearing in Conjecture \ref{main} below. 

Recall $T'_{\mu}\in\h=\h(G(\QQ_p),G(\ZZ_p))$, the characteristic function of the double coset $G(\ZZ_p)\mu(p) G(\ZZ_p)$, where $\mu\in X_*(T)$ is any cocharacter. Let $a(\mu):=\langle w(\lambda+s\tilde{\alpha})-\rho_G,\mu\rangle$, and let $T_{\mu}:=p^{a(\mu)}T'_{\mu}$. In the case that $\mu$ is minuscule, the Hecke eigenvalue for $T_{\mu}$ on the spherical representation $\Ind_P^G(\Pi_p\otimes|s\tilde{\alpha}|_p)$,
or on $\tilde{\Pi}_p$,  is $p^{\langle w(\lambda+s\tilde{\alpha}),\mu\rangle}\Tr(\theta_{\mu}(t(\chi)))$, where $\chi$, or $t(\chi)$, is the Satake parameter. Recall the end of \S 2, and that $\theta_{\mu}$  is the irreducible representation of $\hat{G}$ with highest weight $\mu$. This $\Tr(\theta_{\mu}(t(\chi)))$ is the trace of $\Frob_p^{-1}$ for the motive conjecturally associated to $\Pi\otimes|s\tilde{\alpha}|$ (or $\tilde{\Pi}$) and the representation $\theta_{\mu}$ of $\hat{G}$ (which is restricted to $\hat{M}$ in the former case). Multiplying by the power of $p$ corresponds to taking a big enough Tate twist to make all the Hodge numbers non-negative, as they would be for the cohomology of a nonsingular projective variety. Therefore we expect the Hecke eigenvalues for the $T_{\mu}$ to be algebraic integers. For a different way of arriving at the same power of $p$, see \cite[2.3.1(25)]{H6}.

In what follows, we enlarge the field $E$ to be a common field of definition for the Hecke eigenvalues of $T_{\mu}$ (for all $\mu\in X_*(T)$) on the $\Ind_P^G(\Pi_p\otimes|s\tilde{\alpha}|_p)$ and the $\tilde{\Pi}_p$ (for all unramified $p$), and replace $\q$ by any divisor in this possibly larger field.
Let $T_{\mu}(\Ind_P^G(\Pi_p\otimes|s\tilde{\alpha}|_p))$ and $T_{\mu}(\tilde{\Pi}_p)$ denote the Hecke eigenvalues.

\begin{conj}\label{main} Choose $s>1$ such that $\MMM(r,\Pi\otimes |s\tilde{\alpha}|)(1)$ is critical. Suppose that $\lambda+s\tilde{\alpha}$ is self-dual, i.e. $W_G$-equivalent to its negative. Now fixing $i$, with $\q$ and $\Sigma$ as above (in particular, $q>\mathcal{B}_i$), suppose that
$$\ord_{\q}\left(\frac{L_{\Sigma}(1+is,\Pi,r_i)}{\Omega}\right)>0.$$
Suppose also that the irreducible components of the $\q$-adic representation $r_i\circ\rho_{\Pi\otimes |s\tilde{\alpha}|}$ remain irreducible mod $\q$. Then there exists an irreducible, cuspidal, tempered, automorphic representation $\tilde{\Pi}$ of $G(\A)$ such that
\begin{enumerate}
\item $\tilde{\Pi}_{\infty}$ has infinitesimal character $\lambda+s\tilde{\alpha}$ (up to $W_G$), i.e. the same as $\Ind_P^G(\Pi\otimes|s\tilde{\alpha}|)$.
\item At any finite $p\notin \Sigma$, $\tilde{\Pi}_p$ is unramified, and for all $\mu\in X_*(T)$,
$$T_{\mu}(\tilde{\Pi}_p)\equiv T_{\mu}(\Ind_P^G(\Pi_p\otimes|s\tilde{\alpha}|_p))\pmod{\q}.$$
\end{enumerate}
\end{conj}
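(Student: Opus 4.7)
The plan is to adapt the Ribet--Mazur--Wiles--Urban strategy, producing $\tilde{\Pi}$ by showing that a suitably normalised Eisenstein-type automorphic form attached to $\Pi\otimes|s\tilde{\alpha}|$ must be $\q$-congruent to a cuspidal Hecke eigensystem. The skeleton is as follows. First, I would form the Langlands Eisenstein series $E(g,\Pi,s)$ on $G(\A)$ obtained by parabolic induction of $\Pi\otimes|s\tilde{\alpha}|$ from $P(\A)$. By the theorem of Langlands/Gindikin--Karpelevich \cite[Theorems 5.3, 6.7]{Ki} (as emphasised in the introduction), the constant term along $P$ on the spherical line is an intertwining operator whose eigenvalue on the standard section factors, over $i=1,\ldots,m$, as a ratio of the form $L_\Sigma(is,\Pi,r_i)/L_\Sigma(1+is,\Pi,r_i)$ times archimedean and ramified factors. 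The critical hypothesis together with $s>1$ ensures that $L_\Sigma(1+is,\Pi,r_i)$ is non-zero and non-critical at the boundary, so these ratios make sense and can be tracked $\q$-adically.

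Second, I would fix a $\q$-adic integral structure on the inducing data: an $O_{(\q)}$-lattice in the space of $\Pi$ realised inside automorphic forms on $M(\A)$, and a lattice of sections of $\Ind_P^G(\Pi\otimes|s\tilde{\alpha}|)$. Scaling by the motivic period $\Omega$ chosen in \S 4, the divisibility hypothesis $\ord_{\q}(L_\Sigma(1+is,\Pi,r_i)/\Omega)>0$ should manifest itself as follows: for the correctly normalised integral Eisenstein section $\mathcal{E}$, the $\q$-adic valuation of its constant term is strictly less than that of $\mathcal{E}$ itself. In other words, $\mathcal{E}$ has a ``denominator at $\q$'' in its constant term, of order bounded below by $\ord_\q(L_\Sigma(1+is,\Pi,r_i)/\Omega)$. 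The precise matching of the motivic period $\Omega$ with the normalisation of the Eisenstein section is the subtle point, closely related to Harder's distinction between motivic and topological periods mentioned in \S 1.

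Third, I would invoke an integral analogue of Franke's theorem to realise automorphic forms inside a finitely generated $O_{(\q)}$-module (e.g.\ coherent cohomology of a suitable toroidal compactification for a weight compatible with $w(\lambda+s\tilde{\alpha})$, or Borel--Serre boundary cohomology in Harder's approach of \S 13). Inside such a lattice, the cuspidal Hecke algebra $\TT^{\mathrm{cusp}}$ and the Eisenstein Hecke character $\theta_{\mathcal E}$ attached to $\Ind_P^G(\Pi\otimes|s\tilde{\alpha}|)$ both act, and the Eisenstein ideal $I_{\mathcal E}:=\ker(\theta_{\mathcal E})\cap\TT^{\mathrm{cusp}}$ controls congruences. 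A pigeonhole argument, together with the constant-term computation above, should force $\TT^{\mathrm{cusp}}/(I_{\mathcal E}+\q)\neq 0$, producing a cuspidal Hecke eigensystem $\tilde{\Pi}$ satisfying the congruence of part~(2). Condition (1) on the infinitesimal character is automatic once one works inside the correct weight local system, since the central character of $Z(\g_\CC)$ is determined by $\lambda+s\tilde{\alpha}$. Temperedness, needed to match the expected Hecke bounds, would follow from the expected non-endoscopic character of $\tilde{\Pi}$ and the classification at infinity; alternatively one may replace temperedness by the weaker statement that $\tilde{\Pi}$ is non-CAP, which is what is really used.

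The irreducibility assumption that the components of $r_i\circ\rho_{\Pi\otimes|s\tilde{\alpha}|}$ remain irreducible modulo $\q$ enters exactly here, to rule out a competing Eisenstein component: if $\tilde{\Pi}$ itself were induced from some other Levi, the associated mod-$\q$ Galois pseudo-representation would decompose incompatibly with the assumed irreducibility, giving a contradiction. The condition $q>\mathcal{B}_i$ is used, as in the Bloch--Kato framework of \S 4, to guarantee that the Fontaine--Lafaille functor $\VVV$ applies and that crystalline local conditions match integrally. The main obstacle is the third step: constructing, for a general split reductive $G$, an integral structure on automorphic forms that (a) carries both Eisenstein and cuspidal classes on an equal footing, (b) is compatible with the Hecke action integrally, and (c) relates the ``motivic'' period $\Omega$ used to normalise the $L$-value to the normalisation of the integral Eisenstein section dictated by the Gindikin--Karpelevich formula. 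This is precisely the place where the gap between motivic and topological periods, flagged in \S 1 and discussed further in \S\S 13--14, prevents the present argument from being a proof; it is only proven in cases where one has such an integral theory (e.g.\ $\GL_2$, or the Klingen cases handled by Urban).
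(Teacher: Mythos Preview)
The statement you are addressing is Conjecture~\ref{main}: it is explicitly labelled a conjecture in the paper and carries no proof there. There is nothing to compare your argument against, because the authors do not claim to prove it. What the paper does instead is (i) verify or cite proofs in a handful of low-rank cases ($\GL_2$, certain Klingen cases for $\GSp_2$), (ii) give numerical evidence in many further cases, and (iii) in \S14 run the argument in the \emph{opposite} direction, showing heuristically that a congruence as in the conjecture would produce a nonzero class in the relevant Bloch--Kato Selmer group, consistent with the $L$-value divisibility. So the paper's ``justification'' is motivational and evidential, not deductive.

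Your outline is a fair summary of the Ribet--Mazur--Wiles--Urban template that underlies the few proven cases, and you correctly identify the obstruction: matching the motivic period $\Omega$ used to normalise the $L$-value with the integral normalisation of Eisenstein classes (Harder's motivic-versus-topological periods issue), together with the lack, for general split $G$, of an integral cohomological or coherent theory carrying Eisenstein and cuspidal classes on equal footing. But you should be clear that this is a strategy sketch, not a proof, and that the paper itself makes no claim beyond that. Two further points: your use of the irreducibility-mod-$\q$ hypothesis to exclude ``$\tilde{\Pi}$ itself induced from another Levi'' is not quite how the paper motivates that condition (see the paragraph after the conjecture: it is there to pin down the lattice $T_{i,\q}$ up to scaling so that the divisibility hypothesis is well-posed); and temperedness of $\tilde{\Pi}$ is part of the conjectural conclusion, not something one can expect to extract from an Eisenstein-congruence argument alone.
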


The self-duality condition is necessary so that there is a possibility of $\lambda+s\tilde{\alpha}$ being the infinitesimal character of a unitary representation. In all the examples below, the long element $w_0^G$ of $W_G$ sends any dominant element of $X^*(T/S)$ to its negative, so the condition is automatically satisfied. But there could be problems in other cases, for example when $G=\GL_3$ and $M\simeq \GL_1\times\GL_2$.

Whether or not
$$\ord_{\q}\left(\frac{L_{\Sigma}(1+is,\Pi,r_i)}{\Omega}\right)>0,$$
could depend on the choice of $T_{i,\q}$ (up to scaling), whereas there either is or is not a $\tilde{\Pi}$ satisfying the congruence. Imposing the condition that the irreducible components of $r_i\circ\rho_{\Pi\otimes |s\tilde{\alpha}|}$ remain irreducible mod $\q$ makes the intersection of $T_{i,\q}$ with each irreducible component unique up to scaling, thus resolving the ambiguity. (Note that the irreducible components of the $\q$-adic realisation should correspond to irreducible components of the motive $\MMM(r_i,\Pi\otimes |s\tilde{\alpha}|)$, and we should perhaps look at each one separately.) This is a condition we shall not keep repeating in each case of the conjecture for the remainder of the paper. In one or two examples in this paper, such as the $q=41$ example near the end of \S 9, and the second $q=691$ example following it, the condition is not satisfied but the congruence seems to work anyway. However, there is an example (examined in detail elsewhere) with $G=\SO(4,3)$, $M\simeq\GL_2\times\SO(2,1)$, $q=691$, in which the condition is not satisfied, and the congruence cannot hold because there is no $\tilde{\Pi}$ with the required infinitesimal character. Thus the condition does seem to be necessary in general.

\section{Example: $G=\GL_2$.}
Let $T=\{\diag(t_1,t_2): t_1,t_2 \in \GL_1\}$
be the standard maximal torus, with character group $X^*(T)=\langle e_1,e_2\rangle_{\ZZ}$, where $e_i:\diag(t_1,t_2)\mapsto t_i$, and cocharacter group $X_*(T)=\langle f_1,f_2\rangle_{\ZZ}$, where $f_1:t\mapsto \diag(t,1)$ and $f_2:t\mapsto \diag(1,t)$. With the standard ordering, $\Phi^+=\Delta_G=\{e_1-e_2\}$, and $\rho_G=\frac{1}{2}(e_1-e_2)$. The only possible choice is $\alpha=e_1-e_2$, leading to $P$ being the Borel subgroup of upper triangular matrices, with Levi subgroup $M=T\simeq\GL_1\times\GL_1$. Then $\rho_P=\rho_G$, $\langle \rho_P,\check{\alpha}\rangle=1$ and $\tilde{\alpha}=\frac{1}{2}(e_1-e_2)$. The Weyl group $W$ has a non-identity element swapping $e_1$ and $e_2$, and we take the Weyl-invariant inner product on $X^*(T)\otimes\RR$ such that $\langle e_i,e_j\rangle=\delta_{ij}$, restricted to $X^*(T/S)\otimes\RR$, with $S=\{\diag(t,t):\,t\in\GL_1 \}$.

Since $A=M$, the only choice for $\Pi$ is the trivial representation of $M(\A)$, with $\lambda=0$ and $\chi_p=0$ for all $p$. We can take $\Sigma=\emptyset$. We have $\Phi_N=\Phi^1_N=\{e_1-e_2\}$, $r$ is a $1$-dimensional representation of $\hat{M}\simeq\GL_1\times\GL_1$,
$\MMM(r,\Pi\otimes |s\tilde{\alpha}|)=\QQ(s)$ and $L(s,\Pi,r)=\zeta(s)$ is the Riemann zeta function. We must have $s\in\ZZ$ for $\lambda+s\tilde{\alpha}=\frac{s}{2}(e_1-e_2)$ to be algebraically integral, then the weight $\wt=-2s-2$ of $\MMM(r,\Pi\otimes |s\tilde{\alpha}|)(1)=\QQ(s+1)$ is even, and $F_{\infty}$ acts on $H^{\wt/2,\wt/2}$ by the scalar $(-1)^{1+s}$.
Hence the condition $t=0$ in Proposition \ref{critical} becomes $s$ odd. If $s>1$ then $s+1=k$ with $k\geq 4$ even.

Now $\lambda+s\tilde{\alpha}=\frac{k-1}{2}(e_1-e_2)$, which is already dominant, without having to apply any element of $W_G$. We recognise it as the infinitesimal character of the discrete series representation
$D_k$ of $\GL_2(\R)$, which is $\tilde{\Pi}_{f,\infty}$ for the cuspidal automorphic representation $\tilde{\Pi}_f$ generated by a cuspidal Hecke eigenform $f$ of weight $k$. We have $a(f_1)=\langle \frac{k-1}{2}(e_1-e_2)-\frac{1}{2}(e_1-e_2),f_1\rangle=\frac{k}{2}-1$, and $T_{f_1}=p^{(k/2)-1}T'_{f_1}$ is the standard Hecke operator at $p$. Viewing $f_1\in X^*(\hat{T})$, it is the highest weight of the standard representation of $\hat{G}=\GL_2$, with weights $f_1,f_2$. We see that $f_1$ is a minuscule weight. We have Satake parameter $\chi_p+s\tilde{\alpha}=\frac{k-1}{2}(e_1-e_2)$ for $\Pi_p\otimes|s\tilde{\alpha}|_p$. Then $|\frac{k-1}{2}(e_1-e_2)(f_1(p))|_p=p^{-\frac{k-1}{2}\langle e_1-e_2,f_1\rangle}=p^{-(k-1)/2}$, and similarly $|\frac{k-1}{2}(e_1-e_2)(f_2(p))|_p=p^{(k-1)/2}$. So $\Tr(\theta_{f_1}(t(\chi_p+s\tilde{\alpha})))=p^{(k-1)/2}+p^{-(k-1)/2}$. Multiplying by
$p^{\langle\frac{k-1}{2}(e_1-e_2),f_1\rangle}=p^{(k-1)/2}$, we find that $$T_{f_1}(\Ind_P^G(\Pi_p\otimes|s\tilde{\alpha}|_p))=1+p^{k-1},$$ which we recognise as the Hecke eigenvalue for the holomorphic Eisenstein series of weight $k$, a vector in the space of $\Pi\otimes|s\tilde{\alpha}|$. Note that since $k\geq 4$, the Eisenstein series does converge. In this case, $\mathcal{B}_2=2+2s=k+1$, so the bound on $q$ is $q>k+1$.

The Deligne period for $\QQ(k)$ is $(2\pi i)^k$. If the prime $q>k$ is such that
$$\ord_q\left(\frac{\zeta(k)}{(2\pi i)^k}\right)=\ord_q\left(\frac{B_k}{2k!}\right)>0,$$
where $B_k$ is the Bernoulli number, then Conjecture \ref{main} says that there should be a normalised, cuspidal Hecke eigenform $f=\sum_{n=1}^{\infty}a_n(f)e^{2\pi i n\tau}$ of level $1$, with $E=\Q(\{a_n\})$ and $\q\mid q$ in $O_E$, and it should satisfy $a_p(f)\equiv 1+p^{k-1}\pmod{\q}$ for all primes $p$. (Level $1$ corresponds to $\tilde{\Pi}_f$ being unramified at all finite $p$.) This is the familiar congruence of Ramanujan type (his case being $k=12, q=691$) and here Conjecture \ref{main} is a theorem, see for instance \cite[Theorem 1.1]{DF}.

We can artificially increase the size of $\Sigma$ beyond its minimum, e.g. letting $\Sigma=\{p_0\}$ for some prime $p_0$. This introduces a factor of $(p_0^k-1)/p_0^k$ into $\zeta_{\Sigma}(k)$, so if we allow $\tilde{\Pi}_f$ to be ramified at $p_0$, we should expect to find congruences of the same shape, for all $p\neq p_0$, when $q\mid(p_0^k-1)$. Such congruences, which can be said to be of local origin, specifically for $f\in S_k(\Gamma_0(p_0))$, were predicted by G. Harder (for a different reason \cite[\S 2.9]{H2}), who also gave a numerical example. For a general proof, and further examples, see \cite{DF}.

The case $k=2$ (excluded by $s\neq 1$) is a bit different. Of course, there is no $q$ such that $\ord_q(\zeta(2)/\pi^2)>0$, but even if we enlarge $\Sigma$, say to $\{p_0\}$, then by a theorem of Mazur \cite[Prop. 5.12(ii)]{Ma}, the condition for the congruence (at least with $f\in S_2(\Gamma_0(p_0))$) is $\ord_q((p_0-1)/12)>0$, which does not necessarily hold when $q\mid(p_0^2-1)$. We can include $s=1$ in this case of Conjecture \ref{main} by dropping our insistence that $\tilde{\Pi}_p$ be unramified for all $p\notin\Sigma$, and using $f\in S_2(\Gamma_0(p_1))$ where $p_1$ is another prime chosen so that $\ord_q((p_1-1)/12)>0$. But in the case $q >3$ it is even better to use a theorem of Ribet \cite[Theorem 2.3(2)]{Y} telling us that there exists a newform for $\Gamma_0(p_0p_2)$ satisfying the congruence, given that $q\mid(p_0+1)$. Here $p_2$ is {\em any} prime different from $p_0$ and $q$, and by choosing it so that $q\nmid(p_2^2-1)$, we can still consider the $q$ dividing $(p_0^2-1)$ to be the origin of the congruence.

\section{Example: $G=\GSp_2$, Klingen parabolic.}
Let
$$G=\GSp_2=\{g\in M_4 : g^tJg=\mu J,\mu\in \GL_1\}, \text{ where } J=\begin{pmatrix} 0_2 & -I_2\\I_2 & 0_2\end{pmatrix}.$$ It has a maximal torus $T=\{\diag(t_1,t_2,\mu t_1^{-1},\mu t_2^{-1}): t_1,t_2,\mu\in \GL_1\}$, with $X^*(T)$ spanned by $e_1,e_2$ and $e_0$, sending $\diag(t_1,t_2,\mu t_1^{-1},\mu t_2^{-1})$ to $t_1, t_2$ and $\mu$, respectively. The Weyl group $W_G$ is generated by permutations of the $e_i$ for $1\leq i\leq 2$ (with $e_0$ fixed), and inversions $e_i\mapsto e_0-e_i$, again for $1\leq i\leq 2$, with all other $e_j$ fixed. For $W_G$-invariant inner product on $X^*(T/S)\otimes\RR$ (those elements of $X^*(T)\otimes\RR$ such that the coefficient of $e_0$ is $-(1/2)$ times the sum of the other coefficients) we take the restriction of the bilinear form on $X^*(T)\otimes\RR$ such that $e_0$ is orthogonal to everything and $\langle e_i,e_j\rangle=\delta_{ij}$ for $1\leq i,j\leq n$. With a standard ordering, the positive roots are $\Phi^+=\{e_1-e_2,2e_1-e_0,e_1+e_2-e_0,2e_2-e_0\}$, with simple positive roots $\Delta_G=\{e_1-e_2,2e_2-e_0\}$, and $\rho_G=2e_1+e_2-(3/2)e_0$. In this section we choose $\alpha=e_1-e_2$, so $\Delta_M=\{2e_2-e_0\}$, $\Phi_N=\{e_1-e_2,e_1+e_2-e_0,2e_1-e_0\}$, $\rho_P=2e_1-e_0$, $\langle\rho_P,\check{\alpha}\rangle=2$ and $\tilde{\alpha}=e_1-\frac{1}{2}e_0$. Then we also find that $\Phi_N=\Phi_N^1$, i.e. $m=1$ and $r=r_1$. We have a Levi subgroup $M\simeq\GL_2\times\GL_1$, with
$$\left(A=\begin{pmatrix} a & b\\c & d\end{pmatrix},t\right)\mapsto\begin{pmatrix} t & 0 & 0 & 0\\ 0 & a & 0 & b\\0 & 0 & (\det A)t^{-1} & 0\\0 & c & 0 & d\end{pmatrix},$$ and $P=MN$ the Klingen parabolic, with unipotent radical
$$N=\left\{\begin{pmatrix} 1 & \gamma & \delta & \epsilon\\0 & 1 & \epsilon & 0\\0 & 0 & 1 & 0\\0 & 0 & -\gamma & 1\end{pmatrix}\right\},$$ and $(\gamma,\epsilon,\delta)\cdot(\gamma',\delta',\epsilon')=(\gamma+\gamma',\epsilon+\epsilon',\delta+\delta'+\gamma\epsilon'-\gamma'\epsilon).$

Let $f$ be a newform of weight $k'\geq 2$, trivial character, and $\Pi'$ the associated unitary, cuspidal, automorphic representation of $\GL_2(\A)$. Let $\Pi=\Pi'\times 1$, a unitary, cuspidal, automorphic representation of $M(\A)$. Since $\diag(T_1,T_2)\in\GL_2$ ends up as $\diag(1,T_1,T_1T_2,(T_1T_2)T_1^{-1})$ in $G$, the character of $\GL_2$ called `$e_1-e_2$' in the previous section becomes $2e_2-e_0\in X^*(T)$, so $\lambda=\left(\frac{k'-1}{2}\right)(2e_2-e_0).$ Similarly, at an unramified $p$ with $a_p(f)=p^{(k'-1)/2}(\alpha_p+\alpha_p^{-1})$ (recall that we have assumed the character to be trivial, for simplicity), we have $\chi_p=-\log_p(\alpha_p)(2e_2-e_0)$. (This is a logarithm to base $p$, not a $p$-adic logarithm.)
\vskip10pt
\begin{tabular}{|c|c|c|c|}\hline $\gamma\in\Phi_N$ & $\langle\lambda+s\tilde{\alpha},\check{\gamma}\rangle$ & $\langle 2e_2-e_0,\check{\gamma}\rangle$ & $|\chi_p(\check{\gamma}(p))|_p$\\\hline  $e_1-e_2$ & $s-(k'-1)$ & $-2$ & $\alpha_p^{-2}$\\ $e_1+e_2-e_0$ & $s+(k'-1)$ & $2$ & $\alpha_p^2$\\ $2e_1-e_0$ & $s$ & $0$ & $1$\\\hline \end{tabular}
\vskip10pt
Using the table, $L_p(s,\Pi_p,r)=(1-\alpha_p^2p^{-s})(1-\alpha_p^{-2}p^{-s})(1-p^{-s})$, and $L_{\Sigma}(s,\Pi,r)=L_{\Sigma}(\Sym^2 f,s+(k'-1))$.
We need $s\in\ZZ$ for $\lambda+s\tilde{\alpha}$ to be algebraically integral. (Look at the second column of the table.)
If $\MMM(f)$ is the motive of weight $k'-1$ attached to $f$, with Hodge type $\{(0,k'-1),(k'-1,0)\}$, then $F_{\infty}$ swaps $H^{(0,k'-1)}$ and $H^{(k'-1,0)}$, so acts as $+1$ on $H^{(k'-1,k'-1)}$ in the motive $\MMM(\Sym^2f)$ of weight $2k'-2$. It follows that if we want $t=0$ for $\MMM(r,\Pi\otimes |s\tilde{\alpha}|)(1)$ (c.f. Proposition \ref{critical}(2)) then we need $1+s+(k'-1)$ to be even, so $s$ is even. The minimum non-zero value of $\langle\lambda,\check{\gamma}\rangle$ is $b=k'-1$, so we are looking at even $s$ with $0<s\leq k'-2$.

We have that $\lambda+s\tilde{\alpha}=se_1+(k'-1)e_2-\frac{1}{2}(k'-1+s)e_0$. Let $w\in W_G$ switch $e_1$ and $e_2$, while leaving $e_0$ fixed, then $w(\lambda+s\tilde{\alpha})=(k'-1)e_1+se_2-\frac{1}{2}(k'-1+s)e_0$, which is (strictly) dominant, since $k'-1>s>0$. We recognise this as the infinitesimal character of $\Pi_{F,\infty}$, where $\Pi_F$ is a cuspidal automorphic representation of $G(\A)$ attached to a Siegel cusp form $F$ of genus $2$ and weight $\Sym^j\otimes\det^k$, if $k'-1=j+k-1$ and $s=k-2$ \cite[Theorem 3.1]{Mo}. Put another way, $j+k=k'$ and $1+s+(k'-1)=2k'-2-j$. Notice that $j=k'-s-2$ is even, and $k\geq 4$.

Dual to the basis $\{e_1,e_2,e_0\}$ of $X^*(T)$ is a basis $\{f_1,f_2,f_0\}$ of $X_*(T)$, with $f_1: t\mapsto\diag(t,1,t^{-1},1)$, $f_2:t\mapsto\diag(1,t,1,t^{-1})$ and $f_0: t\mapsto\diag(1,1,t,t)$. If we view $f_1+f_2+f_0$ as a character of $\hat{T}$, it is the highest weight of the $4$-dimensional spinor representation of $\hat{G}\simeq \GSp_2$. The complete set of weights is $\{f_1+f_2+f_0,f_1+f_0,f_2+f_0,f_0\}$. The element of $W_G$ switching $e_1$ and $e_2$ also switches $f_1$ and $f_2$, while fixing $f_0$. The element exchanging $e_1$ with $e_0-e_1$, while leaving $e_0$ and $e_2$ fixed, operates by switching the first and third elements of $\diag(t_1,t_2,\mu t_1^{-1},\mu t_2^{-1})$. So, while leaving $f_2$ fixed, it actually exchanges $f_0$ with $f_1+f_0$. Similarly we see that the element exchanging $e_2$ with $e_0-e_2$ fixes $f_1$ while exchanging $f_0$ with $f_2+f_0$. So $\{f_1+f_2+f_0,f_1+f_0,f_2+f_0,f_0\}$ is a single $W_G$-orbit, and $f_1+f_2+f_0$ is minuscule. Note that $(f_1+f_2+f_0)(p)=\diag(p,p,1,1)$, so $T_{f_1+f_2+f_0}$ is the usual genus-$2$ Hecke operator sometimes called ``$T(p)$''. Using $\chi_p+s\tilde{\alpha}=-\log_p(\alpha_p)(2e_2-e_0)+(k-2)(e_1-\frac{1}{2}e_0)$, we find
\vskip10pt
\begin{tabular}{|c|c|}\hline $\mu$ & $|(\chi_p+s\tilde{\alpha})(\mu(p))|_p$ \\ \hline $f_1+f_2+f_0$ & $\alpha_pp^{-(k-2)/2}$\\ $f_1+f_0$ & $\alpha_p^{-1}p^{-(k-2)/2}$\\ $f_2+f_0$ & $\alpha_pp^{(k-2)/2}$\\ $f_0$ & $\alpha_p^{-1}p^{(k-2)/2}$\\ \hline \end{tabular}
\vskip10pt
The trace is $p^{-(k-2)/2}(\alpha_p+\alpha_p^{-1})(1+p^{k-2})$. Multiplying by
$$p^{\langle\frac{k'-1}{2}(2e_2-e_0)+(k-2)(e_1-\frac{1}{2}e_0),f_1+f_2+f_0\rangle}=p^{(k'-1)/2}p^{(k-2)/2},$$
we find that
$$T_{f_1+f_2+f_0}(\Ind_P^G(\Pi_p\otimes |s\tilde{\alpha}|_p))=a_p(f)(1+p^{k-2}).$$ We recognise this as the eigenvalue of $T(p)$ on the vector-valued holomorphic Klingen-Eisenstein series $[f]$ of weight $\Sym^j\otimes\det^k$, though strictly speaking we would need $k>4$ to guarantee convergence, by \cite[Satz 1]{Kl}, \cite[Proposition 1.2]{A}. We are supposing, for simplicity, that $f$ has level $1$.

Suppose that $q>2\max\langle\lambda,\check{\gamma}\rangle +1=2(k'-1)+1=2k'-1$, and that
$$\ord_{\q}\left(\frac{L(\Sym^2 f,2k'-2-j)}{\Omega}\right)>0,$$
where $\q$ is a divisor of $q$ in a sufficiently large number field. (Here, $\Omega$ is a carefully scaled Deligne period, differing from the Petersson norm of $f$ by a congruence ideal and a power of $2\pi i$, see \cite[Lemma 5.1,(4)]{Du}, where it is called $(2\pi i)^{2(2k'-2-j)}\Omega$. It satisfies the requirement of \S 4, locally at $\q$.) Then Conjecture \ref{main} suggests a mod $\q$ congruence of Hecke eigenvalues between $[f]$ and a cuspidal eigenform of the same weight, and of level $1$ when $f$ is of level 1. Instances of such congruences were proved by Kurokawa and Mizumoto in the case $j=0$ (scalar weight) \cite{Ku,Mi}, by Satoh when $j=2$ \cite{Sa}, and in \cite[Proposition 4.4]{Du} for other $j$ as well. Using instead the pullback formula in \cite[\S 9]{Du} (from \cite[Proposition 4.4]{BSY}), a more general result could be proved, along the lines of what Katsurada and Mizumoto did for scalar weight \cite{KM}, as long as $k>5$. Possibly one can extend to $k=4$ using Hecke summation and analytic continuation.

We should also expect congruences of local origin, when we enlarge $\Sigma$ beyond the set of ramified primes for $\Pi$. For example, when $f$ has level 1 but we make $\Sigma=\{p_0\}$, then $(L_{p_0}(\Sym^2 f,2k'-2-j))^{-1}=p_0^{-(6k'-6-3j)}(p_0^{2k'-2-j}+2p_0^{k'-1}+p_0^j-a_{p_0}(f)^2)(p_0^{k-1}-1)$. Using data calculated as in \cite{BFvdG2}, we found experimental evidence for congruences between Hecke eigenvalues of $[f]$ (with $f$ of level 1) and cuspidal Hecke eigenforms for the principal genus-$2$ congruence subgroup of level~$2$. (In each case the congruence was checked for odd $p\leq 37$.) In all those cases for which the coefficient field is $\QQ$, and in which the modulus $q$ of the apparent congruence satisfies $q>2k'-1$, we checked that $q$ is indeed a divisor of $(p_0^{2k'-2-j}+2p_0^{k'-1}+p_0^j-a_{p_0}(f)^2)(p_0^{k-1}-1)$, with $p_0=2$. These cases are as follows.
\vskip10pt
\begin{tabular}{|c|c|c|}\hline $j$ & $k$ & $q$\\\hline $6$ & $6$ & $31$\\$4$ & $8$ & $41$\\$2$ & $10$ & $167$\\$0$ & $12$ & $89$\\ $10$ & $6$ & $53$\\ $0$ & $16$ & $733$\\ $0$ & $18$ & $967$\\ $4$ & $16$ & $113$\\\hline\end{tabular}
\vskip10pt
We also found a congruence for $(j,k,q)=(16,4,23)$, and in this case $q$ is not a divisor of $(p_0^{2k'-2-j}+2p_0^{k'-1}+p_0^j-a_{p_0}(f)^2)(p_0^{k-1}-1)$ with $p_0=2$ (nor of $\frac{L(\Sym^2 f,2k'-2-j)}{\Omega}$), but neither does it satisfy the condition $q>2k'-1$.

\section{Example: $G=\GSp_2$, Siegel parabolic.}
This time we choose $\alpha=2e_2-e_0$, so $\Delta_M=\{e_1-e_2\}$, $\Phi_N=\{2e_2-e_0,e_1+e_2-e_0,2e_1-e_0\}$, $\rho_P=\frac{3}{2}(e_1+e_2-e_0)$, $\langle\rho_P,\check{\alpha}\rangle=3/2$ and $\tilde{\alpha}=e_1+e_2-e_0$. Then we find that $m=2$, with $\Phi_N^1=\{2e_1-e_0,2e_2-e_0\}$ and $\Phi_N^2=\{e_1+e_2-e_0\}$. We have a Levi subgroup $M\simeq \GL_2\times\GL_1$, with
$$(A,\mu)\mapsto \begin{pmatrix} A & 0_2\\ 0_2 & \mu(A^t)^{-1}\end{pmatrix},$$
and $P=MN$ the Siegel parabolic, with unipotent radical
$$N=\left\{\begin{pmatrix} I_2 & B\\ 0_2 & I_2\end{pmatrix} : B^t=B\right\}.$$

Let $f$ be a newform of weight $k'\geq 2$, trivial character, and $\Pi'$ the associated unitary, cuspidal, automorphic representation of $\GL_2(\A)$. Let $\Pi=\Pi'\times 1$, which is a unitary, cuspidal, automorphic representation of $M(\A)$. Then $\lambda=\frac{k'-1}{2}(e_1-e_2)$ and $\chi_p=-\log_p(\alpha_p)(e_1-e_2)$.
\vskip10pt
\begin{tabular}{|c|c|c|c|}\hline $\gamma\in\Phi_N$ & $\langle\lambda+s\tilde{\alpha},\check{\gamma}\rangle$ & $\langle e_1-e_2,\check{\gamma}\rangle$ & $|\chi_p(\check{\gamma}(p))|_p$\\\hline  $2e_1-e_0$ & $s+\frac{k'-1}{2}$ & $1$ & $\alpha_p$\\ $2e_2-e_0$ & $s-\frac{k'-1}{2}$ & $-1$ & $\alpha_p^{-1}$\\ $e_1+e_2-e_0$ & $2s$ & $0$ & $1$\\\hline \end{tabular}
\vskip10pt
Using the table, $L_p(s,\Pi_p,r_1)=(1-\alpha_pp^{-s})(1-\alpha_p^{-1}p^{-s})$, and $L_{\Sigma}(s,\Pi,r_1)=L_{\Sigma}(f,s+\frac{k'-1}{2})$, while $L_p(s,\Pi_p,r_2)=(1-p^{-s})$, and $L_{\Sigma}(s,\Pi,r_2)=\zeta_{\Sigma}(s)$. We need $s\in\frac{1}{2}+\ZZ$ for $\lambda+s\tilde{\alpha}$ to be algebraically integral. As long as $s>0$, this also ensures that $L_{\Sigma}(1+2s,\Pi,r_2)$ is critical. For $L_{\Sigma}(1+s,\Pi,r_1)$ to be critical, we also need $s<\frac{k'-1}{2}$, and we exclude $s=1/2$.

We then have $\lambda+s\tilde{\alpha}=(\frac{k'-1}{2}+s)e_1+(s-\frac{k'-1}{2})e_2-se_0$. Let $w\in W_G$ switch $e_2$ and $e_0-e_2$, while leaving $e_1$ and $e_0$  fixed, then
$w(\lambda+s\tilde{\alpha})=(\frac{k'-1}{2}+s)e_1+(\frac{k'-1}{2}-s)e_2-\frac{k'-1}{2}e_0$, which is (strictly) dominant, since $0<s<\frac{k'-1}{2}$.
We recognise this as the infinitesimal character of $\Pi_{F,\infty}$, where $\Pi_F$ is a cuspidal automorphic representation of $G(\A)$ attached to a Siegel cusp form $F$ of genus $2$ and weight $\Sym^j\otimes\det^k$, if $j+k-1=\frac{k'-1}{2}+s$ and $k-2=\frac{k'-1}{2}-s$. So $s=\frac{j+1}{2}$ with $j\geq 0$ even, excluding $j=0$, and $k'=j+2k-2$. We find then that $L_{\Sigma}(1+2s,\Pi,r_2)=\zeta_{\Sigma}(j+2)$ and $L_{\Sigma}(1+s,\Pi,r_1)=L_{\Sigma}(f,j+k)$.

Using $\chi_p+s\tilde{\alpha}=-\log_p(\alpha_p)(e_1-e_2)+\frac{j+1}{2}(e_1+e_2-e_0)$, we find
\vskip10pt
\begin{tabular}{|c|c|}\hline $\mu$ & $|(\chi_p+s\tilde{\alpha})(\mu(p))|_p$ \\ \hline $f_1+f_2+f_0$ & $p^{-(j+1)/2}$\\ $f_1+f_0$ & $\alpha_p$\\ $f_2+f_0$ & $\alpha_p^{-1}$\\ $f_0$ & $p^{(j+1)/2}$\\ \hline \end{tabular}
\vskip10pt
The trace is $(\alpha_p+\alpha_p^{-1})+p^{(j+1)/2}+p^{-(j+1)/2}$. Multiplying by
$$p^{\langle (j+k-1)e_1+(k-2)e_2-\frac{j+2k-3}{2}e_0,f_1+f_2+f_0\rangle}=p^{(j+2k-3)/2}=p^{(k'-1)/2},$$
we find that
$$T_{f_1+f_2+f_0}(\Ind_P^G(\Pi_p\otimes |s\tilde{\alpha}|_p))=a_p(f)+p^{k-2}+p^{j+k-1}.$$

We begin with the case $i=1$. Suppose that $q>2\max\langle\lambda,\check{\gamma}\rangle +1=2\left(\frac{k'-1}{2}\right)+1=k'$, and that
$$\ord_{\q}\left(\frac{L(f,j+k)}{\Omega}\right)>0,$$
where $\q$ is a divisor of $q$ in a sufficiently large number field, and $\Omega=(2\pi i)^{j+k}\Omega^{(-1)^{j+k}}$ a Deligne period as in \cite[\S 2]{DIK}. Looking at Conjecture \ref{main}, with $f$ of level $1$ and $\Sigma=\emptyset$, then if $\tilde{\Pi}$ is the automorphic representation attached to a cuspidal Hecke eigenform $F$ (necessarily of level $1$) of weight $\Sym^j\otimes\det^k$, this becomes a conjecture of Harder \cite{H1}. There is ample experimental evidence for this conjecture, due to Faber and van der Geer, using \cite{FvdG}, as described in \cite{vdG}. The first example, also relayed in \cite{H1}, is with $(k',j,k,q)=(22,4,10,41)$.

One might object that Conjecture \ref{main} does not say that $\tilde{\Pi}$ should be attached to a cuspidal Hecke eigenform, i.e. that $\tilde{\Pi}$ should be holomorphic discrete series at $\infty$. But if it is not then it can be replaced by another $\tilde{\Pi}'$ that is, and which is the same as $\tilde{\Pi}$ at all finite places, as long as $\tilde{\Pi}$ is not CAP (which follows from $j>0$) or weakly endoscopic (which would follow from $\ord_q(B_{j+2})=0$). This is a consequence of a combination of two theorems of Weissauer \cite[Theorem 1]{We2}, \cite[Proposition 1.5]{We3}.

If we keep $f$ of level $1$, but enlarge $\Sigma$ to $\{p_0\}$, the conjecture demands congruences ``of local origin'', also first predicted by Harder \cite{H2}. We found experimentally (using data calculated as in \cite{BFvdG2}) several examples of such congruences (checked for odd $p\leq 37$), for $p_0=2$, with $F$ a Hecke eigenform for the principal congruence subgroup of level $2$. They are as follows. Note that if the Hecke eigenvalues of our eigenform $f$ are not defined over $\QQ$ then the congruence is only checked using norms. Note also that the example for $k'=30$ seems to work, though $q\not > k'$.
\vskip10pt
\begin{tabular}{|c|c|c|c|}\hline $k'$ & $j$ & $k$ & $q$\\\hline $16$ & $8$ & $5$ & $19$\\$18$ & $10$ & $5$ & $37$\\$26$ & $2$ & $13$ & $47$\\$30$ & $2$ & $15$ & $23$\\ $20$ & $14$ & $4$ & $61$\\\hline\end{tabular}
\vskip10pt

If we now let $f$ be a newform for $\Gamma_0(p_0)$, and keep $\Sigma=\{p_0\}$ at its minimum, then five apparent congruences are listed in \cite[\S 10]{BFvdG2}, for $p_0=2$, and we have found others since then, including for $\Gamma_0(4)$. D. Fretwell, a student of the second-named author, has used instead the method of \cite{Du2}, applied to algebraic modular forms on a compact form of $\GSp_2$, to find evidence for congruences with $p_0=2,3,5,7$, with $F$ a Hecke eigenform for the paramodular group at $p_0$.

Let us consider the excluded case $s=1/2$, i.e. $j=0$ (so $k'=2k-2$, and $F$ would be scalar-valued), and assume $\Sigma=\emptyset$ for simplicity. When $k$ is even, $a_p(f)+p^{k-1}+p^{k-2}$ is actually equal to the Hecke eigenvalue for the Saito-Kurokawa lift $\mathrm{SK}(f)$, a genus $2$, cuspidal Hecke eigenform of level $1$, weight $k$. Under weak conditions, Katsurada and Brown have independently proved a congruence modulo $\q$ of Hecke eigenvalues, between $\mathrm{SK}(f)$ and a non-lift eigenform $F$ \cite{Br,Ka}. Though $\Pi_{\mathrm{SK}(f)}$ is non-tempered, $\Pi_F$ should be tempered, so $\tilde{\Pi}=\Pi_F$ should satisfy the conjecture as stated (but without the exclusion of $s=1/2$). However, this will not work when $k$ is odd, when there is no Saito-Kurokawa lift. For example, when $k'=48$ (so $k=25$),
$$\ord_{\q}\left(\frac{L(f,k)}{\Omega}\right)>0,$$
with $q=7025111$, yet $S_k(\Sp_2(\ZZ))=\{0\}$ for odd $k<35$.

It seems likely that we can include $s=1/2$ within the scope of Conjecture \ref{main} if, as in \S 5, we drop our insistence that $\tilde{\Pi}_p$ should be unramified for $p\notin\Sigma$. Suppose that there exists a prime $p_0$ such that there exists a newform $g\in S_{k'}(\Gamma_0(p_0))$ with \begin{enumerate} \item $a_p(g)\equiv a_p(f)\pmod{\q}$ for all primes $p\nmid p_0q$;
\item $w_{p_0}(g)=-1$ (when $k$ is odd, $w_{p_0}$ being the eigenvalue of an Atkin-Lehner involution).
\end{enumerate}
 Fixing such a $p_0$ and such a $g$, in place of $\mathrm{SK}(f)$ we can put $\mathrm{Gk}(g)$, the Gritsenko lift \cite{Gri} of a Jacobi form corresponding to $g$ \cite{SZ}. This is a Hecke eigenform for the paramodular group at $p_0$, with Hecke eigenvalues $a_p(g)+p^{k-1}+p^{k-2}$ for $p\neq p_0$, and we would hope that there is a congruence modulo $\q$ of Hecke eigenvalues, between $\mathrm{Gk}(g)$ and a non-lift eigenform $F$. By \cite[Theorem A]{DT} there is a $g$ satisfying at least condition (1) if $a_{p_0}^2\equiv p_0^{k'-2}(1+p_0)^2\pmod{q}$. As in \cite[Lemma 7.1]{Ri2}, the infinitely many primes such that $a_{p_0}\equiv p_0+1\equiv 0\pmod{q}$ satisfy this condition. However, these are not really desirable for our purposes, since one easily checks that when $p_0^2\equiv 1\pmod{q}$ the congruence could be viewed as having local origin at $p_0$, i.e. $\ord_{q}(1-a_{p_0}p_0^{-k}+p_0^{k'-1-2k})>0$, whereas we would like to view it as originating from the $q$ in the complete $L$-value, with $p_0$ merely an auxiliary prime.

We must not forget the case $i=2$. Let's say $\Sigma=\emptyset$, so $f$ is of level $1$. Suppose
$$\ord_{\q}\left(\frac{\zeta(j+2)}{\pi^{j+2}}\right)>0,$$
with $q>2+2s=j+3$. Then, as noted in \S 5, there is a level $1$ cuspidal Hecke eigenform $g$ of weight $j+2$, such that $a_p(g)\equiv 1 +p^{j+1}\pmod{\q}$, for all primes $p$. If there were a genus $2$ Yoshida lift $Y(f,g)$, it would have the right weight $\Sym^j\otimes\det^k$ (in general $j=\wt(g)-2, k=\frac{\wt(f)-\wt(g)}{2}+2$), and the eigenvalue of $T_{f_1+f_2+f_0}$ would be $a_p(f)+p^{k-2}a_p(g)$, which is congruent modulo $\q$ to the right thing to satisfy the conjecture, namely $a_p(f)+p^{k-2}+p^{j+k-1}$. There may seem to be a problem with this, since the Yoshida lift does not exist at level $1$. But the endoscopic lift of $\Pi_f$ and $\Pi_g$ still exists as an automorphic representation, with the same Hecke eigenvalues, and this is all we need to satisfy the conjecture. Indeed, the conjecture does not state that $\tilde{\Pi}$ should have holomorphic vectors. (Here $\tilde{\Pi}_{\infty}$ is non-holomorphic discrete series, with a Whittaker model, and Harish-Chandra parameter $(j+k-1)e_1-(k-2)e_2-\frac{j+1}{2}e_0$, see \cite[top of p.8]{Mo}.)

\section{Example: $G=\GSp_3$, $M\simeq \GL_2\times\GL_2$.}
Let
$$G=\GSp_3=\{g\in M_6 : g^tJg=\mu J\}, \text{ where } J=\begin{pmatrix} 0_3 & -I_3\\I_3 & 0_3\end{pmatrix}.$$
It has maximal torus $T=\{\diag(t_1,t_2,t_3,\mu t_1^{-1},\mu t_2^{-1},\mu t_3^{-1}): t_1,t_2,t_3,\mu\in \GL_1\}$, with $X^*(T)$ spanned by $e_1,e_2,e_3$ and $e_0$, sending $\diag(t_1,t_2,t_3,\mu t_1^{-1},\mu t_2^{-1},\mu t_3^{-1})$ to $t_1, t_2, t_3$ and $\mu$, respectively. The Weyl group $W_G$ is generated by permutations of the $e_i$ for $1\leq i\leq 3$ (with $e_0$ fixed), and inversions $e_i\mapsto e_0-e_i$, again for $1\leq i\leq 3$, with all other $e_j$ fixed. For $W_G$-invariant inner product on $X^*(T/S)\otimes\RR$ (those elements of $X^*(T)\otimes\RR$ such that the coefficient of $e_0$ is $-(1/2)$ times the sum of the other coefficients) we take the restriction of the bilinear form on $X^*(T)\otimes\RR$ such that $e_0$ is orthogonal to everything and $\langle e_i,e_j\rangle=\delta_{ij}$ for $1\leq i,j\leq 3$. With a standard ordering, the positive roots are $\Phi^+=\{e_1-e_2, e_1-e_3, e_2-e_3, 2e_1-e_0, 2e_2-e_0, 2e_3-e_0,e_1+e_2-e_0,e_1+e_3-e_0,e_2+e_3-e_0\}$, with simple positive roots $\Delta_G=\{e_1-e_2, e_2-e_3, 2e_3-e_0\}$, and $\rho_G=3e_1+2e_2+e_3-3e_0$. In this section we choose $\alpha=e_2-e_3$, so $\Delta_M=\{e_1-e_2, 2e_3-e_0\}$, $\Phi_N=\{e_1-e_3, e_2-e_3, 2e_1-e_0, 2e_2-e_0, e_2+e_3-e_0, e_3+e_1-e_0, e_1+e_2-e_0\}$, $\rho_P=\frac{5}{2}(e_1+e_2-e_0)$, $\langle\rho_P,\check{\alpha}\rangle=5/2$ and $\tilde{\alpha}=e_1+e_2-e_0$, then we find also $m=2$, with $\Phi_N^2=\{e_1+e_2-e_0\}$.

We have a Levi subgroup $M\simeq\GL_2\times\GSp_1\simeq \GL_2\times\GL_2$, with
$$\left(A,\begin{pmatrix} a & b\\c & d\end{pmatrix}\right)\mapsto\begin{pmatrix} A & & & \\ & a & & b\\ & & (A^t)^{-1}\mu\begin{pmatrix} a & b\\c & d\end{pmatrix}& \\ & c & & d\end{pmatrix}.$$

Let $f$ and $g$ be newforms of weights $k$ and $\ell$, respectively. Let $\Pi_f$ and $\Pi_g$ be the associated unitary, cuspidal, automorphic representations of $\GL_2(\A)$. Say, for unramified $p$, that $a_p(f)=p^{(k-1)/2}(\alpha_p+\alpha_p^{-1})$ and $a_p(g)=p^{(\ell-1)/2}(\beta_p+\beta_p^{-1})$ (so we are assuming trivial character, for simplicity). For $M\simeq\GL_2\times\GL_2$, let $\Pi=\Pi_f\times\Pi_g$, then $\lambda=\frac{k-1}{2}(e_1-e_2)+\frac{\ell-1}{2}(2e_3-e_0)$ and $\chi_p=-\log_p(\alpha_p)(e_1-e_2)-\log_p(\beta_p)(2e_3-e_0)$.
\vskip10pt
\begin{tabular}{|c|c|c|c|c|}\hline $\gamma\in\Phi_N$ & $\langle e_1-e_2,\check{\gamma}\rangle$ & $\langle 2e_3-e_0,\check{\gamma}\rangle$ & $\langle\lambda+s\tilde{\alpha},\check{\gamma}\rangle$ & $|\chi_p(\check{\gamma}(p))|_p$\\\hline $e_1-e_3$ & $1$ & $-2$ & $\frac{k-1}{2}-(\ell-1)+s$ & $\alpha_p\beta_p^{-2}$\\ $e_2-e_3$ & $-1$ & $-2$ & $-\frac{k-1}{2}-(\ell-1)+s$ & $\alpha_p^{-1}\beta_p^{-2}$\\ $2e_1-e_0$ & $1$ & $0$ & $\frac{k-1}{2}+s$ & $\alpha_p$\\ $2e_2-e_0$ & $-1$ & $0$ & $-\frac{k-1}{2}+s$ & $\alpha_p^{-1}$\\ $e_1+e_3-e_0$ & $1$ & $2$ & $\frac{k-1}{2}+(\ell-1)+s$ & $\alpha_p\beta_p^2$\\ $e_2+e_3-e_0$ & $-1$ & $2$ & $-\frac{k-1}{2}+(\ell-1)+s$ & $\alpha_p^{-1}\beta_p^2$\\$e_1+e_2-e_0$ & $0$ & $0$ & $2s$ & $1$\\\hline \end{tabular}
\vskip10pt
Using the table, $L_{\Sigma}(s,\Pi,r_1)=L_{\Sigma}(s+(\ell-1)+\frac{k-1}{2},\Sym^2(g)\otimes f)$ and $L_{\Sigma}(s,\Pi,r_2)=\zeta_{\Sigma}(s)$. For $\lambda+s\tilde{\alpha}$ to be algebraically integral, we need $s\in \frac{1}{2}+\ZZ$. We have
$$\lambda+s\tilde{\alpha}=\frac{k-1}{2}(e_1-e_2)+\frac{\ell-1}{2}(2e_3-e_0)+s(e_1+e_2-e_0)$$
$$=\left(\frac{k-1}{2}+s\right)e_1+\left(s-\frac{k-1}{2}\right)e_2+(\ell-1)e_3-\left(s+\frac{\ell-1}{2}\right)e_0.$$
Note that $\nu_1e_1+\nu_2e_2+\nu_3e_3-\left(\frac{\nu_1+\nu_2+\nu_3}{2}\right)e_0$ is strictly dominant (with respect to $\Delta_G$) if and only if $\nu_1>\nu_2>\nu_3>0$. We must apply elements of $W_G$ to make this happen.

{\bf Case 1: $\frac{k-1}{2}>\ell-1$.}
It is easy to find the right $w$, exchanging $e_2$ and $e_0-e_2$ while fixing $e_0,e_1$ and $e_3$. Then
$$w(\lambda+s\tilde{\alpha})=\left(\frac{k-1}{2}+s\right)e_1+\left(\frac{k-1}{2}-s\right)e_2+(\ell-1)e_3-\left(\frac{k-1}{2}+\frac{\ell-1}{2}\right)e_0.$$
As expected, this is strictly dominant for $0<s<b_1=\frac{k-1}{2}-(\ell-1)$. A genus $3$ Siegel cuspidal Hecke eigenform gives a unitary, cuspidal, automorphic representation of $\GSp_3(\A)$ whose component at $\infty$ has infinitesimal character $(a+3)e_1+(b+2)e_2+(c+1)e_3-\left(\frac{a+b+c+6}{2}\right)e_0$, where $a\geq b\geq c$, and $(a-b,b-c,c+4)$ is the ``weight'' of the form \cite[\S\S3,4]{BFvdG1}. So we put $a+3=\frac{k-1}{2}+s$, $b+2=\frac{k-1}{2}-s$ and $c+1=\ell-1$, i.e. $k=a+b+6, \ell=c+2$, $s=\frac{a-b+1}{2}$ (necessitating $a\equiv b\pmod{2})$. Excluding $s=\frac{1}{2}$ excludes $a=b$. We find then that $L_{\Sigma}(1+2s,\Pi,r_2)=\zeta_{\Sigma}(a-b+2)$ and $L_{\Sigma}(1+s,\Pi,r_1)=L_{\Sigma}(\Sym^2(g)\otimes f,a+c+5)$.

Dual to the basis $\{e_1,e_2,e_3,e_0\}$ of $X^*(T)$ is a basis $\{f_1,f_2,f_3,f_0\}$ of $X_*(T)$, with $f_1: t\mapsto\diag(t,1,1,t^{-1},1,1)$, $f_2:t\mapsto\diag(1,t,1,1,t^{-1},1)$, $f_3:t\mapsto\diag(1,1,t,1,1,t^{-1})$ and $f_0: t\mapsto\diag(1,1,1,t,t,t)$. If we view $f_1+f_2+f_3+f_0$ as a character of $\hat{T}$, one easily checks, as in \S 6, that it is minuscule, with $W_G$-orbit as in the table below. It is the highest weight of the $8$-dimensional spinor representation of $\hat{G}$.
Using $\chi_p+s\tilde{\alpha}=-\log_p(\alpha_p)(e_1-e_2)-\log_p(\beta_p)(2e_3-e_0)+\frac{a-b+1}{2}(e_1+e_2-e_0)$, we find
\vskip10pt
\begin{tabular}{|c|c|}\hline $\mu$ & $|(\chi_p+s\tilde{\alpha})(\mu(p))|_p$ \\ \hline $f_1+f_2+f_3+f_0$ & $\beta_pp^{-\frac{a-b+1}{2}}$\\ $f_1+f_2+f_0$ & $\beta_p^{-1}p^{-\frac{a-b+1}{2}}$\\$f_1+f_3+f_0$ & $\alpha_p\beta_p$\\$f_1+f_0$ & $\alpha_p\beta_p^{-1}$\\$f_2+f_3+f_0$ & $\alpha_p^{-1}\beta_p$\\$f_2+f_0$ & $\alpha_p^{-1}\beta_p^{-1}$\\$f_3+f_0$ & $\beta_pp^{\frac{a-b+1}{2}}$\\$f_0$ & $\beta_p^{-1}p^{\frac{a-b+1}{2}}$\\ \hline \end{tabular}
\vskip10pt
The trace is $(\beta_p+\beta_p^{-1})(\alpha_p+\alpha_p^{-1})+p^{(a-b+1)/2}+p^{-(a-b+1)/2}$. Multiplying by $p^{\langle (a+3)e_1+(b+2)e_2+(c+1)e_3-\frac{a+b+c+6}{2}e_0,f_1+f_2+f_3+f_0\rangle}=p^{(a+b+c+6)/2}=p^{(k-1)/2}p^{(\ell-1)/2}$, we find that
$$T_{f_1+f_2+f_3+f_0}(\Ind_P^G(\Pi_p\otimes |s\tilde{\alpha}|_p))=a_p(g)(a_p(f)+p^{a+3}+p^{b+2}).$$ Note that $(f_1+f_2+f_3+f_0)(p)=\diag(p,p,p,1,1,1)$.

Suppose that $q>2\max\langle\lambda,\check{\gamma}\rangle +1=2\left(\frac{k-1}{2}+(\ell-1)\right)+1=k+2\ell-2=a+b+2c+8$, and that
$$\ord_{\q}\left(\frac{L(\Sym^2(g)\otimes f,a+c+5)}{\Omega}\right)>0,$$
where $\q$ is a divisor of $q$ in a sufficiently large number field. Looking at Conjecture~\ref{main}, in the case $i=1$, if $f$ and $g$ are of level $1$ and $\Sigma=\emptyset$, if $\tilde{\Pi}$ is the automorphic representation attached to a cuspidal Hecke eigenform $F$ (necessarily of level $1$) of weight $(a-b,b-c,c+4)$, this becomes Conjecture 10.10 in \cite{BFvdG1}, which they worked out in collaboration with Harder. See \cite{H5} for a route to the conjecture (and that in Case 2) somewhat different from the above. They showed that a congruence of the right shape holds for $p\leq 17$, with $(a,b,c)=(13,11,10)$ and $q=199$. The norm of the ratio of the $L$-value to another (making the problematic periods cancel) was approximated numerically by A. Mellit, who found $199$ in the numerator.

{\bf Case 2: $\ell-1>\frac{k-1}{2}$.}
Recall that $$\lambda+s\tilde{\alpha}=\left(\frac{k-1}{2}+s\right)e_1+\left(s-\frac{k-1}{2}\right)e_2+(\ell-1)e_3-\left(s+\frac{\ell-1}{2}\right)e_0.$$
This time we choose $w:e_1\mapsto e_2, e_2\mapsto e_0-e_3, e_3\mapsto e_1$ and $e_0\mapsto e_0$, to get
$$w(\lambda+s\tilde{\alpha})=(\ell-1)e_1+\left(\frac{k-1}{2}+s\right)e_2+\left(\frac{k-1}{2}-s\right)e_3-\left(\frac{\ell-1}{2}+\frac{k-1}{2}\right)e_0.$$
Now $a+3=\ell-1$, $b+2=\frac{k-1}{2}+s$, $c+1=\frac{k-1}{2}-s$, i.e. $\ell=a+4$, $k=b+c+4$, $s=\frac{b-c+1}{2}$. We need $0<s<\ell-1-\frac{k-1}{2}$, with $s\in \frac{1}{2}+\ZZ$ (so $b\equiv c\pmod{2}$). We find then that $L_{\Sigma}(1+2s,\Pi,r_2)=\zeta_{\Sigma}(b-c+2)$ and $L_{\Sigma}(1+s,\Pi,r_1)=L_{\Sigma}(\Sym^2(g)\otimes f,a+b+6)$. By the same method as in Case 1, one finds that $$T_{f_1+f_2+f_3+f_0}(\Ind_P^G(\Pi_p\otimes |s\tilde{\alpha}|_p))=a_p(g)(a_p(f)+p^{b+2}+p^{c+1}).$$ This time Conjecture \ref{main} (for $i=1$, $f$ and $g$ of level $1$ and $\Sigma=\emptyset$) becomes Conjecture 10.8 in \cite{BFvdG1}. They confirmed (for $p\leq 17$) seventeen congruences of this shape, including for instance $(a,b,c)=(12,6,2)$ and $q=101$. Again, the right primes were found numerically in ratios of $L$-values by Mellit, and this was supported also by algebraic calculations of the $L$-values, using triple product $L$-functions, see \cite[Table 3]{IKPY}.

To exclude $s=\frac{1}{2}$ we need to exclude $b=c$. In the case $b=c$, according to \cite[Conjecture 7.7(ii)]{BFvdG1} there should exist a ``lift'', a genus $3$ Siegel cuspidal eigenform of weight $(a-b,0,b+4)$ (and level $1$), with eigenvalue of $T_{f_1+f_2+f_3+f_0}$ actually equal to $a_p(g)(a_p(f)+p^{b+2}+p^{c+1})$. Then $\q$ as above should be the modulus of a congruence of Hecke eigenvalues between this lift and some non-lift cuspidal eigenform in the same space. In the scalar-valued case $a=b=c$ (and all even), such a lift was conjectured by Miyawaki \cite{Miy}, and its existence proved by Ikeda \cite{Ik} (with $f$, $g$ of level $1$). Ibukiyama, Katsurada, Poor and Yuen {\em proved} two instances of such congruences in \cite[\S 5]{IKPY}, with $(\ell,k,q)=(16,28,107)$ and $(20,36,157)$, using pullback formulas. They also found $\q$ of norm $q$ in the ``algebraic part'' of $L_{\Sigma}(\Sym^2(g)\otimes f,2a+6)$, \cite[Table 2]{IKPY}. One can check that the period they divided by is, locally at $\q$, of the type required in Conjecture \ref{main}. Similarly in Case~1 with $a=b$, there should be a lift, according to \cite[Conjecture 7.7(iii)]{BFvdG1}, again generalising a conjecture of Miyawaki in the scalar valued case, and it would be natural to conjecture congruences between lifts and non-lifts.

When $i=2$, let's say with $\Sigma=\emptyset$, we are looking, in Case 1, at large $\q\mid\frac{\zeta(a-b+2)}{\pi^{a-b+2}}$, and we know there will be a level $1$ cuspidal Hecke eigenform $h$ of weight $a-b+2$ such that $a_p(h)\equiv 1+p^{a-b+1}\pmod{\q}$, for all primes $p$. Then $a_p(g)(a_p(f)+p^{a+3}+p^{b+2})\equiv a_p(g)(a_p(f)+p^{b+2}a_p(h))$, which would be the Hecke eigenvalue of a conjectural endoscopic lift with standard $L$-function $L(\Sym^2(g),s+c+1)L(f\otimes h, s+a+3)$. This endoscopic lift would supply the $\tilde{\Pi}$ required by Conjecture \ref{main}. It should exist as an automorphic representation, but without holomorphic vectors. Its contribution to cohomology appears in \cite[Conjecture 7.12]{BFvdG1}, as the second of the two terms.

In Case 2 we are looking at large $\q\mid\frac{\zeta(b-c+2)}{\pi^{b-c+2}}$, and we know there will be a level $1$ cuspidal Hecke eigenform $h$ of weight $b-c+2$ such that $a_p(h)\equiv 1+p^{b-c+1}\pmod{\q}$, for all primes $p$. Then $a_p(g)(a_p(f)+p^{b+2}+p^{c+1})\equiv a_p(g)(a_p(f)+p^{c+1}a_p(h))$, which should be the Hecke eigenvalue of an endoscopic lift corresponding to the other term in \cite[Conjecture 7.12]{BFvdG1}, this time with standard $L$-function $L(\Sym^2(g),s+a+3)L(f\otimes h, s+b+2)$ (and appearing also in \cite[Conjecture 10.12]{BFvdG1}).

\section{Example: $G=\GSp_3$, $M\simeq \GL_1\times\GSp_2$.}
Now choose $\alpha=e_1-e_2$, so $\Delta_M=\{e_2-e_3, 2e_3-e_0\}$, $\Phi_N=\Phi_N^1=\{e_1-e_2, e_1-e_3, e_1+e_2-e_0, e_1+e_3-e_0, 2e_1-e_0\}$, $\rho_P=3e_1-\frac{3}{2}e_0$, $\langle\rho_P,\check{\alpha}\rangle=3$ and $\tilde{\alpha}=e_1-\frac{1}{2}e_0$. We have a Levi subgroup $M\simeq \GL_1\times\GSp_2$, with
$$\left(a,\begin{pmatrix}A & B\\C & D\end{pmatrix}\right)\mapsto \begin{pmatrix} a & 0 & 0 & 0\\0 & A & 0 & B\\0 & 0 & a^{-1}\mu\left(\begin{pmatrix}A & B\\C & D\end{pmatrix}\right) & 0\\0 & C & 0 & D\end{pmatrix}.$$

Let $\Pi=1\times\Pi_F$, where $\Pi_F$ is a unitary, cuspidal, automorphic representation of $\GSp_2(\A)$ associated with a genus $2$ cuspidal Hecke eigenform $F$, of weight $\Sym^j\otimes\det^k$. At an unramified $p$, let $\alpha_0, \alpha_2,\alpha_3$ be the Satake parameters for $\Pi_F$, then $\lambda=(j+k-1)e_2+(k-2)e_3-\frac{j+2k-3}{2}e_0$ and $\chi_p=-[\log_p(\alpha_2)e_2+\log_p(\alpha_3)e_3+\log_p(\alpha_0)e_0]$. Note that $\alpha_0^2\alpha_2\alpha_3=1$.
\vskip10pt
\begin{tabular}{|c|c|c|c|c|}\hline $\gamma\in\Phi_N$ & $\langle\lambda+s\tilde{\alpha},\check{\gamma}\rangle$ & $|\chi_p(\check{\gamma}(p))|_p$\\\hline $e_1-e_2$ & $-(j+k-1)+s$ & $\alpha_2^{-1}$\\ $e_1-e_3$ & $-(k-2)+s$ & $\alpha_3^{-1}$\\$e_1+e_2-e_0$ & $(j+k-1)+s$ & $\alpha_2$\\$e_1+e_3-e_0$ & $(k-2)+s$ & $\alpha_3$\\ $2e_1-e_0$ & $s$ & $1$ \\\hline \end{tabular}
\vskip10pt
Using the table, $L_{\Sigma}(s,\Pi,r)=L_{\Sigma}(s, F, \mathrm{st})$, which is the standard $L$-function. For $\lambda+s\tilde{\alpha}$ to be algebraically integral, we need $s\in \ZZ$. Since there is a $\gamma$ such that $\langle\lambda,\check{\gamma}\rangle=0$, there must also be a parity condition. Consideration of the relation between the standard representation and the exterior square of the spinor representation of $\hat{G}$ shows that it should be $s\in 1+2\ZZ$.
$$\lambda+s\tilde{\alpha}=(j+k-1)e_2+(k-2)e_3-\frac{j+2k-3}{2}e_0+s(e_1-\frac{1}{2}e_0)$$
$$=se_1+(j+k-1)e_2+(k-2)e_3-\frac{j+2k+s-3}{2}e_0.$$
Choose $w:e_1\mapsto e_3\mapsto e_2\mapsto e_1$ and $e_0\mapsto e_0$, then
$$w(\lambda+s\tilde{\alpha})=(j+k-1)e_1+(k-2)e_2+se_3-\frac{j+2k+s-3}{2}e_0,$$
so $(a,b,c)=(j+k-4,k-4,s-1)$. We have $s\in 1+2\ZZ$ and $0<s<k-2$, excluding $s=1$, and $L_{\Sigma}(1+s,\Pi,r)=L_{\Sigma}(c+2,F,\st)$.

Using $\chi_p=-[\log_p(\alpha_2)e_2+\log_p(\alpha_3)e_3+\log_p(\alpha_0)e_0]$, we find
\vskip10pt
\begin{tabular}{|c|c|}\hline $\mu$ & $|(\chi_p+s\tilde{\alpha})(\mu(p))|_p$ \\ \hline $f_1+f_2+f_3+f_0$ & $\alpha_0\alpha_2\alpha_3p^{-s/2}$\\ $f_1+f_2+f_0$ & $\alpha_0\alpha_2p^{-s/2}$\\$f_1+f_3+f_0$ & $\alpha_0\alpha_3p^{-s/2}$\\$f_1+f_0$ & $\alpha_0p^{-s/2}$\\$f_2+f_3+f_0$ & $\alpha_0\alpha_2\alpha_3p^{s/2}$\\$f_2+f_0$ & $\alpha_0\alpha_2p^{s/2}$\\$f_3+f_0$ & $\alpha_0\alpha_3p^{s/2}$\\$f_0$ & $\alpha_0p^{s/2}$\\ \hline \end{tabular}
\vskip10pt
The trace is $(\alpha_0+\alpha_0\alpha_2+\alpha_0\alpha_3+\alpha_0\alpha_2\alpha_3)p^{-s/2}(1+p^s)$. Multiplying by
$$p^{(a+b+c+6)/2}=p^{\frac{j+2k-3+s}{2}},$$
we find that
$$T_{f_1+f_2+f_3+f_0}(\Ind_P^G(\Pi_p\otimes |s\tilde{\alpha}|_p))=T(p)(\Pi_F)(1+p^{c+1}),$$
which is the Hecke eigenvalue for a holomorphic genus $3$ Klingen-Eisenstein series attached to $F$, so as in \S 6, we are looking at congruences between Klingen-Eisenstein series and cusp forms.
Suppose that $q>2\max\langle\lambda,\check{\gamma}\rangle +1=2(j+k-1)+1$, equivalently $q>2(j+k)$, and that
$$\ord_{\q}\left(\frac{L(c+2,F,\st)}{\Omega}\right)>0,$$
where $\q$ is a divisor of $q$ in a sufficiently large number field. Looking at Conjecture \ref{main}, if $F$ is level $1$ and $\Sigma=\emptyset$, if $\tilde{\Pi}$ is the automorphic representation attached to a cuspidal Hecke eigenform $G$ (necessarily of level $1$) of weight $(a-b,b-c,c+4)$, \cite[Table 5]{BFvdG1} gives $12$ experimental congruences of this shape, for various $q$, but without the link to $L(c+2,F,\st)$ (or any conjecture about where the modulus comes from). We can make this link in three cases, namely $(a,b,c)=(15,5,4)$ with $q=29$, $(a,b,c)=(10,6,4)$ with $q=41$, and $(a,b,c)=(16,16,16)$ with $q=691$.

In the first two cases, $q$ features in Harder's conjecture (see \S 7), for $(j,k)=(a-b,b+4)$, and
$$\ord_{q}\left(\frac{L(f,j+k)}{\Omega'}\right)>0,$$
for $f$ of weight $k'=j+2k-2$. In \cite{DIK} it is explained, using the Bloch-Kato conjecture, how this should lead to divisibility by $\q$ of $\frac{L(j/2+1,F,\st)}{\Omega}$ (Conjecture 5.4, when $k'/2$ and $j/2$ are odd and $(j/2)+1\leq k-2$) and of $\frac{L(j+2,F,\st)}{\Omega}$ (Conjecture 5.3, when $j\leq k-4$). The numbers $(j/2)+1$ and $j+2$ coincide with $c+2$ in these first two examples, for $q=29$ and $q=41$ respectively. (Strictly speaking, though $29>k'$, $29\not >2(j+k)$.) For the $q=41$ case, the divisibility by $q$, of $\frac{\pi^6L(6,F,\st)}{L(8,F,\st)}$, is actually proved \cite[Corollary 7.12]{DIK}. In the case $(a,b,c)=(16,16,16)$, $q=691$, the divisibility is proved in Example 3 at the end of \cite{KM}.

A further example, which appears however in \cite[Table 4]{BFvdG1}, is $(a,b,c)=(15,5,4)$ with $q=691$. Here the congruence is of the form
$$T_{f_1+f_2+f_3+f_0}(\Pi_G)\equiv (a_p(f)+p^{b+2}a_p(g))(1+p^{c+1})\pmod{\q},$$ with $f$ and $g$ cuspidal Hecke eigenforms of level $1$ and with weights $a+b+6$ and $a-b+2$ respectively ($26$ and $12$ in this example, and the congruence is checked using norms). If we let $\Pi'$ be the endoscopic lift of $\Pi_f$ and $\Pi_g$ (which does not have a holomorphic vector $F$) then this congruence is of the same type as above, since $T(p)(\Pi')=a_p(f)+p^{b+2}a_p(g)$. Now $L(s,\Pi',\st)=\zeta(s)L(f\otimes g,s+a+3)$, so with $a=15$ and $s+1=c+2=6$, it suffices to show that $691$ divides a suitably normalised $L(f\otimes g,24)$, but this is a consequence of \cite[Theorem 14.2]{Du3}. This example is actually analogous to the first one above. How the Bloch-Kato conjecture leads one to expect the divisibility is explained in \cite[\S\S 8,11]{Du3}.

\section{Example: $G=\GSp_3$, $M\simeq\GL_1\times\GL_3$.}
Now choose $\alpha=2e_3-e_0$, so $\Delta_M=\{e_1-e_2, e_2-e_3\}$, $\Phi_N^1=\{2e_1-e_0,2e_2-e_0,2e_3-e_0\}$, $\Phi_N^2=\{e_1+e_2-e_0, e_1+e_3-e_0, e_2+e_3-e_0\}$, $\tilde{\alpha}=e_1+e_2+e_3-\frac{3}{2}e_0$. We have a Levi subgroup $M\simeq \GL_1\times\GL_3$, with
$$(a,A)\mapsto \begin{pmatrix} A & 0_3\\0_3 & a(A^t)^{-1}\end{pmatrix}.$$

Let $\Pi=1\times\Pi'$, where $\Pi'$ is a unitary, cuspidal, automorphic representation of $\GL_3(\A)$. From now on we look only at the case that $\Pi'$ is the symmetric square lifting of $\Pi_f$, where $f$ is a cuspidal Hecke eigenform of genus $1$ and weight $k$, trivial character, with $a_p(f)=p^{(k-1)/2}(\alpha_p+\alpha_p^{-1})$. (Actually, when the level is $1$, this is the only possibility, according to \cite[Conjecture 1.1]{AP}.) Then $\lambda=(k-1)(e_1-e_3)$ and $\chi_p=-\log_p(\alpha_p)(2e_1-2e_3)$.
\vskip10pt
\begin{tabular}{|c|c|c|}\hline $\gamma\in\Phi_N$ & $\langle\lambda+s\tilde{\alpha},\check{\gamma}\rangle$ & $|\chi_p(\check{\gamma}(p))|_p$\\\hline $2e_1-e_0$ & $s+(k-1)$ & $\alpha_p^2$\\ $2e_2-e_0$ & $s$ & $1$\\ $2e_3-e_0$ & $s-(k-1)$ & $\alpha_p^{-2}$\\ $e_1+e_2-e_0$ & $2s+(k-1)$ & $\alpha_p^2$\\ $e_1+e_3-e_0$ & $2s$ & $1$ \\ $e_2+e_3-e_0$ & $2s-(k-1)$ & $\alpha_p^{-2}$\\\hline \end{tabular}
\vskip10pt
Using the table, $L_{\Sigma}(s,\Pi,r_1)=L_{\Sigma}(s,\Pi,r_2)=L_{\Sigma}(\Sym^2 f,s+(k-1))$ For $\lambda+s\tilde{\alpha}$ to be algebraically integral, we need $s\in \ZZ$. We have $L(1+s,\Pi,r_1)$ critical for $0<s<k-1$ with $s\in 2\ZZ$, $L(1+2s,\Pi,r_2)$ critical for $0<s<\frac{k-1}{2}$ with $s\in \ZZ$, both critical for $0<s<\frac{k-1}{2}$ with $s\in 2\ZZ$
$$\lambda+s\tilde{\alpha}=(k-1+s)e_1+se_2+(s-(k-1))e_3-\frac{3}{2}se_0.$$

Choosing $w$ appropriately,
$$w(\lambda+s\tilde{\alpha})=(k-1+s)e_1+(k-1-s)e_2+se_3-(k-1+(s/2))e_0,$$
which is strictly dominant for $0<s<\frac{k-1}{2}$. We recognise it as the infinitesimal character of $\Pi_G$ with $(a,b,c)=(k-1+s-3, k-1-s-2,s-1)$.
The requirement that $s\in 2\ZZ$ has the desirable effect that $a+b+c$ is even, which is necessary to avoid the space of genus $3$ cuspforms being trivial \cite[Remark 4.2]{BFvdG1}.

Using $\chi_p=-\log_p(\alpha_p)(2e_1-2e_3)$, we find
\vskip10pt
\begin{tabular}{|c|c|}\hline $\mu$ & $|(\chi_p+s\tilde{\alpha})(\mu(p))|_p$ \\ \hline $f_1+f_2+f_3+f_0$ & $p^{-3s/2}$\\ $f_1+f_2+f_0$ & $\alpha_p^2p^{-s/2}$\\$f_1+f_3+f_0$ & $p^{-s/2}$\\$f_1+f_0$ & $\alpha_p^2p^{s/2}$\\$f_2+f_3+f_0$ & $\alpha_p^{-2}p^{-s/2}$\\$f_2+f_0$ & $p^{s/2}$\\$f_3+f_0$ & $\alpha_p^{-2}p^{s/2}$\\$f_0$ & $p^{3s/2}$\\ \hline \end{tabular}
\vskip10pt
The trace is $p^{-3s/2}+p^{3s/2}+(\alpha_p^2+1+\alpha_p^{-2})(p^{-s/2}+p^{s/2})$. Multiplying by
$$p^{(a+b+c+6)/2}=p^{k-1+(s/2)},$$
we find that
$$T_{f_1+f_2+f_3+f_0}(\Ind_P^G(\Pi_p\otimes |s\tilde{\alpha}|_p))=p^{k-1-s}+p^{k-1+2s}+(a_p(f)^2-p^{k-1})(1+p^s)$$
$$=p^{b+2}+p^{a+c+4}+(a_p(f)^2-p^{(a+b+5)/2})(1+p^{c+1}).$$

The smallest example (with level $1$ and $\Sigma=\emptyset$) where we might hope to test the congruence is $k=16$, $s=4$, $q=2243$, so $(a,b,c)=(16,9,3)$. Unfortunately, the dimension of the space of genus $3$ cuspforms of this type has dimension $4$, which is prohibitively large.

\section{Example: $G=G_2$, omitting the short root.}
Let $G$ be the Chevalley group of type $G_2$. Then $\Delta_G=\{\alpha,\beta\}$, and $\Phi_G^+=\{\alpha,\beta,\beta+\alpha,\beta+2\alpha,\beta+3\alpha,2\beta+3\alpha\}$. Of these, $\alpha,\beta+\alpha$ and $\beta+2\alpha$ are short, while $\beta, \beta+3\alpha$ and $2\beta+3\alpha$ are long. We have $\langle\alpha,\check{\beta}\rangle=-1$ and $\langle\beta,\check{\alpha}\rangle=-3$ (and of course $\langle\alpha,\check{\alpha}\rangle=\langle\beta,\check{\beta}\rangle=2$), also, $\rho_G=5\alpha+3\beta$.

In this section we omit $\alpha$, so $\Delta_M=\{\beta\}$ and it is known that $M\simeq\GL_2$. We have $\Phi_N=\Phi_G^+-\{\beta\}$, $\rho_P=5\alpha+\frac{5}{2}\beta$, $\langle\rho_P,\check{\alpha}\rangle=5/2$ and $\tilde{\alpha}=2\alpha+\beta$.
Let $f$ be a cuspidal Hecke eigenform of weight $k$ and trivial character, with $a_p(f)=p^{(k-1)/2}(\alpha_p+\alpha_p^{-1})$, and let $\Pi$ be the corresponding unitary, cuspidal, automorphic representation of $\GL_2\simeq M$. Then we have $\lambda=\left(\frac{k-1}{2}\right)\beta$ and $\chi_p=-\log_p(\alpha_p)\beta$ (at any unramified prime $p$).
\vskip10pt
\begin{tabular}{|c|c|c|c|}\hline $\gamma\in\Phi_N$ & $\check{\gamma}$ & $\langle\lambda+s\tilde{\alpha},\check{\gamma}\rangle$ & $|\chi_p(\check{\gamma}(p))|_p$\\\hline $\alpha$ & $\check{\alpha}$ & $-\frac{3}{2}(k-1)+s$ & $\alpha_p^{-3}$\\$\beta+\alpha$ & $\check{\alpha}+3\check{\beta}$ & $\frac{3}{2}(k-1)+s$ & $\alpha_p^3$\\$\beta+3\alpha$ & $\check{\alpha}+\check{\beta}$ & $-\frac{1}{2}(k-1)+s$ & $\alpha_p^{-1}$\\ $2\beta+3\alpha$ & $\check{\alpha}+2\check{\beta}$ & $\frac{1}{2}(k-1)+s$ & $\alpha_p$ \\$\beta+2\alpha$ & $2\check{\alpha}+3\check{\beta}$ & $2s$ & $1$\\\hline \end{tabular}
\vskip10pt
Using the table, $L_{\Sigma}(s,\Pi,r_1)=L_{\Sigma}(\Sym^3 f,s+\frac{3}{2}(k-1))$ and $L_{\Sigma}(s,\Pi,r_2)=\zeta_{\Sigma}(s)$. For $\lambda+s\tilde{\alpha}$ to be algebraically integral, we need $s\in \frac{1}{2}+\ZZ$ and
$$\lambda+s\tilde{\alpha}=\left(\frac{k-1}{2}\right)\beta+s(\beta+2\alpha).$$
We have $L(1+s,\Pi,r_1)$ critical for $0<s<\frac{k-1}{2}$ with $s\in \frac{1}{2}+\ZZ$, in which case $L(1+2s,\Pi,r_2)$ is also critical.
Choose $w:\beta\mapsto 2\beta+3\alpha$ and $2\alpha+\beta\mapsto\alpha$. This is a rotation clockwise through $\pi/3$. Then
$$w(\lambda+s\tilde{\alpha})=\left(\frac{k-1}{2}\right)(2\beta+3\alpha)+s\alpha=(k-1)\beta+((3/2)(k-1)+s)\alpha,$$
which is strictly dominant, since $\langle w(\lambda+s\tilde{\alpha}),\check{\alpha}\rangle=2s>0$ and $\langle w(\lambda+s\tilde{\alpha}),\check{\beta}\rangle=\frac{1}{2}(k-1)-s>0$. In fact $w(\lambda+s\tilde{\alpha})=k_1\omega_1+k_2\omega_2$ with $k_1=2s$, $k_2=\frac{1}{2}(k-1)-s$, $\omega_1=2\alpha+\beta$ and $\omega_2=3\alpha+2\beta$ the fundamental dominant weights.

Now $\hat{G_2}=G_2$, and its irreducible $7$-dimensional representation has weights $\mu$ as in the table below. Recall that $\chi_p=-\log_p(\alpha_p)\beta$.
\vskip10pt
\begin{tabular}{|c|c|}\hline $\mu$ & $|(\chi_p+s\tilde{\alpha})(\mu(p))|_p$ \\ \hline $\check{\alpha}+2\check{\beta}$ & $\alpha_pp^{-s}$ \\$-(\check{\alpha}+2\check{\beta})$ & $\alpha_p^{-1}p^s$\\ $\check{\alpha}+\check{\beta}$ & $\alpha_p^{-1}p^{-s}$\\$-(\check{\alpha}+\check{\beta})$ & $\alpha_pp^s$\\$\check{\beta}$ & $\alpha_p^2$\\$-\check{\beta}$ & $\alpha_p^{-2}$\\ $0$ & $1$\\\hline \end{tabular}
\vskip10pt
The trace is
$$t:=(\alpha_p+\alpha_p^{-1})(p^{-s}+p^s)+(\alpha_p+\alpha_p^{-1})^2-1.$$
Given $\q$ such that $q>3k-2$ and
$$\ord_{\q}\left(\frac{L_{\Sigma}(\Sym^3 f,1+s+(3/2)(k-1))}{\Omega}\right)>0,$$ or $q>2+2s$ and
$$\ord_{\q}\left(\frac{\zeta_{\Sigma}(1+2s)}{\pi^{1+2s}}\right)>0,$$
let $\tilde{\Pi}$ be the automorphic representation of $G(\A)$ conjectured to exist by Conjecture \ref{main}.
According to Gross and Savin, there should be a functorial lift $\tilde{\Pi}'$ to $\GSp_3(\A)$, with $(a+3,b+2,c+1)=(k_1+2k_2,k_1+k_2,k_2)$ (\cite[Introduction]{GS}), which in our case is $(k-1,\frac{k-1}{2}+s,\frac{k-1}{2}-s)$. (Notice that $a=b+c$ for such a lift.) If we take the Satake parameter for $\tilde{\Pi}'$ at an unramified prime $p$, plug it into the $8$-dimensional spinor representation and take the trace, then it follows from the first equation in \cite[\S 2,(1.8)]{GS} that we get (at least when $q\neq p$) something congruent to $1+t=(\alpha_p+\alpha_p^{-1})(p^{-s}+p^s)+(\alpha_p+\alpha_p^{-1})^2$. Multiplying by $p^{(a+b+c+6)/2}=p^{k-1}$, we find that, in the notation of \S 8, $T_{f_1+f_2+f_3+f_0}(\tilde{\Pi}')$ should be congruent modulo $\q$ to $a_p(f)(a_p(f)+p^{b+2}+p^{c+1})$. We see that $\tilde{\Pi}'$ would satisfy the conjecture in Case 2 of \S 8, in the special case $f=g$. Four of the examples in \cite[Table 3]{BFvdG1} are like this. Note that $L(\Sym^2(f)\otimes f,s)=L(\Sym^3 f,s)L(f,s-(k-1))$, $1+s+(3/2)(k-1)=a+b+6$ and $1+2s=b-c+2$.

\section{Example: $G=G_2$, omitting the long root.}
In this section, in a departure from the usual notation, we omit $\beta$, so $\Delta_M=\{\alpha\}$ and again it is known that $M\simeq\GL_2$. Then $\Phi_N=\Phi_G^+-\{\alpha\}$, $\rho_P=(9/2)\alpha+3\beta$, $\langle\rho_P,\check{\beta}\rangle=3/2$ and $\tilde{\beta}=2\beta+3\alpha$.
Let $f$ be a cuspidal Hecke eigenform of weight $k'$ and trivial character, with $a_p(f)=p^{(k'-1)/2}(\alpha_p+\alpha_p^{-1})$, and let $\Pi$ be the corresponding unitary, cuspidal, automorphic representation of $\GL_2\simeq M$. Then we have $\lambda= ((k'-1)/2)\alpha $ and $\chi_p=-\log_p(\alpha_p)\alpha$ (at any unramified prime $p$).
\vskip10pt
\begin{tabular}{|c|c|c|c|}\hline $\gamma\in\Phi_N$ & $\check{\gamma}$ & $\langle\lambda+s\tilde{\beta},\check{\gamma}\rangle$ & $|\chi_p(\check{\gamma}(p))|_p$\\\hline $\beta$ & $\check{\beta}$ & $-\frac{k'-1}{2}+s$ & $\alpha_p^{-1}$\\$\beta+3\alpha$ & $\check{\alpha}+\check{\beta}$ & $\frac{k'-1}{2}+s$ & $\alpha_p$\\$2\beta+3\alpha$ & $\check{\alpha}+2\check{\beta}$ & $2s$ & $1$\\$\beta+\alpha$ & $\check{\alpha}+3\check{\beta}$ & $-\frac{k'-1}{2}+3s$ & $\alpha_p^{-1}$\\$\beta+2\alpha$ & $2\check{\alpha}+3\check{\beta}$ & $\frac{k'-1}{2}+3s$ & $\alpha_p$\\\hline \end{tabular}
\vskip10pt
Using the table, $L_{\Sigma}(s,\Pi,r_1)=L_{\Sigma}(s,\Pi,r_3)=L_{\Sigma}(f,s+\frac{k'-1}{2})$ and $L_{\Sigma}(s,\Pi,r_2)=\zeta_{\Sigma}(s)$. For $\lambda+s\tilde{\alpha}$ to be algebraically integral, we need $s\in \frac{1}{2}+\ZZ$ and
$$\lambda+s\tilde{\beta}=\left(\frac{k'-1}{2}\right)\alpha+s(2\beta+3\alpha).$$
We have $L(1+s,\Pi,r_1)$ critical for $0<s<\frac{k'-1}{2}$ (with $s\in \frac{1}{2}+\ZZ$), in which case $L(1+2s,\Pi,r_2)$ is also critical, but to get $L(1+3s,\Pi,r_3)$ critical too, we need $0<s<\frac{k'-1}{6}$.
Choose $w:\alpha\mapsto \beta+2\alpha$ and $3\alpha+2\beta\mapsto\beta$. This is a rotation anticlockwise through $\pi/3$. Then
$$w(\lambda+s\tilde{\beta})=\left(\frac{k'-1}{2}\right)(\beta+2\alpha)+s\beta=(k-1)\alpha+((1/2)(k'-1)+s)\beta,$$
which is strictly dominant, since $\langle w(\lambda+s\tilde{\beta}),\check{\alpha}\rangle=\frac{1}{2}(k'-1)-3s>0$ and $\langle w(\lambda+s\tilde{\beta}),\check{\beta}\rangle=2s>0$. In fact $w(\lambda+s\tilde{\alpha})=k_1\omega_1+k_2\omega_2$ with $k_1=\frac{1}{2}(k'-1)-3s$, $k_2=2s$.

This time, with $\chi_p=-\log_p(\alpha_p)\alpha$, we get the following table.
\vskip10pt
\begin{tabular}{|c|c|}\hline $\mu$ & $|(\chi_p+s\tilde{\beta})(\mu(p))|_p$ \\ \hline $\check{\alpha}+2\check{\beta}$ & $p^{-2s}$  \\$-(\check{\alpha}+2\check{\beta})$ & $p^{2s}$\\ $\check{\alpha}+\check{\beta}$ & $\alpha_pp^{-s}$\\$-(\check{\alpha}+\check{\beta})$ & $\alpha_p^{-1}p^s$ \\$\check{\beta}$ & $\alpha_p^{-1}p^{-s}$\\$-\check{\beta}$ & $\alpha_pp^s$\\ $0$ & $1$ \\\hline \end{tabular}
\vskip10pt
The trace is
$$t:=(p^{-s}+p^s)(\alpha_p+\alpha_p^{-1}+p^{-s}+p^s)-1.$$
This time $(a+3,b+2,c+1)=(k_1+2k_2,k_1+k_2,k_2)=(\frac{1}{2}(k'-1)+s, \frac{1}{2}(k'-1)-s, 2s)$ (so $c$ must be even).
In the same manner as the previous section, if (case $i=1$),
$$\ord_{\q}\left(\frac{L(f,a+4)}{\Omega^{\pm}}\right)>0$$
with $q>k'$, or (case $i=2$)
$$\ord_{\q}\left(\frac{\zeta_{\Sigma}(c+2)}{\pi^{c+2}}\right)>0$$
with $q>c+3$, or (case $i=3$),
$$\ord_{\q}\left(\frac{L(f,a+c+5)}{\Omega^{\pm}}\right)>0$$
with $q>k'$, then we expect $\tilde{\Pi}'$ for $\GSp_3(\A)$, with
$$T_{f_1+f_2+f_3+f_0}(\tilde{\Pi}')\equiv p^{\frac{k-1}{2}+s}(1+t)=(1+p^{c+1})(a_p(f)+p^{b+2}+p^{a+3})\pmod{\q}.$$
Note that $k'=a+b+6$.

First we look at $i=1$. If we choose $j,k$ such that $b+2=k-2$ and $a+3=j+k-1$, then
$a_p(f)+p^{b+2}+p^{a+3}=a_p(f)+p^{k-2}+p^{j+k-1}$, and moreover, since $a=b+c$ we have $a+4=j+k$. So according to \S 7 (i.e. Harder's conjecture)
there should be a cuspidal Hecke eigenform $F$ of genus $2$, weight $\Sym^j\otimes\det^k$, with $T(p)(F)\equiv a_p(f)+p^{b+2}+p^{a+3}\pmod{\q}$.
Then the congruence with right-hand-side $(1+p^{c+1})(a_p(f)+p^{b+2}+p^{a+3})$ becomes an instance of that in \S 9, and the argument for why $\q$ should divide $\frac{L(j+2,F,\st)}{\Omega}$ is the same, connected with \cite[Conjecture 5.3]{DIK}. (Note that the condition $j\leq k-4$ follows from $c\leq b$.)

Next consider the case $i=2$. Recalling the case $G=\GL_2$, there should be a Hecke eigenform $g$, of weight $\ell=c+2$, satisfying $a_p(g)\equiv 1+p^{c+1}\pmod{\q}$ for $p\notin\Sigma$. Then $\tilde{\Pi}'$ would satisfy the $i=2$ case of the congruence in Case 1 of \S 8, with right hand side $a_p(g)(a_p(f)+p^{b+2}+p^{a+3})$. (Note that $c+2=a-b+2$, because $a=b+c$.) Looking at it yet another way (as might be suggested by the previous paragraph and the case $i=2$ in \S 7), $(1+p^{c+1})(a_p(f)+p^{b+2}+p^{a+3})\equiv (1+p^{c+1})(a_p(f)+p^{b+2}a_p(g))\pmod{\q}$, so we have the type of congruence discussed at the end of \S 9, but this time the first factor in the product $L(s,\Pi',\st)=\zeta(s)L(f\otimes g,s+a+3)$ is the relevant one.

An $i=3$ example where we expect a congruence is $(a,b,c)=(10,8,2)$, $k'=24$, $q=179$ (with level $1$, $\Sigma=\emptyset$). The space of genus $3$ cusp forms for level $1$ and $(a,b,c)=(10,8,2)$ is $1$-dimensional, and we checked the congruence for $p\leq 17$, using Hecke eigenvalues calculated as in \cite{BFvdG1}. Moreover, it appears that we can relax the condition $a=b+c$ and expect the same kind of congruence to hold. We found congruences in the following examples (checked for $p\leq 17$). Again, in each case the space of genus $3$ cusp forms is $1$-dimensional. The divisibility of $\frac{L(f,a+c+5)}{\Omega^{\pm}}$ may be checked in almost all cases using the final table of \cite{vdG}. (Strictly speaking, in those cases where the field of coefficients is not rational, we did not check that the congruence and the divisibility involve the same divisor of $q$, except in the case $q=179$.) For the $q=43$ example we used the computer package Magma instead, and for those cases where $q\not >k'$ (i.e. $q=19$ and $q=37$) the divisibility is not really well-defined, so we did not check, though it may be significant that in the normalisation in \cite{vdG}, the $19$ does occur as a factor.
\vskip10pt
\begin{tabular}{|c|c|c|c|}\hline $(a,b,c)$ & $k'=a+b+6$ & $a+c+5$ & $q$\\ \hline $(12,6,2)$ & $24$ & $19$ & $73$ \\$(10,8,2)$ & $24$ & $17$ & $179$\\ $(16,4,2)$ & $26$ & $23$ & $43$\\ $(14,6,2)$ & $26$ & $21$ & $97$\\ $(12,8,2)$ & $26$ & $19$ & $29$\\ $(10,8,4)$ & $24$ & $19$ & $73$\\ $(13,9,2)$ & $28$ & $20$ & $157$\\ $(13,7,4)$ & $26$ & $22$ & $19$\\ $(10,10,4)$ & $26$ & $19$ & $29$\\$(21,3,2)$ & $30$ & $28$ & $97$\\$(12,8,6)$ & $26$ & $23$ & $43$\\$(10,10,6)$ & $26$ & $21$ & $97$\\$(10,10,8)$ & $26$ & $23$ & $43$\\$(13,11,8)$ & $30$ & $26$ & $593$\\$(13,11,10)$ & $30$ & $28$ & $97$\\$(15,13,12)$ & $34$ & $32$ & $103$\\$(16,16,16)$ & $38$ & $37$ & $37$\\\hline \end{tabular}
\vskip10pt
Notice that there are three triples of $(k',a+c+5,q)$ that appear twice in the table, namely $(24,19,73)$, $(26,21,97)$ and $(30,28,97)$, and $(26,23,43)$ even appears three times.

How are we to account for these congruences when $a\neq b+c$? One way is to use the non-maximal parabolic subgroup $P$ of $\GSp_3$ with Levi subgroup $M\simeq\GL_2\times\GL_1\times\GL_1$ and $\Delta_M=\{e_1-e_2\}$. One uses now two parameters, $s$ and $t$, but replaces $N$ by the intersection of the unipotent radicals of the maximal parabolics containing $P$. Playing this game with non-maximal parabolics is sometimes fruitful, but seems in many cases to produce congruences that do not hold. Further details are omitted here.

In \cite{BFvdG1}, the various congruences involving Siegel cusp forms of level $1$ are viewed as being between Hecke eigenspaces of, on the one hand, parts of the inner cohomology of a Siegel modular variety (with coefficients in a local system) coming from cusp forms, and on the other, ``Eisenstein'' or ``endoscopic'' contributions to the Euler characteristic. They do not actually calculate the Hecke actions on the latter, but for each congruence there is a contribution to the Euler characteristic whose form suggests that it accounts for the congruence.
The above congruences should be accounted for by an ``Eisenstein contribution" ``$s_{a+b+6}L^{b+2}$'' (see \cite[Conjecture 7.12]{BFvdG1}), where $s_{a+b+6}$ is the dimension of the space of cusp forms of weight $a+b+6$ and $L$ is the Lefschetz motive. This term was accidentally overlooked in \cite{BFvdG1}.

Formally, the simplest way to arrive at the congruences is to substitute an Eisenstein series of weight $c+2$ for the cusp form $g$ in the congruence in Case 1 of \S 8.

\section{Cohomology of local systems.}
Useful references are \cite[2.3]{H3},\cite[1.2]{H4}, \cite[2.2,2.3]{H6}. Let
$\MMM$, a finite-dimensional vector space over $\QQ$, be the space
of an irreducible rational representation of $G$, with highest weight $\Lambda_G$.
Let $K_{\infty}$ be the product inside $G(\RR)$ of the identity components of its maximal compact subgroup and of $Z(G)(\RR)$. Let $K_f$ be an open
compact subgroup of $G(\A_f)$ (where $\A_f$ is the finite adeles).
Let $\SSS_{K_f}=G(\QQ)\backslash G(\A)/K_{\infty}K_f$. This is a
finite union of quotients $\Gamma^{(g_f)}\backslash X_{\infty}$ of
the symmetric domain $X_{\infty}=G(\RR)/K_{\infty}$ by the discrete
groups $\Gamma^{(g_f)}:=G(\QQ)\cap g_f K_fg_f^{-1}$, where $g_f$ is
a set of representatives for the finite set of double cosets
$G(\QQ)\backslash G(\A_f)/K_f$. On $\SSS_{K_f}$ there is a locally
constant sheaf $\MMM$ associated to the representation of the same
name. The direct limit over smaller and smaller $K_f$,
$H^{\bullet}(\SSS,\MMM):=\varinjlim_{K_f}H^{\bullet}(\SSS_{K_f},\MMM)$,
is naturally a $G(\A_f)$-module.

At each level there is a Borel-Serre compactification
$\overline{\SSS_{K_f}}$, a manifold with corners. The sheaf $\MMM$
can naturally be extended to $\overline{\SSS_{K_f}}$ (on which it
has the same cohomology) and restricted to the boundary $\partial
S_{K_f}:=\overline{\SSS_{K_f}}-{\SSS_{K_f}}$. This induces maps
$H^{\bullet}(\SSS_{K_f},\MMM)\rightarrow H^{\bullet}(\partial
\SSS_{K_f},\MMM)$ and $H^{\bullet}(\SSS,\MMM)\rightarrow
H^{\bullet}(\partial \SSS,\MMM)$. The kernel is the ``inner''
cohomology $H^{\bullet}_{!}(\SSS,\MMM)$.

At each level the boundary $\partial \SSS_{K_f}$ is stratified by
submanifolds labelled by conjugacy classes of parabolic subgroups
defined over $\QQ$. By considering the restriction of the sheaf
$\MMM$ to such a boundary stratum, and taking a limit over $K_f$,
one obtains, in particular for a maximal parabolic subgroup $P$, a map
$$I_{P(\A_f)}^{G(\A_f)}H^{\bullet}(\SSS^M,
H^{\bullet}(\mathfrak{n},\MMM))\rightarrow
H^{\bullet}(\partial\SSS,\MMM)$$
of $G(\A_f)$-modules. (This is
ordinary, not unitary, parabolic induction.) Here $\mathfrak{n}$
is the Lie algebra of the unipotent radical of $P$, and the Lie
algebra cohomology is viewed as a representation of the Levi
quotient $M$. The cohomology $H^{\bullet}(\SSS^M,\_\_ )$ is defined
to be a direct limit, over open compact subgroups $C_f$ of
$M(\A_f)$, of $H^{\bullet}(\SSS^M_{C_f},\_\_ )$, where
$\SSS^M_{C_f}=M(\QQ)\backslash M(\A)/K^M_{\infty}C_f$, and
$K^M_{\infty}$ is the intersection of $K_{\infty}$ with $M(\RR)$.

In fact there is an injection (see just before 2.3.2 of \cite{H3})
$$I_{P(\A_f)}^{G(\A_f)}H^{\bullet}_{\mathrm{cusp}}\bigl(\SSS^M,
H^{\bullet}(\mathfrak{n},\MMM)\bigr)\hookrightarrow
H^{\bullet}(\partial\SSS,\MMM)$$ of $G(\A_f)$-modules, where the
cuspidal subspace is analytically defined. Using a theorem of Kostant, we
get a decomposition
$$H^{q}(\mathfrak{n},\MMM)\simeq \bigoplus_{w'\in W^P,\, \ell(w')=q}
F_{\mu_{w'}}$$ of representations of $M$. Here, if $W$ is the Weyl
group of $G$, and $W_M$ the Weyl group of $M$, then $W^P$ is the set
of Kostant representatives of $W_M\backslash W$, i.e. $\{w'\in
W : {w'}^{-1}(\Delta_M)\subset\Phi_G^+\}$, and $\ell(w')$
is the length of $w'$.
We define $\mu_{w'}=w'.\Lambda_G=w'(\Lambda_G+\rho)-\rho$, which happens to
be a highest weight for $M$, and $F_{\mu_{w'}}$ is the irreducible
rational representation of $M$ with that highest weight, so
$$\bigoplus_{w'\in W^P}\Ind_{P(\A_f)}^{G(\A_f)}\,\,H^{q-\ell(w)}_{\mathrm{cusp}}(\SSS^M,
F_{\mu_w})\hookrightarrow H^{q}(\partial\SSS,\MMM).$$

Now consider cuspidal automorphic representations $\Pi$ of $M(\A)$ and $\tilde{\Pi}$ of $G(\A)$, as in Conjecture \ref{main}, with restrictions $\Pi_f$ (note that this notation means something different than above)
to $M(\A_f)$ and $\tilde{\Pi}_f$ to $G(\A_f)$, respectively. In Harder's set-up, one starts not with $\Pi$ but with $\tilde{\Pi}$, and where $\tilde{\Pi}_f$ occurs in $H^q_!(\SSS,\MMM)$, for some $q$ and some $\Lambda_G$, while $\Ind_P^G(\Pi\otimes|s\tilde{\alpha}|)_f$ occurs in $I_{P(\A_f)}^{G(\A_f)}\,\,H^{q-\ell(w')}_{\mathrm{cusp}}(\SSS^M,
F_{\mu_{w'}})$, for some $w'$. (Knowing the possible degrees of cohomology that are relevant, one experiments with $w'$ of the correct length, then works back from $\mu_{w'}$ to deduce the type of $\Pi_{\infty}$.) Harder's congruence is then
between $G(\A_f)$-modules occurring in $H^q_!(\SSS,\MMM)$ and in $H^{q}(\partial\SSS,\MMM)$, in fact it ought to arise from fusion between Hecke modules with integral coefficients, if one uses an integral model for $\MMM$. Fixing the Kostant representative $w'$, let $\Lambda_M:=\mu_{w'}$. Next we try to work out how Harder's approach relates to ours.

First, by Wigner's Lemma \cite[9.4.1]{Wa}, the dual of $\MMM$ has the same infinitesimal character as $\tilde{\Pi}_{\infty}$, so
$$-w_0^G\Lambda_G+\rho_G=w(\lambda+s\tilde{\alpha}).$$
Similarly, the infinitesimal character of the component at $\infty$ of a representation of $M(\A)$ whose finite part occurs in $H^{q-\ell(w')}_{\mathrm{cusp}}(\SSS^M,
F_{\mu_{w'}})$ is $-w_0^M\Lambda_M+\rho_M$. Subtracting $\rho_P$ to take into account the difference between ordinary and unitary induction, we must have $$-w_0^M\Lambda_M+\rho_M-\rho_P=\lambda+s\tilde{\alpha}=w^{-1}(-w_0^G\Lambda_G+\rho_G)=-w^{-1}w_0^G(\Lambda_G+\rho_G).$$
It is then $(\Pi\otimes|s\tilde{\alpha}-\rho_P|)_f=(\Pi\otimes|(s-\langle\rho_P,\check{\alpha}\rangle)\tilde{\alpha}|)_f$ that occurs in $H^{q-\ell(w')}_{\mathrm{cusp}}(\SSS^M,
F_{\mu_{w'}})$.
Applying $-w_0^M$ to both sides,
$$\Lambda_M+\rho_M+\rho_P=w_0^Mw^{-1}w_0^G(\Lambda_G+\rho_G).$$
Since $\rho_M+\rho_P=\rho_G$, this becomes
$$\Lambda_M=w_0^Mw^{-1}w_0^G(\Lambda_G+\rho_G)-\rho_G.$$
Comparing with $\Lambda_M=w'(\Lambda_G+\rho_G)-\rho_G$, we must have the relation
$$w'=w_0^Mw^{-1}w_0^G, \text{ i.e. } w=w_0^G(w')^{-1}w_0^M,$$
between Harder's Kostant representative $w'$ and our $w$, which was chosen so that $w(\lambda+s\tilde{\alpha})$ is strictly dominant for small $s>0$.

The requirement that $\ell(w')=q_G-q_M$, where $q_G$ and $q_M$ are such that
$H^{q_G}_{\mathrm{cusp}}(\SSS,\MMM)$ and $H^{q_M}_{\mathrm{cusp}}(\SSS^M,F_{\mu_{w'}})$ can be non-zero, must tell us something about our $w$.
\begin{lem}\label{length}
If $w'=w_0^Mw^{-1}w_0^G$ then $\ell(w')$ is the number of positive roots in $w(\Phi_N)$.
\end{lem}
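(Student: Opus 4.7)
The plan is to apply the standard length formula
$$\ell(w') \,=\, \bigl|\{\alpha \in \Phi^+ : w'(\alpha) \in \Phi^-\}\bigr|$$
to $w' = w_0^M w^{-1} w_0^G$, exploiting that both $w_0^G$ and $w_0^M$ are involutions in their respective Weyl groups, and that by Lemma \ref{symmetry} the element $w_0^M$ stabilises $\Phi_N$ setwise (being an injection of the finite set $\Phi_N$ into itself).

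I would first re-index the count by setting $\beta := -w_0^G(\alpha)$, which is a bijection $\Phi^+ \to \Phi^+$. Since $(w_0^G)^2 = e$ one computes $w'(\alpha) = -w_0^M w^{-1}(\beta)$, so $w'(\alpha) \in \Phi^-$ if and only if $w^{-1}(\beta) \in w_0^M(\Phi^+)$. Decomposing $\Phi^+ = \Phi_M^+ \sqcup \Phi_N$, and using that $w_0^M$ flips signs on $\Phi_M$ while preserving $\Phi_N$, gives $w_0^M(\Phi^+) = \Phi_M^- \sqcup \Phi_N$. The condition therefore becomes $\beta \in w(\Phi_M^-) \sqcup w(\Phi_N)$, and intersecting with $\Phi^+$ yields
$$\ell(w') \,=\, \bigl|\Phi^+ \cap w(\Phi_M^-)\bigr| \,+\, \bigl|\Phi^+ \cap w(\Phi_N)\bigr|.$$

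The crux is to show the first term vanishes, i.e.\ $w(\Phi_M^+) \subseteq \Phi^+$. Suppose, for contradiction, that some $\beta \in \Phi_M^+$ had $w(\beta) \in \Phi^-$. By $W$-invariance of the pairing,
$$\bigl\langle w(\lambda + s\tilde{\alpha}),\, w(\check\beta)\bigr\rangle \,=\, \langle \lambda + s\tilde{\alpha},\, \check\beta\rangle \,=\, \langle \lambda,\, \check\beta\rangle,$$
where the last equality uses that $\check\beta$ lies in the real span of $\{\check\gamma : \gamma \in \Delta_M\}$ while $\tilde{\alpha}$ is orthogonal to that span. This pairing is strictly positive, because $\lambda$ is strictly $M$-dominant and $\check\beta$ is a non-negative integer combination of elements of $\{\check\gamma : \gamma \in \Delta_M\}$. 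On the other hand, $w(\check\beta) = \check{w(\beta)}$ (compatibility of the Weyl action on roots and coroots), and $w(\beta) \in \Phi^-$ means this coroot is a non-positive integer combination of the simple coroots of $G$. Strict $G$-dominance of $w(\lambda + s\tilde{\alpha})$ (guaranteed by the choice of $w$ in \S 3 for $s>0$ sufficiently small) then forces the pairing to be $\le 0$. This contradiction gives $w(\Phi_M^+) \subseteq \Phi^+$, the first term is zero, and $\ell(w') = |\Phi^+ \cap w(\Phi_N)|$ as claimed.

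The main obstacle is not deep: it is the bookkeeping to simplify $\ell(w')$ via the two involutions in the correct order, together with the compatibility $w(\check\beta) = \check{w(\beta)}$ of the Weyl action on roots and coroots. Once these are in place, the dominance argument eliminating the $\Phi_M$ contribution runs automatically from the setup already established in \S 3.
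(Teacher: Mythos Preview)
Your proof is correct and follows essentially the same route as the paper: rewrite $w' = (-w_0^M)\,w^{-1}\,(-w_0^G)$, use that $-w_0^G$ permutes $\Phi^+$ while $-w_0^M$ fixes $\Phi_M^+$ and sends $\Phi_N$ into $\Phi^-$, and count. You are in fact more careful than the paper on one point: the paper's argument tacitly assumes that $w^{-1}(\beta)$ cannot land in $\Phi_M^-$ for $\beta\in\Phi^+$, i.e.\ that $w(\Phi_M^+)\subseteq\Phi^+$ (equivalently that $w'$ is a Kostant representative), without justification, whereas you supply the explicit dominance argument from the choice of $w$ in \S 3 to establish this.
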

\begin{proof} The length of an element of $W_G$ is the number of elements of $\Phi_G^+$ that it sends to $-\Phi_G^+$.
$$w'=(-w_0^M)w^{-1}(-w_0^G).$$
Now $-w_0^G$ maps $\Phi_G^+$ to $\Phi_G^+$, while $-w_0^M$ maps $\Phi_M^+$ to $\Phi_M^+$, while sending $\Phi_N$ (the remainder of $\Phi_G^+$) to $-\Phi_G^+$. So $\ell(w')$ is the number of elements of $\Phi_G^+$ mapped to $\Phi_N$ by $w^{-1}$, hence the lemma.
\end{proof}

So it should be the case that the number of positive roots in $w(\Phi_N)$ is $q_G-q_M$. It is easy to check this directly in many cases, including some examples below.
According to a theorem of Li and Schwermer \cite[3.5]{LS}, as long as $\Lambda_G$ is regular, $H^q_{\mathrm{cusp}}(\SSS,\MMM)$ can be non-zero at most for $q\in[q_0(G),q_0(G)+\ell_0(G)]$, where $\ell_0(G)=\rank(G)-\rank(K_G)$ ($K_G$ being a maximal compact subgroup of $G(\RR)$) and $q_0(G)+(1/2)\ell_0(G)=(1/2)\dim X_G$, with $X_G=G(\RR)/K_G$. In other words, $q_G$ must lie inside this interval, and is uniquely determined in cases where $\ell_0(G)=0$.

In \S\S 6--9 we could have used the semi-simple group $G=\PGSp_g$, for which $K_G\simeq U(g)$, with $\rank(G)=\rank(K_G)=g$, so $\ell_0(G)=0$. By counting generators of Lie algebras, $\dim G(\RR)=g+g(g-1)+g(g+1)=2g^2+g$, while $\dim K_G=g^2$, so $\dim X_G=g(g+1)$ and $q_G=\frac{g(g+1)}{2}$.
Looking instead at $G=\GL_n$, with $K_G\simeq O(n)$, we have $\ell_0(G)=n-[n/2]$ and $\dim X_G=n^2-\frac{n(n-1)}{2}=\frac{n(n+1)}{2}$, so
$q_0(G)=\begin{cases} \frac{n^2}{4} \text{ if $n$ is even};\\ \frac{n^2-1}{4} \text{ if $n$ is odd.}\end{cases}$.

{\bf Example 1.} Let $G=\PGSp_g$, $\alpha=e_1-e_2$. Then $M\simeq \GL_1\times\PGSp_{g-1}$. If $\lambda=a_2e_2+\cdots a_ge_g$ with $a_2>\cdots>a_g>0$ (so regular and dominant for $M$) then to make $w(\lambda)$ dominant for $G$ we choose $w\in W_G$ such that $e_g\mapsto e_{g-1}\mapsto\cdots\mapsto e_2\mapsto e_1\mapsto e_g$. Then $w(\lambda)=a_2e_1+\cdots+a_ge_{g-1}$. (We omit terms in $e_0$, as if we were dealing with $\Sp_g$.) Now $\Phi_N=\{e_1-e_j: 1<j\leq g\}\cup\{e_1+e_j:1\leq j\leq g\}$. All of the $w(e_1+e_j)$ are in $\Phi_G^+$, but none of the $w(e_1-e_j)$, so $\ell(w')=g$. Subtracting this from $q_G=\frac{g(g+1)}{2}$ gives $\frac{(g-1)g}{2}$, which is $q_0(M)$.

{\bf Example 2.} Let $G=\Sp_g$, $\alpha=2e_g$. Then $M\simeq\GL_g$, $\Phi_M^+=\{e_i-e_j:1\leq i<j\leq g\}$ and $\Phi_N=\{e_i+e_j:1\leq i\leq j\leq g\}$. Purely for notational convenience, suppose that $g$ is even. If $\lambda=a_1(e_1-e_g)+a_2(e_2-e_{g-1})+\ldots +a_{g/2}(e_{g/2}-e_{(g/2)+1})$,
with $a_1>a_2>\ldots>a_{g/2}>0$,
(this is regular, self-dual and dominant for $M$),
then we choose $w\in W_G$ such that $(e_1,-e_g, e_2,-e_{g-1},\ldots,e_{g/2},-e_{(g/2)+1})\mapsto(e_1, e_2,\ldots,e_g)$. One checks that $w(e_i+e_j)$, with $i\leq j$, is in $\Phi_G^+$ precisely for
$1\leq i\leq j\leq \frac{g}{2}$ and for $1\leq i\leq \frac{g}{2}$ with $\frac{g}{2}+1\leq j\leq g+1-i$. Hence $\ell(w')=\frac{g(g+2)}{8}+\frac{g(g+2)}{8}=\frac{g(g+2)}{4}$. Subtracting this from $q_G=\frac{g(g+1)}{2}$ gives $\frac{g^2}{4}$, which is $q_0(M)$.

\section{The Bloch-Kato Conjecture.}
Given a congruence mod $\q$ as in Conjecture \ref{main}, our goal in this section is to explain how the existence of a non-zero element in $H^1_{\Sigma}(\Q,T_{i,\q}^*\otimes (E_{\q}/O_{\q}))$ (cf. Conjecture \ref{BK}) might follow. Thus the Bloch-Kato conjecture motivates a belief in Conjecture \ref{main}, via the support the former would receive from the latter.

Let $\theta_{\mu}:\hat{G}\rightarrow\GL_n$, for some $n$ (and some $\mu$), be the rational representation of highest weight $\mu$. This is a morphism of algebraic groups defined over $\QQ$. If $\lambda+s\tilde{\alpha}\in X^*(T)$ then, according to \cite[Conjecture 3.2.1]{BG}, there should exist a representation $\rho_{\tilde{\Pi}}:\Gal(\Qbar/\QQ)\rightarrow \hat{G}(E_{\q})$, such that if $p\notin\Sigma\cup\{q\}$ then
$\rho_{\tilde{\Pi}}(\Frob_p^{-1})$ is conjugate in $\hat{G}(E_{\q})$ to the Satake parameter $t(\chi(\tilde{\pi}_p))$. Letting $\omega:\Gal(\Qbar/\QQ)\rightarrow \ZZ_q^{\times}$ be the cyclotomic character, and $c(\mu):=\langle w(\lambda+s\tilde{\alpha}),\mu\rangle$, let $\rho_{\mu,\tilde{\Pi}}:=\omega^{-c(\mu)}\theta_{\mu}\circ\rho_{\tilde{\Pi}}:\Gal(\Qbar/\QQ)\rightarrow\GL_n(E_{\q})$.
Then for $p\notin\Sigma\cup\{q\}$, $\Tr(\rho_{\mu,\tilde{\Pi}}(\Frob_p^{-1}))=p^{c(\mu)}\Tr(\theta_{\mu}(t(\chi(\tilde{\pi}_p))))$, and $\rho_{\mu,\tilde{\Pi}}(\Gal(\Qbar/\QQ))\subseteq \theta_{\mu}(\hat{G}(E_{\q}))$. Even if $\lambda+s\tilde{\alpha}\notin X^*(T)$, $\mu$ should induce a functorial lift of $\tilde{\Pi}$ to $\Pi'$ say, on $\GL_n(\A)$, and there should be a Galois representation associated to $\Pi'\otimes|\det|^{c(\mu)}$, which we also call $\rho_{\mu,\tilde{\Pi}}:\Gal(\Qbar/\QQ)\rightarrow\GL_n(E_{\q})$, with again $\Tr(\rho_{\mu,\tilde{\Pi}}(\Frob_p^{-1}))=p^{c(\mu)}\Tr(\theta_{\mu}(t(\chi(\tilde{\pi}_p))))$. It seems reasonable to suppose that again we will have $\rho_{\mu,\tilde{\Pi}}(\Gal(\Qbar/\QQ))\subseteq \theta_{\mu}(\hat{G}(E_{\q}))$.

Since $\Gal(\Qbar/\QQ)$ is compact, there must be an invariant $O_{\q}$-lattice in $E_{\q}^n$, so conjugating by some element of $\GL_n(E_{\q})$, we may suppose that $\rho_{\mu,\tilde{\Pi}}(\Gal(\Qbar/\QQ))\subseteq \GL_n(O_{\q})$. We may adjust $\theta_{\mu}$ (now defined over $E_{\q}$) to preserve the property $\rho_{\mu,\tilde{\Pi}}(\Gal(\Qbar/\QQ))\subseteq \theta_{\mu}(\hat{G}(E_{\q}))$. In fact there is an integral structure on $\hat{G}$ such that $\hat{G}(O_{\q})=\theta_{\mu}^{-1}(\GL_n(O_{\q}))$, so now $\rho_{\mu,\tilde{\Pi}}(\Gal(\Qbar/\QQ))\subseteq \theta_{\mu}(\hat{G}(O_{\q}))$. (For simplicity, let's imagine that this integral structure is the Chevalley group scheme.) Identifying $\hat{G}$ with $\theta_{\mu}(\hat{G})$, we have $\rho_{\mu,\tilde{\Pi}}:\Gal(\Qbar/\QQ)\rightarrow \hat{G}(O_{\q})$, and its reduction $\rhobar_{\mu,\tilde{\Pi}}:\Gal(\Qbar/\QQ)\rightarrow \hat{G}(\FF_{\q})$.

Similarly, identifying $\hat{M}$ with $\theta_{\mu}(\hat{M})$ (same $\theta_{\mu}$ as above, though its restriction to $\hat{M}$ might not be irreducible), we get $\rho_{\mu,\Pi\otimes|s\tilde{\alpha}|}:\Gal(\Qbar/\QQ)\rightarrow \hat{M}(O_{\q})$, and its reduction $\rhobar_{\mu,\Pi\otimes|s\tilde{\alpha}|}:\Gal(\Qbar/\QQ)\rightarrow \hat{M}(\FF_{\q})$, with $\Tr(\rho_{\mu,\Pi\otimes|s\tilde{\alpha}|}(\Frob_p^{-1}))=p^{c(\mu)}\Tr(\theta_{\mu}(t(\chi_p+s\tilde{\alpha})))$ ($\chi_p$ as in \S 2).
We shall impose a condition that the image $\rhobar_{\mu,\Pi\otimes|s\tilde{\alpha}|}(\Gal(\Qbar/\QQ))$ is not contained in any proper parabolic subgroup of $\hat{M}(\FF_{\q})$.

Applying Conjecture \ref{main} to $T_{\mu}$ if $\mu$ is minuscule, or more generally to the sum of $T_{\mu}$ and a suitable integral linear combination of $T_{\mu'}$ for $\mu'<\mu$ (see \cite[(3.12)]{Gr}), it predicts that, for all $p\notin\Sigma\cup\{q\}$,
$$\Tr(\rhobar_{\mu,\Pi\otimes|s\tilde{\alpha}|}(\Frob_p^{-1}))=\Tr(\rhobar_{\mu,\tilde{\Pi}}(\Frob_p^{-1})) \text{ in $\FF_{\q}$}.$$
It seems reasonable to suppose that a consequence of this is that (for $q$ sufficiently large), after conjugating $\rho_{\mu,\tilde{\Pi}}$ by something in $\hat{G}(O_{\q})$ before we reduce, we can get $\rhobar_{\mu,\tilde{\Pi}}(\Gal(\Qbar/\QQ))\subseteq \hat{P}_{\op}(\FF_q)$, with the projection of $\rhobar_{\mu,\tilde{\Pi}}$, from $\hat{P}_{\op}(\FF_q)$ to its Levi subgroup $\hat{M}(\FF_{\q})$, equal to $\rhobar_{\mu,\Pi\otimes|s\tilde{\alpha}|}$. (Here, $\hat{P}_{\op}$ is the parabolic subgroup opposite to $\hat{P}$, with unipotent radical $\hat{N}_{\op}$ such that $\Phi_{\hat{N}_{\op}}=-\Phi_{\hat{N}}$.) This is easy to prove in the case $G=\GL_{n}, M=\GL_{n/2}\times\GL_{n/2}$, with $n$ even, $\theta_{\mu}$ the identity and $q>2n$, using the Brauer-Nesbitt theorem. In this case our condition on $\rhobar_{\mu,\Pi\otimes|s\tilde{\alpha}|}(\Gal(\Qbar/\QQ))$ amounts to the irreducibility of the two representations to $\GL_{n/2}(\FF_{\q})$, which are therefore the composition factors of $\rhobar_{\mu,\tilde{\Pi}}$.

From now on, we abbreviate $\rho_{\mu,\tilde{\Pi}}$, $\rhobar_{\mu,\tilde{\Pi}}$, $\rho_{\mu,\Pi\otimes|s\tilde{\alpha}|}$ and $\rhobar_{\mu,\Pi\otimes|s\tilde{\alpha}|}$ to $\rhot$, $\rhobart$, $\rho$ and $\rhobar$, respectively. For $j\geq 1$, let $\hat{N}_{\op}(\FF_{\q})^{(j)}$ be the $j^{\mathrm{th}}$ derived subgroup, so $\hat{N}_{\op}(\FF_{\q})/\hat{N}_{\op}(\FF_{\q})^{(1)}$ is the abelianisation. Define
$$C:\Gal(\Qbar/\QQ)\rightarrow \hat{N}_{\op}(\FF_{\q})/\hat{N}_{\op}(\FF_{\q})^{(1)}
\text{ by }
C(g)=[\rhobart(g)\rhobar(g)^{-1}],$$
where $[\cdot]$ denotes the class in the quotient group. (That $\rhobart(g)\rhobar(g)^{-1}\in\hat{N}_{\op}(\FF_{\q})$ follows from the supposition in the previous paragraph.) We have that
$$C(gh)=\rhobart(gh)\rhobar(gh)^{-1}=\rhobart(g)\rhobar(g)^{-1}\rhobar(g)\rhobart(h)\rhobar(h)^{-1}\rhobar(g)^{-1}=C(g)\ad(\rhobar(g))(C(h)).$$
Therefore $C$ is a cocycle, representing a Galois cohomology class
denoted $C^{(0)}$ in $H^1(\QQ,\hat{N}_{\op}(\FF_{\q})/\hat{N}_{\op}(\FF_{\q})^{(1)})$, where the action of $\Gal(\Qbar/\QQ)$ on $\hat{N}_{\op}(\FF_{\q})/\hat{N}_{\op}(\FF_{\q})^{(1)}$ is via $\ad(\rhobar)$.

We would like this class to be non-zero. It might not be, but in the case $G=\GL_{n}, M=\GL_{n/2}\times\GL_{n/2}$, a trivial modification of an argument of Ribet (the case $n=2$, \cite[Proposition 2.1]{Ri}) produces a class that is non-zero, assuming the irreducibility of $\rhot$.
Henceforth, in place of this irreducibility condition, we assume that $\rhot(\Gal(\Qbar/\QQ))$ is not contained in any proper parabolic subgroup of $\hat{G}(E_{\q})$. This ought to be true except in cases where $\tilde{\Pi}$ is some kind of endoscopic lift. In general to get a non-zero class, especially when $m>1$ (in $r=\oplus_{i=1}^m r_i$), it seems to be necessary to use a somewhat different argument, such as the following.

Suppose that $C^{(0)}=0$ (since if $C^{(0)}\neq 0$ then we already have what we want). Then $C$ is a coboundary, so for some $n\in\hat{N}_{\op}(\FF_{\q})$, and every $g\in\Gal(\Qbar/\QQ)$, $\rhobart(g)\rhobar(g)^{-1}=[\rhobar(g)n\rhobar(g)^{-1}n^{-1}]$, so
$$n\rhobart(g)n^{-1}\rhobar(g)^{-1}=[n(\rhobar(g)n\rhobar(g)^{-1})n^{-1}(\rhobar(g)n\rhobar(g)^{-1})^{-1}]=[1].$$
Now $n\rhobart(g)n^{-1}\in\hat{P}_{\op}(\FF_{\q})$ and has the same projection to $\hat{M}(\FF_{\q})$ as $\rhobart(g)$. The condition that this projection is $\rhobar(g)$ only determines $\rhobart(g)$ up to conjugation by $\hat{N}_{\op}(\FF_{\q})$, and $n\rhobart(g)n^{-1}$ is an equally good choice. Making this the new $\rhobart(g)$, $\rhobart(g)\rhobar(g)^{-1}$ now takes values in $\hat{N}_{\op}(\FF_{\q})^{(1)}$, so we may start again and use it to define a cocycle representing a cohomology class $C^{(1)}\in H^1(\QQ,\hat{N}_{\op}(\FF_{\q})^{(1)}/\hat{N}_{\op}(\FF_{\q})^{(2)})$.
If this is $0$, similarly we get $C^{(2)}\in H^1(\QQ,\hat{N}_{\op}(\FF_{\q})^{(2)}/\hat{N}_{\op}(\FF_{\q})^{(3)})$, etc.

Considering root subgroups, it is easy to see that
$$\oplus_{j\geq 0}(\hat{N}_{\op}(\FF_{\q})^{(j)}/\hat{N}_{\op}(\FF_{\q})^{(j+1)}) \simeq \oplus_{i\geq 1}\hat{\n}_{\op,i},$$
where $\hat{\n}_{\op,i}$ is defined in the same manner as $\hat{\n}_i$ in \S 3. If some $C^{(j)}\neq 0$, we have therefore, for some $i$, a non-zero element of $H^1(\QQ, \hat{\n}_{\op,i})$ (with the adjoint action of $\rhobar$), which is our interim goal.

If all the $C^{(j)}=0$, for all $j\geq 0$, then (after conjugation in $\hat{N}_{\op}(\FF_{\q})$) $\rhobart$ takes values in $\hat{M}(\FF_{\q})$.
(Of course, $\hat{N}_{\op}(\FF_{\q})^{(j)}=\{1\}$ eventually.) Equivalently, after conjugation in $\hat{N}_{\op}(O_{\q})$, $\rhot$ takes values in $\hat{M}_1$, where for $k\geq 1$ we define
$$\hat{M}_k:=\{h\in\hat{G}(O_{\q}): h\in \hat{P}_{\op}\pmod{\q} \text{ and }h\in \hat{P}\pmod{\q^k}\}.$$
Let also $\hat{N}_{\op,k}:=\{h\in\hat{M}_k: h\pmod{\q}=1\}$, so, (after conjugation in $\hat{N}_{\op}(\FF_{\q})$), $\rhot(g)\rho(g)^{-1}\in \hat{N}_{\op,1}$, for all $g\in\Gal(\Qbar/\QQ)$.

Now we may play the same game, substituting $\hat{N}_{\op,1}/\hat{N}_{\op,2}$ for $\hat{N}_{\op}(\FF_{\q})$. When $N$ is not abelian, it might not be the case that $\hat{N}_{\op,k}/\hat{N}_{\op,k+1}\simeq \hat{N}_{\op}(\FF_{\q})$. But using the fact that $\hat{N}_{\op,k}/\hat{N}_{\op,k+1}$ is generated by the images of elements of the form $\exp(q^kn'+qm+qn)$, with $n'$,$m$ and $n$ integral elements of $\hat{\n}_{\op}$, $\hat{\mm}$ and $\hat{\n}$ respectively, with respect to a Chevalley basis, one sees that it shares with $\hat{N}_{\op}(\FF_{\q})$ the property
$$\oplus_{j\geq 0}(\hat{N}_{\op,k}/\hat{N}_{\op,k+1})^{(j)}/(\hat{N}_{\op,k}/\hat{N}_{\op,k+1})^{(j+1)}\simeq \oplus_{i\geq 1}\hat{\n}_{\op,i}.$$
Starting with $k=1$ if we are unable to produce a non-zero class using $\hat{N}_{\op,k}/\hat{N}_{\op,k+1}$ in place of $\hat{N}_{\op}(\FF_{\q})$ then, after conjugation in $\hat{N}_{\op}(O_{\q})$, $\rhot$ takes values in $\hat{M}_{k+1}$. If this fails for all $k\geq 1$ then, after conjugation in $\hat{N}_{\op}(O_{\q})$, $\rhot$ takes values in $\cap_{k\geq 1}\hat{M}_{k}=\hat{P}(O_{\q}),$ contrary to our ``irreducibility'' hypothesis.

Thus we must get, for some $i$, a non-zero element of $H^1(\QQ, \hat{\n}_{\op,i})$. The Killing form $(n',n)\mapsto\tr(\ad(n')\ad(n))$ puts $\hat{\n}_{\op,i}$ and $\hat{\n}_i$ in perfect duality (with $\Phi_N^i$ and $-\Phi_N^i$ as dual bases). It respects the adjoint action of $\hat{M}$, showing that $\hat{\n}_{\op,i}$ and $\hat{\n}_i$ are dual as representations of $\hat{M}$. It follows that $\hat{\n}_{\op,i}$ injects into $T_{i,\q}^*\otimes (E_{\q}/O_{\q})$ (as its $\q$-torsion subgroup), at least for some choice of lattice $T_{i,\q}$, but see the remark at the end of \S 4. Thus we get a non-zero class in $H^1(\Q,T_{i,\q}^*\otimes (E_{\q}/O_{\q}))$, as long as $H^0(\Q,\hat{\n}_{\op,i})$ is trivial. This class satisfies the Bloch-Kato local condition at any prime $p\notin\Sigma\cup\{q\}$, since $\rhot$ is unramified at such a prime. We should expect it also to satisfy the local condition at $q$, given that $\tilde\Pi_q$ is unramified and all the $\theta_{\mu}\circ\rhot$ ought to be crystalline at $q$. Thus we expect to have a non-zero element of
$H^1_{\Sigma}(\Q,T_{i,\q}^*\otimes (E_{\q}/O_{\q}))$, which would fit with
$$\ord_{\q}\left(\frac{L_{\Sigma}(1+is,\Pi,r_i)}{\Omega}\right)>0$$
and the Bloch-Kato conjecture, {\em if it were the same $i$}, but we might have no way of knowing that when $m>1$.

Note that in the above ``construction'', we could switch the roles of $P$ and $P_{\op}$, thus producing a non-zero class in $H^1_{\Sigma}(\Q,T_{i',\q}\otimes (E_{\q}/O_{\q}))$ instead (for some $i'$). This seems like a problem, since in general it will not be the case that also
$$\ord_{\q}\left(\frac{L_{\Sigma}(1-i's,\Pi,r_{i'})}{\Omega'}\right)>0$$ which is what the Bloch-Kato conjecture would suggest if this is a critical value. Now $1-i's$ is paired with $i's$, which differs in parity from the original $1+i's$, so a way out would be if it is always the case that for some $i'$ there is a parity condition stopping $L_{\Sigma}(1-i's,\Pi,r_{i'})$ from being a critical value. For a non-critical value, $L(1-i's,\Pi,r_{i'})$ should be $0$, and (if not near-central) its order of vanishing should be the rank of a $\q$-adic Selmer group. This is an analogue of the rank part of the Birch and Swinnerton-Dyer conjecture, see the ``conjectures'' $C_r(M)$ and $C_{\lambda}^i(M)$ in \cite[\S 1,\S6.5]{Fo}. Reducing $\pmod{\q}$, one would expect something non-zero in $H^1_{\Sigma}(\Q,T_{i',\q}\otimes (E_{\q}/O_{\q}))$, making it no problem to have constructed such a thing. There will be a parity condition if $\langle\lambda,\check{\gamma}\rangle=0$ for some $\gamma\in\Phi_N^{i'}$, and in the examples we looked at in the earlier sections, this always happens for some $i'$, as one can see by examining the tables. Recalling Lemma \ref{symmetry}, it would suffice to show that there is some $\gamma\in\Phi_N$ such that $w_0^M\gamma=\gamma$.

\end{document}